\documentclass[reqno,a4paper,11pt]{amsart}%{article}%
\usepackage{amssymb,amsmath,amscd,amstext,amsthm,amsfonts}
\usepackage[]{graphicx}
\usepackage{epstopdf}
\usepackage{setspace} 
\usepackage{subfig}
\usepackage{color}
\usepackage[mathscr]{eucal}
\usepackage[draft]{changes}
%\doublespacing
\usepackage[ansinew]{inputenc} 
\usepackage{hyperref}
\usepackage{color}
%\usepackage{standalone}
%\usepackage{refcheck}

%\usepackage[pagebackref]{hyperref}
%\renewcommand*{\backref}[1]{Cited at page [#1]}

%%%%%%%%%%%%%%%%%%%%%%%%%%%%%%%%%%%%%%%%%%%%%%%%%%%%%
%\addtolength{\evensidemargin}{-15mm}
%\addtolength{\oddsidemargin}{-15mm}
%\addtolength{\textwidth}{30mm}
%\addtolength{\textheight}{20mm}
%\addtolength{\topmargin}{-10mm}
%%%%%%%%%%%%%%%%%%%%%%%%%%%%%%%%%%%%%%%%%%%%%%%%%%%%

\numberwithin{equation}{section}
%%%%%%%%%%%%%%%%%%%%%%%%%%%%%%%%%%%%%%%%%%%%%%%%%%%%%
%\newtheorem{thm}{Theorem}[section]
%\newtheorem{lem}[thm]{Lemma}%[section]
%\newtheorem{prop}[thm]{Proposition}%[section]
%\newtheorem{cor}[thm]{Corollary}%[section]
%%\newtheorem{Jac}{Jacobi}

%%%%%%%%%%%%%%%%%%%%%%%%%%%%%%%%%%%%%%%%%%%%%%%%%%%%%
\newtheorem{theorem}{Theorem}[section]
\newtheorem{proposition}[theorem]{Proposition}

\newtheorem{corollary}[theorem]{Corollary}
\newtheorem{lemma}[theorem]{Lemma}

{\theoremstyle{definition}%{plain}
{%\theorembodyfont{\normalfont\rmfamily}
%\newthbeorem{definition}[theorem]{Definition}
\newtheorem{remark}[theorem]{Remark}

\newtheorem{defn}[theorem]{Definition}
}}

%%%%%%%%%%%%%%%%%%%%%%%%%%%%%%%%%%%%%%%%%%%%%%%%%%%%

\newcommand{\cal}{\mathcal}

\newcommand{\BB}{{\cal B}}

\newcommand{\EE}{{\cal E}}
\newcommand{\FF}{{\cal F}}
\newcommand{\GG}{{\cal G}}
\newcommand{\HH}{{\cal H}}

\newcommand{\JJ}{{\cal J}}

\newcommand{\MM}{{\cal M}}

\newcommand{\OO}{{\cal O}}

\newcommand{\RR}{{\cal R}}
\newcommand{\SSS}{{\cal S}}

\newcommand{\XX}{{\cal X}}
\newcommand{\YY}{{\cal Y}}

\newcommand{\Aa}{{\mathbb{A}}}

\newcommand{\Cc}{{\mathbb{C}}}

\newcommand{\Ii}{{\mathbb{I}}}

\newcommand{\Nn}{{\mathbb{N}}}

\newcommand{\Rr}{{\mathbb{R}}}

\newcommand{\Tt}{{\mathbb{T}}}

\newcommand{\Vv}{{\mathbb{V}}}

\newcommand{\Zz}{{\mathbb{Z}}}

%%%%%%%%%%%%%%%%%%%%%%%%%%%%%%%%%%%%%%%%%%%%%

%\def\i{\mathrm{i}}

\def\dist{\operatorname{dist}}

\def\Mat{\operatorname{Mat}}
\def\GL{\operatorname{GL}}

\def\S{\operatorname{S{}}}
\def\Sp{\operatorname{Sp}}

\def\supp{\operatorname{supp}}

\def\Per{\operatorname{Per}}

%%%%% Pedro's Macro

%\def\trans{\,^\top\!}

%%%%%%%%%%%%%%%%%%%%%%%%%%%%%%%%%%%%%%%%%%%%%%%%%%%%%%%%%%%%%%%%%%%%%

%\newcommand{\pde}[2]{\frac{\partial #1}{\partial #2}}

%\newcommand{\spec}[1]{\operatorname{spec}\left( #1\right)}
%\newcommand{\tr} {\operatorname{tr}}
%\newcommand{\diag} {\operatorname{diag}}

\newcommand{\id}  {\operatorname{id}}

\newcommand{\Int} {\operatorname{int}}

\newcommand{\Sym} {\operatorname{Sym}}

\newcommand{\te}[1]{\quad\text{#1}\quad}

\newcommand{\emb}[1]{\BB^{#1}}

\newcommand{\ff}{\mathbb{II}}

\newcommand{\bepsilon}{\boldsymbol{\varepsilon}}

\begin{document}
\title[Generic billiards on bodies]{Billiards in generic convex bodies have positive topological entropy}

\date{\today}

\author[Bessa]{M\'{a}rio Bessa}
\address{Universidade Aberta, Departamento de Ci\^encias e Tecnologia
\\
 Rua do Amial 752, 
 4200-055 Porto, Portugal}
\email{mario.costa@uab.pt}

\author[Del Magno]{Gianluigi Del Magno}
\address{Dipartimento di Matematica, Universit\`a di Pisa \\
Largo Bruno Pontecorvo 5, 56127 Pisa, Italy}
\email{gianluigi.delmagno@unipi.it}

\author[Lopes Dias]{Jo\~{a}o Lopes Dias}
\address{Departamento de Matem\'atica, CEMAPRE and REM, ISEG, 
Universidade de Lisboa,
Rua do Quelhas 6, 
1200-781 Lisboa, Portugal}
\email{jldias@iseg.ulisboa.pt}

\author[Gaiv\~ao]{Jos\'e Pedro Gaiv\~ao}
\address{Departamento de Matem\'atica, CEMAPRE and REM, ISEG\\
Universidade de Lisboa\\
Rua do Quelhas 6, 1200-781 Lisboa, Portugal}
\email{jpgaivao@iseg.ulisboa.pt}

\author[Torres]{Maria Joana Torres}
\address{CMAT and Departamento de Matem\'atica, 
Universidade do Minho, 
Campus de Gualtar, 
4700-057 Braga, Portugal}
\email{jtorres@math.uminho.pt}

\begin{abstract}
We show that there exists a $C^2$ open dense set of convex bodies with smooth boundary whose billiard map exhibits a non-trivial hyperbolic basic set. As a consequence billiards in generic convex bodies have positive topological entropy and exponential growth of the number of periodic orbits.
\end{abstract}

\maketitle
%\tableofcontents

%%%%%%%%%%%%%%%%%%%%%%%%%%%%%%%%%%%%%%
\section{Introduction}\label{sec:introduction}

We are interested in the billiard problem inside convex bodies in $\Rr^{d+1}$, $d\geq1$, with a smooth boundary.
The unit speed flow dynamics of the pointsized billiard ball is reduced to the billiard map. This map relates consecutive elastic reflections at the boundary of the body.
In this work, we present a way to perturb the body so that the billiard dynamics becomes chaotic on a proper subset of the phase space.

We naturally associate a convex body to its boundary, which is the embedding of a $d$-sphere $S$. Thus we talk about the topology on bodies as the one on embeddings $S\hookrightarrow\Rr^{d+1}$.
Moreover, we say that a convex body with a $C^\infty$ boundary is a smooth convex body.

Given a body whose billiard map is a diffeomorphism $f$,  an $f$-invariant and compact subset $\Lambda$ of the phase space is
\emph{hyperbolic} if there is a  $Df$-invariant continuous splitting of the tangent bundle restricted to $\Lambda$, $T_\Lambda M=E_\Lambda\oplus F_\Lambda$,
and $m\in{\mathbb{N}}$ such that for all  $x\in{\Lambda}$ the
following inequalities hold:
\begin{equation}\label{uh}
\|Df^{m}(x)|E_{x}\|\leq{\frac{1}{2}}
\te{and}
\|Df^{-m}(x)|F_{x}\|\leq{\frac{1}{2}}.
\end{equation}

A nontrivial hyperbolic basic set $\Lambda$ is a hyperbolic, infinite, transitive (contains a dense orbit) and locally maximal set, i.e. there is an open neighborhood $V$ of $\Lambda$ such that 
$$
\Lambda=\bigcap_{n\in\Zz}f^n(\bar V)
$$
(cf.~\cite{Katok}).
Such a set contains a transverse homoclinic point.

Our main result, proved in section~\ref{sec: proof}, is the following.

\begin{theorem}\label{main thm}
There is a $C^2$-open and dense set of smooth convex bodies 
whose billiard maps have a nontrivial hyperbolic basic set.
\end{theorem}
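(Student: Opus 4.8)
I would reduce Theorem~\ref{main thm} to a statement about a single periodic orbit and then invoke two classical machines. By the Smale--Birkhoff theorem, if the billiard map $f$ of some body has a hyperbolic periodic orbit whose stable and unstable manifolds meet transversally off the orbit, then a power of $f$ has a Smale horseshoe, which is exactly a nontrivial hyperbolic basic set in the sense of the definition above; and a horseshoe persists, together with a conjugacy, under $C^1$-small perturbations of $f$, hence under $C^2$-small perturbations of the body. Thus the set $\mathcal G$ of smooth convex bodies whose billiard has a transverse homoclinic orbit is $C^2$-open, and it remains to prove that $\mathcal G$ is $C^2$-dense.

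\textbf{A hyperbolic $2$-periodic orbit.} Given a smooth convex body, take a double normal $[p_1,p_2]$ (a chord meeting $\partial\Omega$ orthogonally at both ends --- e.g.\ the one realising the diameter); after a preliminary $C^2$-small perturbation it may be assumed non-degenerate, so it persists as a $2$-periodic billiard orbit $\mathcal O$ under further small perturbations. The derivative $Df^2$ along $\mathcal O$ is a symplectic map of $\Rr^{2d}$; after a $C^2$-small perturbation making $\partial\Omega$ near $p_i$ a quadric cap with principal directions aligned at $p_1$ and $p_2$, it becomes block-diagonal with $\SL(2,\Rr)$ blocks, the $j$-th of trace $\tau_j=2+\tfrac{4\ell}{\rho_1^{(j)}\rho_2^{(j)}}(\ell-\rho_1^{(j)}-\rho_2^{(j)})$, where $\ell=|p_1p_2|$ and $\rho_i^{(j)}$ are the principal radii of curvature at $p_i$. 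The crucial input is a perturbation lemma asserting that $C^2$-small deformations of the body move $Df^2$ along $\mathcal O$ onto a full neighbourhood of itself in $\Sp(2d,\Rr)$ (proved by varying the cap shapes and, via a small affine dilation, the length $\ell$). Granting it, $Df^2$ can be put in general position: either $\mathcal O$ is hyperbolic, or it is elliptic (possibly partially) with no eigenvalue a root of unity, and then a further $C^2$-small perturbation enforces the Birkhoff-normal-form non-degeneracy (twist) conditions at $\mathcal O$.

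\textbf{A transverse homoclinic orbit.} If $\mathcal O$ is hyperbolic, a topological argument --- exactness of the billiard map and, for $d=1$, its monotone-twist property --- produces a point of $W^s(\mathcal O)\cap W^u(\mathcal O)$ other than $\mathcal O$, and one more $C^2$-small perturbation, controlled by the same submersion statement, realises a Melnikov-type splitting and makes the intersection transverse. If instead $\mathcal O$ is non-degenerate elliptic, one invokes the theory of generic elliptic periodic points of symplectic maps (Zehnder's theorem for $d=1$, its higher-dimensional analogues for $d\ge2$): such an orbit is accumulated by hyperbolic periodic orbits carrying transverse homoclinic points, the required genericity again being produced by $C^2$-small body deformations. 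Either way one lands in $\mathcal G$ arbitrarily $C^2$-close to the starting body, which proves density and hence the theorem.

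\textbf{Main obstacle.} Every step rests on the same non-trivial ingredient, which I expect to be the hard part: a quantitative understanding of how the billiard map --- and its $1$-jet and higher jets along a double normal --- varies with the body, sharp enough to show that $C^2$-small deformations span enough directions to put $Df^2$ in general position in $\Sp(2d,\Rr)$, to split a homoclinic tangency, and to realise the Birkhoff non-degeneracy used in the elliptic alternative. For $d\ge2$ there is the additional difficulty that the dynamics near an elliptic periodic orbit of a symplectic diffeomorphism is far subtler than in dimension two, so the elliptic case genuinely requires the higher-dimensional generic-elliptic-point theory rather than the classical planar one.
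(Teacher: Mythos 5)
Your plan contains two genuine gaps, and they are exactly the two places where the paper's actual argument lives.

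\textbf{The hyperbolic alternative.} You assert that if the double-normal orbit $\mathcal O$ is hyperbolic, then ``a topological argument --- exactness of the billiard map and, for $d=1$, its monotone-twist property --- produces a point of $W^s(\mathcal O)\cap W^u(\mathcal O)$ other than $\mathcal O$''. For $d\ge 2$ no such argument is given, and none is known: a single hyperbolic periodic orbit of a symplectic map in dimension $\ge 4$ need not have any homoclinic points, and the Birkhoff/exactness argument you gesture at is genuinely two-dimensional. The paper does \emph{not} try to extract a homoclinic orbit from one hyperbolic periodic orbit. Instead, in the case where \emph{all} periodic orbits with period $\ge m_d$ are hyperbolic and $C^2$-stably so, it uses its Franks' lemma for billiards (Theorem~\ref{Franks}) together with Contreras' result on stably hyperbolic families of periodic symplectic cocycles \cite[Theorem~8.1]{contreras-am-2010} to conclude that the \emph{closure of the whole set of such periodic points} is a uniformly hyperbolic set, and then applies spectral decomposition to extract a nontrivial basic set (Theorem~\ref{thm: hyp}, Theorem~\ref{thm: lambda uf}). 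This is a different mechanism from ``find a homoclinic point of $\mathcal O$'', and it is the one that actually works in dimension $\ge 2$.

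\textbf{The elliptic alternative.} You invoke ``Zehnder's theorem for $d=1$, its higher-dimensional analogues for $d\ge2$'' as a black box producing transverse homoclinic points near a nondegenerate elliptic orbit. There is no ready-made higher-dimensional analogue: establishing this for multidimensional billiards is precisely the content of Sections~\ref{sec: KT} and~\ref{sec: ellipt}. The paper has to (i) prove a Klingenberg--Takens theorem for billiards (Theorem~\ref{thm: K-T}) so that generically the $3$-jet at periodic points of period $\ge m_d$ is 4-elementary and weakly monotonous; (ii) reduce a $q$-elliptic point to a nearby $1$-elliptic one via Birkhoff normal form and Contreras--Arnaud--Herman (Theorem~\ref{th:contreras}); (iii) on the resulting $2$-dimensional centre manifold, use Angenent's criterion and Katok's theorem, plus a Donnay-type perturbation (Theorem~\ref{thm: Donnay}) to split any residual heteroclinic chain with rational rotation number. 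Your outline collapses this entire construction into one citation, but no such citation exists; this is where the nontrivial work and the period bound $m_d=4\binom{2d+3}{4}$ come from.

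A secondary issue: restricting attention to a period-$2$ orbit is incompatible with the paper's dichotomy, which is phrased in terms of periodic orbits of period $\ge m_d$ (this lower bound is forced by the jet-perturbation lemma). The correct dichotomy, modulo a generic set, is: either there exists an elliptic orbit of period $\ge m_d$ (Theorem~\ref{thm: ell}), or all such orbits are hyperbolic and stably so (Theorem~\ref{thm: hyp}); and these two cases are handled by completely different machinery, neither of which reduces to the analysis of a single diameter orbit. The openness part of your plan and the observation that the ``hard part'' is a submersion/Franks-type perturbation lemma are correct and match the paper, but the density argument as outlined does not go through.
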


The complexity of the billiard dynamics can be
measured by the topological entropy, a numerical invariant which we now define. 
Let $d_n(x,y)=\max \{\dist(f^i(x),f^i(y))\colon 0\leq i <n-1\}$ where $f$ is the billiard map and $\dist$ is the distance induced by the embedding corresponding to the smooth convex body. 
A subset $F$ of the phase space is said to be $(n,\epsilon)$-spanning if the whole phase space is covered by the union of the dynamical balls $\{y\colon d_n(x,y)<\epsilon\}$ centered at the points $x\in F$. 
Denote by $N(n,\epsilon)$ the minimal cardinality of a $(n,\epsilon)$-spanning set.
Roughly, this gives the number of orbit segments that one can distinguish up to some precision. 
The topological entropy is then the exponential growth rate of this number as the precision increases,
$$
h_{top}(f)=\underset{\epsilon\rightarrow 0}{\lim}\left(\underset{n\rightarrow \infty}{\limsup}\frac{1}{n}\log N(n,\epsilon)\right).
$$

Since a nontrivial hyperbolic basic set $\Lambda$ contains a transverse homoclinic point, the topological entropy is positive.
Recall also that the topological entropy gives the exponential growth of the number of periodic orbits in $\Lambda$ (cf. ~\cite[Theorem 18.5.1]{Katok}).
Therefore, we have the following consequence of Theorem~\ref{main thm}.

\begin{corollary}
There is a $C^2$-open and dense set of smooth convex bodies whose billiard maps satisfy
$$
\limsup_{n\to\infty}\frac1n\log P_n=h_{top}(f)>0,
$$
where $P_n$ is the number of periodic orbits with period $n$.
\end{corollary}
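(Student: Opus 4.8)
The plan is to deduce the corollary from Theorem~\ref{main thm} together with classical facts about uniformly hyperbolic sets. First I would fix a smooth convex body in the $C^2$-open dense set provided by Theorem~\ref{main thm} and let $f\colon M\to M$ be its billiard map, a $C^\infty$ diffeomorphism of the compact phase space $M$; by that theorem $f$ carries a nontrivial hyperbolic basic set $\Lambda$. As recalled in the text preceding the corollary, $\Lambda$ contains a transverse homoclinic point, so by the Smale--Birkhoff homoclinic theorem $f$ has a horseshoe contained in $\Lambda$; a horseshoe has positive topological entropy, and entropy is monotone under restriction to invariant subsets, so $h_{top}(f)\ge h_{top}(f|_\Lambda)>0$. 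This settles the strict positivity asserted in the statement.

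Next I would count periodic orbits. Let $P_n^{\Lambda}$ denote the number of periodic orbits of period $n$ contained in $\Lambda$. Since $\Lambda$ is a transitive, locally maximal hyperbolic set, \cite[Theorem 18.5.1]{Katok} gives $\limsup_{n\to\infty}\tfrac1n\log P_n^{\Lambda}=h_{top}(f|_\Lambda)$; passing between periodic points and periodic orbits of exact period $n$ changes the count by a factor at most $n$ and so does not affect the exponential rate. Since every orbit in $\Lambda$ is a periodic orbit of $f$, one has $P_n\ge P_n^{\Lambda}$ and hence $\limsup_n\tfrac1n\log P_n\ge h_{top}(f|_\Lambda)>0$: the number of periodic orbits grows exponentially.

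To promote this inequality to the equality $\limsup_n\tfrac1n\log P_n=h_{top}(f)$ I would establish two matching bounds. For the upper bound $\limsup_n\tfrac1n\log P_n\le h_{top}(f)$ I would use the $C^\infty$ smoothness of $f$ through the volume-growth estimates of Yomdin and Gromov applied to $\mathrm{graph}(f^n)\subset M\times M$: the exponential growth rate of $\Vol(\mathrm{graph}(f^n))$ is at most $h_{top}(f)$, and, for bodies whose period-$n$ points are isolated, this volume dominates the number of transverse intersections of $\mathrm{graph}(f^n)$ with the diagonal. For the reverse bound $h_{top}(f)\le\limsup_n\tfrac1n\log P_n$, in the planar case $d=1$ the phase space $M$ is a surface and Katok's theorem for $C^{1+\alpha}$ surface diffeomorphisms yields horseshoes with entropy arbitrarily close to $h_{top}(f)$, each contributing periodic orbits at its own positive exponential rate.

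The soft part — producing $\Lambda$, hence the positivity of both $h_{top}(f)$ and the periodic-orbit growth rate — comes straight from Theorem~\ref{main thm}, and the Yomdin--Gromov upper bound is routine. The hard part will be the reverse inequality $h_{top}(f)\le\limsup_n\tfrac1n\log P_n$ when $d\ge 2$: for a general $C^\infty$ diffeomorphism in dimension at least $3$ the topological entropy need not be approximable by entropies of horseshoes, so closing this gap would seem to require either exploiting structure specific to convex billiard maps or restricting the displayed equality to the planar case. A secondary technical point is to check that any auxiliary genericity invoked in the upper bound (isolatedness of the period-$n$ orbits) can be arranged while keeping the relevant family of bodies $C^2$-open and dense, and not merely residual.
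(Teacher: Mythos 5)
Your treatment of the ``soft part''---positivity of $h_{top}(f)$ from the transverse homoclinic point, and the exponential growth of periodic orbits in $\Lambda$ via \cite[Theorem 18.5.1]{Katok}---is exactly the paper's proof, which consists only of the two sentences preceding the corollary's statement. The paper does not attempt either direction of the displayed equality $\limsup_n \frac1n \log P_n = h_{top}(f)$ for the unrestricted billiard map; those are precisely the two inequalities you flag as the hard part. What the paper's argument actually delivers is $h_{top}(f) > 0$, $\limsup_n \frac1n \log P_n \ge h_{top}(f|_\Lambda)>0$, and the exact equality for the restriction $f|_\Lambda$.

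Your misgivings are therefore pointing at a genuine gap in the statement as literally written, not a defect of your own proposal. For the upper bound, finiteness of $P_n$ is guaranteed only on a $C^\infty$-residual set (item (3) of Theorem~\ref{thm: generic}), not on a $C^2$-open dense one, and the Yomdin--Gromov argument also requires nondegeneracy of period-$n$ orbits to equate the count with a transverse intersection number; neither condition can be folded into the corollary's open-dense hypothesis as stated. For the lower bound $h_{top}(f) \le \limsup_n \frac1n \log P_n$, you are right that Katok's approximation of entropy by horseshoes is a surface theorem with no known counterpart for $d\ge 2$, and nothing in the paper addresses it. Read as a statement about $P_n^\Lambda$ and $h_{top}(f|_\Lambda)$---which is what the cited Theorem 18.5.1 supplies---the corollary follows immediately from Theorem~\ref{main thm}; as a statement about the full billiard map it outruns what the paper actually proves, and your attempt correctly reveals this.
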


Birkhoff in ~\cite{Birkhoff1927} was already interested in the problem of estimating from below the number of periodic orbits.
The exponential growth that we have obtained improves the best presently known estimate~\cite{karasev2009} (see also~\cite{BPHT2020}), when restricting to generic smooth convex bodies.

The proof of Theorem~\ref{main thm} is split mainly in the next two theorems, which are of independent interest. 
Define the integer
$$
m_d:=4\left(\begin{smallmatrix}2d+3\\4\end{smallmatrix}\right).
$$

\begin{theorem}\label{thm: 1}
There is a $C^\infty$-residual set $\RR$ of smooth convex bodies such that the following holds. If a billiard inside a body in $\RR$ has an elliptic periodic point with period $\geq m_d$, then it has a horseshoe.
\end{theorem}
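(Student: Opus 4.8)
The plan is to combine a purely dynamical statement --- that a sufficiently non-degenerate elliptic periodic point of a symplectomorphism forces a horseshoe in a neighbourhood of its orbit --- with a perturbation lemma for billiards showing that such non-degeneracy can be realised by perturbing the body locally near the orbit, \emph{provided its period is at least} $m_d$; the residual set $\RR$ is then produced by a Baire category argument, as a countable intersection of open dense sets.

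\emph{Dynamical input.} The billiard map $f$ of a smooth convex body in $\Rr^{d+1}$ is an exact symplectomorphism of a $2d$-dimensional phase space. Let $p$ be an elliptic periodic point of period $n$, let $g=f^n$ be the return map at $p$, and let $e^{\pm i\theta_1},\dots,e^{\pm i\theta_d}$ be the eigenvalues of $Dg(p)$. Call $p$ \emph{non-degenerate} if $\langle k,\theta\rangle\notin 2\pi\Zz$ for all integer vectors $k$ with $0<|k|_1\le 4$ --- so that $g$ admits a Birkhoff normal form $H(I)=\langle\omega,I\rangle+\tfrac12\langle AI,I\rangle+\cdots$ to second order in the actions $I\in\Rr^d$ --- and, moreover, $\det A\neq 0$. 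The fact to be used is that a non-degenerate elliptic periodic point has, in every neighbourhood of its orbit, hyperbolic periodic orbits whose invariant manifolds intersect, and that an arbitrarily small perturbation supported near the orbit renders one such intersection transverse; hence $f$ (after that perturbation) possesses a nontrivial hyperbolic basic set. For $d=1$ this is classical (Poincar\'e--Birkhoff inside the twist normal form, plus a splitting perturbation); for $d\ge2$ one reduces, along a resonance, to a lower-dimensional subsystem carrying a hyperbolic periodic orbit, relying on the corresponding results in the literature. Equivalently, one may take as dynamical input the single assertion that a symplectomorphism with an elliptic periodic orbit can be $C^\infty$-perturbed, by a perturbation supported in an arbitrarily small neighbourhood of the orbit, so as to acquire a horseshoe.

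\emph{Perturbation lemma (the technical core).} If the body $B$ has a periodic orbit of period $n\ge m_d$ with impact points $q_1,\dots,q_n$, which --- after a harmless preliminary perturbation --- we may assume distinct, then for every $\varepsilon>0$ and every choice of disjoint small neighbourhoods $U_i\ni q_i$ there is a $C^\infty$ $\varepsilon$-perturbation $B'$ of $B$ that agrees with $B$ outside $\bigcup_iU_i$, is still convex, retains $q_1,\dots,q_n$ as a periodic orbit, and whose return map at that orbit has $4$-jet an arbitrary prescribed small perturbation of the original one --- so that, in particular, one can make the orbit non-degenerate (and, if desired, simultaneously produce a transverse homoclinic point). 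The mechanism is standard: near $q_i$ one perturbs $\partial B$ keeping fixed the tangent hyperplane along the orbit --- so the reflection law, hence the orbit, is unchanged --- while freely modifying the second fundamental form and its first two derivatives at $q_i$; via the Jacobi-field (``mirror'') formulas for $Df$ and its higher differentials along the segment through $q_i$, this produces a controlled perturbation of the $4$-jet of $f$ there. Composing around the orbit, one then checks that the resulting linear map from the boundary-jet perturbations at $q_1,\dots,q_n$ to the perturbation of the $4$-jet of the return map at $p$ is surjective; this is a dimension count over the relevant jet space, whose size is governed by the quartic part (of dimension $\binom{2d+3}{4}$), and it is precisely here that $n\ge m_d=4\binom{2d+3}{4}$ enters. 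Smallness preserves convexity, and disjointness of the $U_i$ --- hence distinctness of the $q_i$ --- preserves the combinatorial type of the orbit.

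\emph{Baire argument.} For $n\ge m_d$, let $\RR_n$ be the set of smooth convex bodies whose billiard maps have the property that every periodic orbit of period $n$ has a return map with simple eigenvalues, none equal to $1$, the elliptic ones satisfying $\langle k,\theta\rangle\notin 2\pi\Zz$ for $0<|k|_1\le 4$ and having $\det A\neq 0$. Simplicity of the eigenvalues and their separation from $1$ and from the resonances are open conditions that also forbid a period-$n$ orbit from changing its elliptic/hyperbolic type under small perturbations, while there is a uniform finite bound on the number of such orbits and they persist with none created or destroyed nearby; hence $\RR_n$ is open. It is dense: given $B$, one treats its finitely many period-$n$ orbits one at a time, applying the perturbation lemma (legitimate since $n\ge m_d$) and a standard transversality argument to make all eigenvalues simple, to push them off $1$ and off the order-$\le4$ resonances, and to make the torsion non-degenerate. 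Therefore $\RR:=\bigcap_{n\ge m_d}\RR_n$ is residual, and if a body $B\in\RR$ has an elliptic periodic orbit of period $n\ge m_d$, then that orbit is non-degenerate, so by the dynamical input the billiard of $B$ has a horseshoe. The main obstacle is the perturbation lemma --- establishing surjectivity of the jet map and thereby pinning down $m_d$ --- which requires both an explicit expansion of the jets of the billiard map in terms of the local geometry of $\partial B$ and a linear-algebraic argument that $n\ge m_d$ distinct reflection points contribute enough independent directions to span the quartic jet space; the $d\ge2$ case of the dynamical input is a secondary difficulty.
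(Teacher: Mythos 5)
Your architecture---a jet-realization lemma for billiards, a Baire-category construction of $\RR$, and a normal-form analysis at the elliptic orbit---is the same as the paper's. The perturbation lemma you flag as the technical core is precisely the paper's Klingenberg--Takens theorem for billiards (Theorem~\ref{thm: K-T}), with the same count $m_d=4\binom{2d+3}{4}$ coming from the dimension of the quartic jet space, and your residual set is essentially the paper's ``4-elementary weakly monotonous'' condition (Proposition~\ref{prop: generic 4-el wm}). So far, so good.

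The gap is in the ``dynamical input.'' First, it is not a black box. For $d\geq 2$ the paper's reduction from a 4-elementary weakly monotonous $q$-elliptic point to a horseshoe is a genuine chain: Birkhoff normal form, scaled polar coordinates, Moser's $C^1$-approximation lemma, the Contreras--Arnaud--Herman theorem (Theorem~\ref{th:contreras}) to produce a nearby $1$-elliptic periodic orbit, restriction to its two-dimensional normally hyperbolic center manifold, Angenent's criterion on essential invariant curves of area-preserving twist maps, and then Katok's theorem to pass from positive entropy to a transverse homoclinic point. ``Reducing along a resonance and relying on the literature'' elides all of that. Second, and more seriously, your conclusion is logically unsound as written: your own dynamical input says that $f$ \emph{after a further perturbation} has a horseshoe, while your final sentence concludes that ``the billiard of $B$ has a horseshoe'' for $B\in\RR$ with no perturbation---these are different claims. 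Moreover the required perturbation cannot be taken as an abstract symplectic one supported in a small neighbourhood of the orbit in phase space; for a billiard it must be realized by a $C^2$-small deformation of the \emph{body}, and such a deformation is not local in phase space (changing $\partial Q$ near one reflection point alters the return map along every trajectory passing through that region). The paper supplies exactly this missing ingredient as a billiard version of Donnay's splitting perturbation (Theorem~\ref{thm: Donnay}), a separate theorem with its own Jacobi-coordinate computation, and invokes it only when Angenent's criterion fails to give positive entropy outright (namely, when every rational rotation number in a small interval carries an essential invariant curve, generically a heteroclinic chain of hyperbolic periodic points). Your proposal does not engage with this dichotomy and treats the splitting as an abstract symplectic fact; that is the principal error.
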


\begin{theorem}\label{thm: 2}
There is a $C^\infty$-residual set $\RR$ of smooth convex bodies such that the following holds. If a billiard inside a body in $\RR$ has all periodic points with period $\geq m_d$ hyperbolic and this property is $C^2$-stable, then the closure of the set of those hyperbolic points contains a nontrivial hyperbolic basic set.
\end{theorem}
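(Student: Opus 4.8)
The plan is to establish a localized, billiard analogue of the theorem that $C^1$ star systems are Axiom A: the $C^2$-stable hyperbolicity of the high-period periodic orbits forces the derivative cocycle along them to be \emph{uniformly} hyperbolic, this propagates to hyperbolicity of $\Lambda_0:=\overline{P}$, where $P:=\Per^{\mathrm{hyp}}_{\geq m_d}(f)$ is the set of hyperbolic periodic points of period $\geq m_d$, and a nontrivial basic set is then located inside $\Lambda_0$ by genericity. Throughout, $f$ denotes the billiard map of a body in $\RR$, and $\RR$ is a $C^\infty$-residual set chosen so that $f$ is Kupka--Smale and, in addition, satisfies the generic properties invoked below: sufficient density of periodic orbits and good behaviour of homoclinic classes (transitivity, local maximality, homoclinic relatedness of orbits of equal index). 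These follow in the usual way from Kupka--Smale theory together with the billiard-adapted closing, connecting and transversality perturbation lemmas of the earlier sections.

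\emph{Step 1: uniform hyperbolicity along periodic orbits.} By hypothesis the property ``every periodic point of period $\geq m_d$ is hyperbolic'' holds on a whole $C^2$-neighbourhood $\mathcal{U}$ of the body. If the linearized Poincar\'e maps over $P$ were not uniformly hyperbolic, there would be a sequence $p_k\in P$ whose return maps have an eigenvalue approaching the unit circle, or whose contraction/expansion is not witnessed in any fixed number of iterates, or whose stable/unstable angle degenerates. A billiard-adapted Franks' lemma --- realizing a prescribed $C^1$-small perturbation of the derivative cocycle along the finite orbit of $p_k$ by an honest $C^2$-small, strict-convexity-preserving deformation of the body --- would then produce a body in $\mathcal{U}$ for which $p_k$ is a non-hyperbolic periodic point of the same period $\geq m_d$, contradicting $C^2$-stability. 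Hence there are uniform constants $m\in\Nn$, $\lambda\in(0,1)$ and $\alpha>0$ such that every $p\in P$ admits a $Df$-invariant splitting $T_pM=E_p\oplus F_p$ obeying \eqref{uh} with these $m,\lambda$ and $\angle(E_p,F_p)\geq\alpha$.

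\emph{Step 2: hyperbolicity of $\Lambda_0$, and a basic set inside it.} The set $\Lambda_0$ is compact, $f$-invariant and contained in the nonwandering set, and $P$ is dense in it by construction; the uniform estimates of Step 1 yield a continuous, strictly invariant cone field on a neighbourhood of $\Lambda_0$, so the periodic splitting extends by continuity to a $Df$-invariant dominated splitting $T_{\Lambda_0}M=E\oplus F$. For $d=1$, area preservation together with uniform transversality forces $E$ to be uniformly contracted and $F$ uniformly expanded, so $\Lambda_0$ is hyperbolic; for $d\geq2$ one reaches the same conclusion by a Ma\~n\'e-type cone argument combining domination, the uniform period-$m$ estimates, density of $P$, and the symplectic structure. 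Now fix any $p\in P$: by genericity its homoclinic class $H(p)$ is a nontrivial (hence infinite), transitive, locally maximal hyperbolic set, and since transverse homoclinic orbits of $p$ are shadowed by periodic orbits whose periods tend to infinity, hence are eventually $\geq m_d$, one has $H(p)\subseteq\overline{P}=\Lambda_0$. Thus $\Lambda_0$ contains the nontrivial hyperbolic basic set $H(p)$, which is the assertion.

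\emph{Main obstacle.} The crux is Step 1 together with the realization of Franks' lemma in the billiard setting: one must produce \emph{every} prescribed small perturbation of the derivative cocycle along a periodic billiard trajectory through an actual $C^2$-small deformation of the body that stays strictly convex --- this is exactly where the argument leans on the perturbative machinery of the preceding sections, and where the threshold $m_d$ enters, guaranteeing the orbit is long enough for such perturbations to be available and mutually independent along it. A secondary difficulty is the passage from uniform hyperbolicity of periodic orbits to genuine hyperbolicity of $\Lambda_0$ when $d\geq2$: domination does not by itself give hyperbolicity, and one must use the symplectic form to rule out a nontrivial central bundle.
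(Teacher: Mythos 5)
Your Step 1 (Franks' lemma plus a Ma\~n\'e/Contreras-type dichotomy to upgrade stable hyperbolicity of the periodic family to \emph{uniform} hyperbolicity) and the first half of Step 2 (propagation to $\Lambda_0=\overline{P}$ via continuity of cone fields) follow essentially the same route as the paper: the paper represents the derivative cocycle along each periodic orbit as a family of periodic sequences of bounded symplectic matrices, uses the billiard Franks' lemma to show the family is \emph{stably} hyperbolic (otherwise one could $C^2$-perturb the body to create a non-hyperbolic periodic point of period $\geq m_d$, contradicting membership in $\Int_{C^2}\HH$), and then invokes~\cite[Theorem 8.1]{contreras-am-2010} for uniformity. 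Your sketch of ``for $d\geq2$ one reaches the same conclusion by a Ma\~n\'e-type cone argument'' is vague about which theorem actually closes this step, but it is pointing in the right direction.

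The genuine gap is in your final paragraph. You conclude by fixing $p\in P$ and asserting ``by genericity its homoclinic class $H(p)$ is a nontrivial (hence infinite), transitive, locally maximal hyperbolic set.'' This is not justified and is not a consequence of Kupka--Smale genericity or of the perturbation lemmas in the paper: a hyperbolic periodic orbit may simply have $W^s(O(p))\cap W^u(O(p))=O(p)$, in which case $H(p)=O(p)$ is a single (trivial) periodic orbit, and nothing generic rules this out for a particular $p$. Demonstrating the \emph{existence} of some transverse homoclinic intersection is essentially the content of the theorem being proved, so assuming it at this stage is circular. The paper sidesteps this entirely: once $\Lambda(\phi)=\overline{\Per_{m_d}(\phi)}$ is shown to be hyperbolic, it invokes the Spectral Decomposition Theorem (citing Robinson) to write $\Lambda$ as a finite disjoint union of basic sets, and then uses Theorem~\ref{thm: infinito} (infinitude of periodic orbits for $\phi\in\emb\infty$) to conclude that $\Lambda$ is infinite, hence at least one of those finitely many basic sets is infinite, i.e.\ nontrivial. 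That argument requires only the established hyperbolicity plus the already-cited count of periodic orbits, and makes no claim about homoclinic classes of any individual periodic point. If you replace your last paragraph by this finiteness-of-the-decomposition argument, your proof aligns with the paper's.
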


The above theorems are restated in Theorem~\ref{thm: ell} and Theorem~\ref{thm: hyp}, where all the relevant definitions and mathematical objects are thoroughly explained.

The openness property in Theorem~\ref{main thm} follows immediately from the structural stability of hyperbolic basic sets. 
To show denseness, which is the non-trivial part of the proof, we first observe that, among periodic points of sufficiently large period, the billiard inside a generic body either has an elliptic periodic point (A) or all periodic points are hyperbolic (B). 
In case (A) we are in the condition of Theorem~\ref{thm: 1} and there is a horseshoe.
On the other hand, in case (B),  we split again the proof in two alternatives (B1) and (B2) according to whether or not the hyperbolicity of all periodic points is preserved under $C^2$-perturbations.
For (B1) we use Theorem~\ref{thm: 2}, whereas in the  case (B2)
we  $C^2$-approximate the body by another body verifying (A).

It is worth pointing out that  it is an open problem whether or not there exist bodies as in Theorem~\ref{thm: 2} for which all periodic points of period $\ge m_d$ are hyperbolic.
The reason for this theorem is to deal with the possibility of having such billiards in the proof of Theorem~\ref{main thm}.

The proofs of Theorems~\ref{thm: 1} and ~\ref{thm: 2} require several perturbation lemmas in the context of billiards in bodies.
The most important ones are the versions of the Klingenberg-Takens theorem (Theorem~\ref{thm: K-T}) and the Franks' lemma (Theorem~\ref{Franks}).
These are new results that can be applied in further problems.

It is well-known that the dynamics around elliptic periodic orbits depends on high order derivatives of the map. 
In particular, a higher dimensional generalization of the twist property for area-preserving maps, the weakly monotonous property, requires up to the third derivative of the map.
The Klingenberg-Takens theorem for geodesic flows~\cite{KT-1972} gives a way to perturb metrics so that the jets of the Poincar\'e maps of closed orbits are inside a given invariant open dense set. We prove here the billiards version of this result (see ~\cite{carballo-miranda-2013} for the Tonelli Hamiltonians case). 
The local behaviour of weakly monotonous elliptic points plays a major role in the proof of Theorem~\ref{thm: 1}.

The Franks' lemma first appeared in~\cite[Lemma 1.1]{Franks1971} stating that perturbations of the derivative of diffeomorphisms at a finite set are indeed also derivatives of a $C^1$-close diffeomorphism (note that the result is no longer true for the $C^2$-topology~\cite{PS2009}).
This lemma is an essential tool to prove a variety of important and fundamental results on the stability and generic theories of dynamical systems displaying properties such as shadowing, structural stability, topological stability and expansiveness. 
Versions of the Franks lemma for more restricted classes of dynamical systems are available for flows~\cite{MPP2004,BGV}, volume-preserving diffeomorphisms~\cite{BDP2003}, divergence-free flows~\cite{BR}, symplectomorphisms and Hamiltonian flows~\cite{A-LD2014}.
The version for geodesic flows due to Contreras~\cite{contreras-am-2010} is more difficult since the $C^2$-perturbations are performed on the metric, so not local in the phase space.
Further extensions are available in~\cite{LRR2016}.
Here we present the billiards in bodies case generalizing the version for planar billiards~\cite{visscher-2015}.
Notice that the $C^2$-perturbations of the bodies are also not local in the phase space.
The ability to realize the perturbation of the tangent map is a crucial part of the proof of Theorem~\ref{thm: 2}.

For the particular case of planar billiards ($d=1$), it is known for  $r\geq3$ that a $C^r$-generic  convex domain  has a horseshoe~\cite{Cheng2004,CKP2007}. The proof is based on variational methods for two-dimensional twist maps which do not extend directly to higher dimensional billiards (see~\cite{Bialy2009} for an application of some variational properties in multidimensional billiards).  
For $d\geq1$, several properties related to periodic orbits are known. In particular, the existence of infinitely many periodic orbits ~\cite{farber-2002,farber-tabachnikov-2002,karasev2009}, and some generic properties due to Petkov and Stojanov ~\cite{stojanov-ETDS-1987,petkov-stojanov-mathZ-1987,petkov-stojanov-AJM-1987,petkov-ETDS-1988} (see section~\ref{sec: generic properties}).

In section~\ref{sec:coordinates} we introduce the basic setup concerning billiards in bodies, including the computation of the billiard map and its derivative. 
Theorem~\ref{main thm} is proved in section~\ref{sec: proof}.
The proof follows from Theorems~\ref{thm: ell} and ~\ref{thm: hyp}, corresponding to Theorems~\ref{thm: 1} and ~\ref{thm: 2}, respectively.
The former is proved in section ~\ref{sec: ellipt}, being crucial the version of the Klingenberg-Takens theorem given in section~\ref{sec: KT} and also a multidimensional version of a perturbation by Donnay~\cite{Donnay-2005} (Theorem~\ref{thm: Donnay}) that creates a transversal heteroclinic intersection.
The latter is proved in section~\ref{sec: hyp}, where it is required the use of the Franks' lemma version for billiards on bodies included in section~\ref{sec: Franks}.

\section{Billiard map}\label{sec:coordinates}

\subsection{Smooth convex bodies}

Let $r\in\{2,3,\ldots,\infty\}$, the $d$-sphere $S$, $d\geq1$, and the set of $C^r$ maps $S\to \Rr^{d+1}$ denoted by 
$
C^r(S,\Rr^{d+1}).
$
The subset of maps that are embeddings (i.e. $C^r$-diffeomorphisms onto its image) is written as
$$
C_{emb}^{r}(S,\Rr^{d+1}).
$$

The image of an embedding of the sphere is a $d$-$C^r$-submanifold of $\Rr^{d+1}$.
Given $\phi\in C_{emb}^{r}(S,\Rr^{d+1})$, we denote by $Q_\phi$ the bounded set whose boundary is $\Gamma_\phi := \phi(S)$.

Throughout, we fix a finite atlas $\{(\varphi_i,U_i)\}_{i\in I}$ of $S$. Let $p\in \Gamma_\phi$ and denote by $(\varphi,U)$ a chart of $S$ for which $p\in \phi(U)$. 
The tangent vectors 
\begin{equation}\label{eq: tg vectors}
t_j=t_j(p) = D(\phi\circ \varphi^{-1})(\varphi\circ \phi^{-1}(p) )\, e_j,
\quad
j=1,\dots,d,
\end{equation} 
define a basis for the tangent space $T_p\Gamma_\phi$, where $\{e_1,\dots,e_d\}$ denotes the canonical basis of $\Rr^d$. 
In addition,
\begin{eqnarray*}
Dt_i(p)\,t_j(p)
&=&
\left.\frac{d}{dr}\right|_{r=0}\left[t_j\circ \varphi\circ \phi^{-1}(re_i+\varphi\circ \phi^{-1}(p)\right]
\\
&=&
(\partial_i\partial_j(\phi_k\circ \varphi^{-1})(\varphi\circ \phi^{-1}(p) ))_{k=1}^d.
\end{eqnarray*}
So, we are able to deduce the following relation:
\begin{equation}\label{eq: Dt_j}
Dt_i(p)\,t_j(p)=Dt_j(p)\,t_i(p).
\end{equation}

By using the Euclidean inner product $\langle\cdot,\cdot\rangle$ and its corresponding norm $\|\cdot\|$, we denote by $N_\phi(p)\in\Rr^{d+1}$ the unit normal vector of $\Gamma_\phi$ at $p$ which is inward-pointing in $Q_\phi$. 
Since $N_\phi(p)$ has unit length, the derivative  $DN_\phi(p)$ maps the tangent space $T_p\Gamma_\phi$ into itself. 
This follows by differentiating $\langle N_\phi,N_\phi\rangle=1$ along $u\in T_p\Gamma_\phi$ at $p$, so that $\langle DN_\phi(p)\,u,N_\phi(p)\rangle=0$.

The shape operator of $\Gamma_\phi$ at $p$ is the linear map $L_\phi(p)\colon T_p\Gamma_\phi\to T_p\Gamma_\phi$ defined by 
$$
L_\phi(p)u=-DN_\phi(p)u.
$$ 
The first fundamental form of $\Gamma_\phi$ (or of $\phi$) at $p$ is the inner product restricted to the tangent space, i.e.
$$
\Ii_\phi(p)(u,v)=\langle u,v\rangle,
\quad
u,v\in T_p\Gamma_\phi
$$
and the second fundamental form of $\Gamma_\phi$ at $p$ is the bilinear map
$$
\ff_\phi (p)(u,v)=\langle L_\phi(p)u,v\rangle,
\quad
u,v\in T_p\Gamma_\phi.
$$
By differentiating $\langle N_\phi,t_j\rangle=0$ along $t_i$ at $p$ one gets the matrix representation of the second fundamental form in the basis $\{t_j\}_{j=1}^d$:
\begin{equation}\label{eq: 2nd fund form coeffs}
\ff_\phi(p)(t_i,t_j)=\langle Dt_i(p)\,t_j(p),N_\phi(p)\rangle.
\end{equation}
From~\eqref{eq: Dt_j} the symmetry of $\ff_\phi(p)$ follows, i.e. $\ff_\phi(p)(u,v)=\ff_\phi(p)(v,u)$, and the shape operator $L_\phi(p)$ is therefore self-adjoint.

Recall that a convex body is a convex, compact with non-empty interior subset of $\Rr^{d+1}$. We call it $C^r$-smooth if its boundary is the image of an embedding in $C_{emb}^r(S,\Rr^{d+1})$.

The set $Q_\phi$  is a smooth convex body iff $\ff_\phi(p)(u,u)\geq0$ for every $p\in \Gamma_\phi$ and $u\in T_p \Gamma_\phi$. 
The corresponding class of convex embeddings is denoted by 
$$
\XX^r \subset C_{emb}^{r}(S,\Rr^{d+1}).
$$

We denote the subset of the embeddings of the sphere corresponding to boundaries of bodies satisfying $\ff_\phi(p)(u,u)>0$ for every $p\in \Gamma_\phi$ and $u\in T_p \Gamma_\phi$, by
$$
\emb r \subset \XX^r.
$$
Notice that these are strictly convex bodies.

These spaces of embeddings are identified with the corresponding spaces of bodies in $\Rr^{d+1}$.

We will often drop the subscript $\phi$ to simplify notations.

\subsection{$C^r$-topology}

Recall the Whitney $C^r$ topology for $r\in\Nn$ given by the norm
$$
\|\phi\|_{C^r}= \max_{0\leq j\leq r} \max_{i\in I} \max_{y\in \varphi_i(U_i)} \|D^j (\phi\circ\varphi_i^{-1})(y)\|
$$
for any $\phi\in C^r(S,\Rr^{d+1})$.
This makes $C^r(S,\Rr^{d+1})$ a Baire space.
The union of the $C^r$-open sets of $C^\infty(S,\Rr^{d+1})$ for $r\in\Nn$ form a basis for the Whitney $C^\infty$-topology, making this also a Baire space.

Furthermore, $\emb r$ is $C^r$-open in $C^r(S,\Rr^{d+1})$ and also a Baire space for any $r\in\Nn\cup\{\infty\}$.

Clearly, $\emb r$ is $C^r$ open and dense in $\XX^r$.

\subsection{Perturbing the body} 

We consider here perturbations of  convex bodies along the normal at a boundary point. We also relate the respective second fundamental forms.

\begin{lemma}\label{lem:perturbing body}
Let $\phi\in \emb{r}$,  $r\in\{2,3,\ldots,\infty\}$, 
$p_0\in \Gamma:=\phi(S)$,
$s_0=\phi^{-1}(p_0)$
 and $\psi\colon S\to\Rr$ be $C^r$ such that $\psi(s_0)=0$ and 
 $D\psi(s_0)=0$. 
 If $\|\psi\|_{C^r}$ is sufficiently small, then 
\begin{enumerate}
\item $\tilde{\phi}:=\phi + \psi \cdot N(p_0)\in \emb r$,
\item $\|\tilde{\phi}-\phi\|_{C^\ell}\leq \|\psi\|_{C^\ell}$  for every $2\leq \ell\leq r$,
\item $\ff_{\tilde\phi}(p_0)=\ff_\phi(p_0)+D^2(\psi\circ\phi^{-1})(p_0)$.
\end{enumerate}
\end{lemma}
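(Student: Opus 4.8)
The plan is to verify the three claims in order, treating them as consequences of elementary perturbation estimates once the smallness of $\|\psi\|_{C^r}$ is quantified. First I would observe that $\tilde\phi = \phi + \psi\cdot N(p_0)$ differs from $\phi$ by a fixed vector $N(p_0)\in\Rr^{d+1}$ times the scalar function $\psi$; since $N(p_0)$ is a constant, $D^j(\tilde\phi\circ\varphi_i^{-1}) = D^j(\phi\circ\varphi_i^{-1}) + D^j(\psi\circ\varphi_i^{-1})\cdot N(p_0)$ for all $j$, which gives claim (2) immediately from the definition of $\|\cdot\|_{C^\ell}$ (the factor $\|N(p_0)\|=1$ is what makes the inequality clean rather than merely up to a constant). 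For claim (1), recall that $\emb r$ is $C^r$-open in $C^r(S,\Rr^{d+1})$; combined with (2), it suffices that $\|\psi\|_{C^r}$ be smaller than the radius of a $C^r$-ball around $\phi$ contained in $\emb r$. One should also remark that $\tilde\phi$ is still an embedding (same openness argument) so $Q_{\tilde\phi}$ is a bona fide strictly convex body and the orientation of the inward normal is preserved for small perturbations, so that the second fundamental form $\ff_{\tilde\phi}$ is well-defined with the same sign convention.

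The substantive point is claim (3), the formula for $\ff_{\tilde\phi}(p_0)$. Here I would work in a chart $(\varphi,U)$ with $p_0\in\phi(U)$ and use the coordinate expression \eqref{eq: 2nd fund form coeffs}, namely $\ff_\phi(p)(t_i,t_j) = \langle Dt_i(p)\,t_j(p),N_\phi(p)\rangle$, which in chart coordinates reads $\ff_\phi(p_0)(t_i,t_j) = \langle \partial_i\partial_j(\phi\circ\varphi^{-1})(s_0),N_\phi(p_0)\rangle$. The key inputs are: (a) at the base point $s_0$ we have $\psi(s_0)=0$ and $D\psi(s_0)=0$, so the tangent vectors $\tilde t_j$ of $\Gamma_{\tilde\phi}$ at $\tilde p_0 := \tilde\phi(s_0)$ satisfy $\tilde t_j = D(\tilde\phi\circ\varphi^{-1})(s_0)\,e_j = t_j + D(\psi\circ\varphi^{-1})(s_0)\,e_j \cdot N(p_0) = t_j$, i.e. the tangent space and its frame are unchanged; and (b) consequently the inward unit normal is also unchanged, $N_{\tilde\phi}(\tilde p_0) = N_\phi(p_0)$, since it is determined (up to sign, fixed by the convexity/inward condition) by orthogonality to $T_{\tilde p_0}\Gamma_{\tilde\phi} = T_{p_0}\Gamma_\phi$. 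Then I differentiate twice in the chart:
\begin{equation*}
\partial_i\partial_j(\tilde\phi\circ\varphi^{-1})(s_0) = \partial_i\partial_j(\phi\circ\varphi^{-1})(s_0) + \bigl(\partial_i\partial_j(\psi\circ\varphi^{-1})(s_0)\bigr)\, N(p_0),
\end{equation*}
and pair with $N(p_0)$. The first term gives $\ff_\phi(p_0)(t_i,t_j)$; the second gives $\partial_i\partial_j(\psi\circ\varphi^{-1})(s_0)\,\langle N(p_0),N(p_0)\rangle = \partial_i\partial_j(\psi\circ\varphi^{-1})(s_0)$, which is precisely the $(i,j)$ entry of the Hessian $D^2(\psi\circ\phi^{-1})(p_0)$ evaluated on $(t_i,t_j)$ (interpreting $D^2(\psi\circ\phi^{-1})(p_0)$ as the symmetric bilinear form on $T_{p_0}\Gamma_\phi$ obtained by pulling back the chart Hessian of $\psi\circ\varphi^{-1}$; note the first-order correction terms involving $D(\psi\circ\varphi^{-1})$ and the chart-dependent connection terms all vanish because $D\psi(s_0)=0$). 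This yields (3).

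The main obstacle, and the only place requiring genuine care, is the bookkeeping in claim (3): one must be sure that the formula $\ff_{\tilde\phi}(p_0) = \ff_\phi(p_0) + D^2(\psi\circ\phi^{-1})(p_0)$ is chart-independent and that ``$D^2(\psi\circ\phi^{-1})(p_0)$'' means the intrinsic Hessian of a function on the hypersurface, which is well-defined at a \emph{critical} point of that function without reference to a metric or connection — and $p_0$ is critical for $\psi\circ\phi^{-1}$ precisely because $D\psi(s_0)=0$. The hypotheses $\psi(s_0)=0$ and $D\psi(s_0)=0$ are exactly what kills the zeroth- and first-order discrepancies (displacement of $p_0$, tilt of the tangent plane, change of normal, and the connection terms in the second derivative), leaving only the clean Hessian term. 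I would state this observation explicitly and then the computation above closes the argument. The estimates (1)–(2) are routine and I would dispatch them in a sentence each.
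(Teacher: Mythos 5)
Your proof is correct and follows essentially the same route as the paper's: fix that $\tilde t_j(p_0)=t_j(p_0)$ and $\tilde N(p_0)=N(p_0)$ using $D\psi(s_0)=0$, differentiate $\tilde\phi\circ\varphi^{-1}$ one more time in a chart, pair with $N(p_0)$, and identify the chart Hessian of $\psi\circ\varphi^{-1}$ with $D^2(\psi\circ\phi^{-1})(p_0)$ via the chain rule and the criticality of $p_0$. The only organizational difference is that you obtain claim (1) from claim (2) and $C^r$-openness of $\emb r$, whereas the paper presents it as a consequence of the second-fundamental-form computation; both routes are valid.
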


\begin{proof}
Denote by $\tilde{N}$ the unit normal vector field of $\tilde{\Gamma}$ where $\tilde{\Gamma}:=\tilde\phi(S)$, and $\{\tilde{t_j}(p)\}_{j=1}^d$ the basis of $T_{p}\tilde\Gamma$, $p\in\tilde \Gamma$, as in~\eqref{eq: tg vectors} now for $\tilde{\phi}$.
Clearly, $p_0=\tilde{\phi}\circ \phi^{-1}(p_0) \in\tilde{\Gamma}$, $\tilde{t_j}(p_0)=t_j(p_0)$ and $\tilde{N}(p_0)=N(p_0)$.
Hence,
\begin{eqnarray*}
D(\tilde t_j-t_j)(p_0)\,t_i(p_0)
&=&
D^2(\psi \circ \varphi^{-1})(\varphi\circ\phi^{-1}(p_0))\,(e_i,e_j)\,N(p_0)\\
&=&
D^2 (\psi\circ\phi^{-1}) (p_0) \,(t_i,t_j)\,N(p_0).
\end{eqnarray*}
The last equality comes from the computation of $D^2 (\psi\circ\phi^{-1}\circ\phi\circ\varphi^{-1}) (\varphi\circ\phi^{-1}(p_0))$ taking into account that $D\psi(s_0)=0$.

Finally, by~\eqref{eq: 2nd fund form coeffs} we get
\begin{eqnarray*}
\ff_{\tilde \phi}(p_0)(t_i,t_j)-\ff_\phi(p_0)(t_i,t_j) 
&=&
\langle D(\tilde t_i-t_i)(p_0)\,t_j(p_0),N(p_0)\rangle \\
&=&
D^2(\psi\circ\phi^{-1})(p_0) \, (t_i,t_j).
\end{eqnarray*}
For a small $\|\psi\|_{C^r}$ this implies that $\tilde\phi\in\emb r$.
\end{proof}

%%%%%%%%%%%%%%%%%%%%%%%%%%%%%%%%%%%%%%
\subsection{The billiard map}

Let $\phi \in \emb{r}$. A convex body $Q_\phi$ whose boundary $\Gamma_\phi$ is the image of $\phi\in \emb r$ is called a billiard domain. 
The corresponding billiard is the flow on the unit tangent bundle of $Q_\phi$ generated
by the motion of a free point-particle inside the body with specular reflection
at $\Gamma_\phi$, i.e. the angle of reflection equals the angle of incidence. The billiard map $f_\phi$ is the first return map on $M_\phi$, the set of
unit vectors attached to $\Gamma_\phi$ and pointing inside $Q_\phi$.
More precisely, $f_\phi$ is a $C^{r-1}$-diffeomorphism (it has no singularities because the second fundamental form of $\partial Q_\phi$ is positive definite, see \cite{Kov88}) on the $2d$-dimensional manifold
$$
M_\phi=\left\{(p,v)\in \Gamma_\phi\times \Rr^{d+1}\colon \|v\|=1,\, \langle v,N(p)\rangle \geq 0\right\}.
$$

We will frequently refer to the dynamics of the billiard map $f_\phi$ on $M_\phi$ (or simply on $\phi$) as the dynamics of $\phi$.
In the following we omit the dependency of $\Gamma_\phi$, $M_\phi$ and $f_\phi$ on $\phi$.  We also define the projection $\pi_1\colon M\to \Gamma$ by $(p,v)\mapsto p$. The orbit of a point $x\in M$ under $f$ will be denoted by $O(x)$.

The free flight time $\tau\colon M \to \Rr$ between consecutive collisions on $\Gamma$ is defined as
$$
\tau(p,v)=
\begin{cases}
0,& v\perp N(p)\\
\min\{t>0\colon p+tv \in \Gamma\}, & \text{otherwise.}
\end{cases}
$$

Let $(p,v)\in M$ and $(\bar p,\bar v)=f(p,v)$. Then
$$
\begin{cases}
\bar{p}=p+\tau(p,v)v,\\
\bar{v}=R_{\bar{p}}v,
\end{cases}
$$
where $R_{\bar{p}}$ is the reflection in $T_{\bar{p}}\Gamma$, i.e.
$$
R_{\bar{p}}v=v-2\langle v,N(\bar{p})\rangle N(\bar{p}).
$$
The reflection yields that the vector $\bar v+v$ is tangent to $\Gamma$ at $\bar p$, as given by
\begin{equation}\label{eq: bar v-v}
\langle \bar v+v,N(\bar p)\rangle=0.
\end{equation}
Notice that $(p,v)$ is a fixed point of $f$ whenever $v\perp N(p)$.

%%%%%%%%%%%%%%%%%%%%%%%%%%%%%%%%%%%%%%
\subsection{The derivative of the billiard map}

We introduce a new set of coordinates on $TM$ for which the derivative $Df$ has a convenient form. These coordinates are called Jacobi coordinates, induced by the so-called transversal Jacobi fields~\cite[Appendix B]{Wojtkowski-1988}.
To make the exposition self-contained we present here all the details.  In the following, we write $\OO(\varepsilon)$ for the usual big-O notation, i.e. a quantity that is uniformly bounded in norm by $const\cdot \varepsilon$  as $\varepsilon\to 0$.

Define the set of billiard directions at a point $p\in\Gamma$ by
$$
V_p=\{u\in\Rr^{d+1}\colon \|u\|=1, \langle u,N(p)\rangle >0 \}.
$$

Fix now $(p,v) \in M$ such that $v\in V_p$.
Since the billiard domain is strictly convex, we have $(\bar{p},\bar{v})=f(p,v)$ satisfies $\bar v\in V_{\bar p}$.

Notice that
$$
T_{(p,v)} M = N(p)^\perp\times v^\perp,
$$ 
where $N(p)^\perp$ and $v^\perp$ denote the hyperplanes in $\Rr^{d+1}$ that are orthogonal to $N(p)$ and $v$, respectively. 

%%%%
\subsubsection{Projections}

Let $P_{v}$ be the orthogonal projection onto $v^\perp$, i.e.
$$
P_{v }\xi=\xi-\langle \xi,v\rangle v,\quad \xi\in\Rr^{d+1}. 
$$
For a unit vector $\eta\in\Rr^{d+1}$ which is not orthogonal to $v$, define also the projection along the direction $v$ onto the hyperplane $\eta^\perp$ by
$$
P_{\eta}^vu=u-\frac{\langle u,\eta\rangle}{\langle v,\eta\rangle}v,\quad u\in \Rr^{d+1}.
$$
It is simple to check that the adjoint of $P_\eta^v$ is $P_v^\eta$, i.e. $(P_{\eta}^v)^* = P_{v}^\eta$, and that
\begin{equation}\label{eq: proj eta' proj eta}
P_{\eta'}^v\circ P_\eta^v=P_{\eta'}^v
\end{equation}
where $\eta,\eta'$ are both unit vectors non-orthogonal to $v$.
Notice also that
\begin{equation}\label{eq: proj eta v proj v}
P_\eta^v\circ P_v|_{\eta^\perp}=I.
\end{equation}

Recall the definition of the billiard map.
%The reflection $R_{\bar {p}}$ can be seen as an identification of the hyperplanes $v^\perp$ and $\bar v^\perp$.
The reflection $R_{\bar{p}}$ identifies isometrically the hyperplanes  $v^\perp$ and $\bar{v}^\perp$

\begin{lemma}\label{lemma: Rp proj}
$R_{\bar p}$ restricted to $v^\perp$ equals $P_{\bar v}\circ P_{N(\bar p)}^v$, it is injective and $R_{\bar p}(v^\perp)=\bar v^\perp$.
\end{lemma}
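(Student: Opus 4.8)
The plan is to establish the three assertions in order, the heart being a short linear-algebra computation. Throughout write $n=N(\bar p)$ and $c=\langle v,n\rangle$. First I would note that $P_{n}^{v}$ is well defined on all of $\Rr^{d+1}$: since $(\bar p,\bar v)=f(p,v)$ and the billiard domain is strictly convex we have $\bar v\in V_{\bar p}$, hence $\langle\bar v,n\rangle>0$, and then~\eqref{eq: bar v-v} gives $c=\langle v,n\rangle=-\langle\bar v,n\rangle<0$, so $v$ is not orthogonal to $n$ and $c\neq0$.

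Next I would compute $P_{\bar v}\circ P_{n}^{v}$ on an arbitrary $w\in v^{\perp}$. By the definition of the reflection, $\bar v=R_{\bar p}v=v-2cn$, so $\langle w,\bar v\rangle=-2c\langle w,n\rangle$ (using $\langle w,v\rangle=0$) and $\langle v,\bar v\rangle=1-2c^{2}$. Since $P_{n}^{v}w=w-\tfrac{\langle w,n\rangle}{c}\,v$, these identities give $\langle P_{n}^{v}w,\bar v\rangle=-\tfrac{\langle w,n\rangle}{c}$, and therefore
\[
P_{\bar v}\bigl(P_{n}^{v}w\bigr)=P_{n}^{v}w-\langle P_{n}^{v}w,\bar v\rangle\,\bar v=w+\frac{\langle w,n\rangle}{c}\,(\bar v-v)=w-2\langle w,n\rangle\,n=R_{\bar p}w,
\]
where the last step uses $\bar v-v=-2cn$. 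This proves the identity $R_{\bar p}|_{v^{\perp}}=P_{\bar v}\circ P_{n}^{v}|_{v^{\perp}}$.

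Finally, injectivity and the image are immediate: $R_{\bar p}$ is a linear reflection of $\Rr^{d+1}$, hence an isometry and in particular injective, so its restriction to $v^{\perp}$ is injective; moreover, since $R_{\bar p}v=\bar v$, for every $w\in v^{\perp}$ we get $\langle R_{\bar p}w,\bar v\rangle=\langle R_{\bar p}w,R_{\bar p}v\rangle=\langle w,v\rangle=0$, so $R_{\bar p}(v^{\perp})\subseteq\bar v^{\perp}$, and equality follows because both spaces have dimension $d$ while $R_{\bar p}|_{v^{\perp}}$ is injective. The argument is entirely routine; the only place that genuinely uses hypotheses is the well-definedness of $P_{n}^{v}$, which is precisely where strict convexity (forcing $\bar v\in V_{\bar p}$) enters, and there is no real obstacle.
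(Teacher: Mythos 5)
Your proof is correct and follows essentially the same route as the paper: a direct computation showing $P_{\bar v}\circ P_{N(\bar p)}^{v}$ agrees with $R_{\bar p}$ on $v^{\perp}$, using $\langle w,v\rangle=0$ and $\bar v-v=-2\langle v,N(\bar p)\rangle N(\bar p)$. You additionally spell out the well-definedness of $P_{N(\bar p)}^{v}$ and the injectivity/image statements, which the paper's proof leaves implicit; no issues.
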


\begin{proof}
For $u\in v^\perp$ we have
\begin{align*}
P_{\bar v}\circ P_{N(\bar p)}^v u &=  P_{N(\bar p)}^v u - \langle  P_{N(\bar p)}^v u, \bar v\rangle \bar{v}\\
&=P_{N(\bar p)}^v u - \langle  P_{N(\bar p)}^v u,  v\rangle \bar{v}\\
&=u-\frac{\langle u, N(\bar p)\rangle}{\langle v,N(\bar p)\rangle}v - \langle  u-\frac{\langle u, N(\bar p)\rangle}{\langle v,N(\bar p)\rangle}v,  v\rangle \bar{v}\\
&= u-\langle u, v\rangle \bar{v}+ \frac{\langle u, N(\bar p)\rangle}{\langle v,N(\bar p)\rangle}(\bar{v}-v)\\
&= u-2\frac{\langle u, N(\bar p)\rangle}{\langle v,N(\bar p)\rangle}\langle v,N(\bar p)\rangle N(\bar p)\\
&=u-2\langle u, N(\bar p)\rangle N(\bar p)\\
&=R_{\bar p}u.
\end{align*}
\end{proof}

%%%%
\subsubsection{Jacobi coordinates}

In neighbourhoods $B$ of $(p,v)$ and $\bar B$ of $(\bar{p},\bar{v})$ in $M$ consider the respective changes of coordinates 
\begin{eqnarray*}
&\Psi \colon B\to (p+v^\perp)\times V_p \\
&\Psi(q,u)  = (p+ P_{v}(q-p),u)
\end{eqnarray*}
and
\begin{eqnarray*}
&\bar \Psi\colon \bar B\to (\bar p+\bar v^\perp)\times V_{\bar p} \\
&\bar \Psi(q,u) = (\bar p+ P_{\bar v}(q-\bar p),u).
\end{eqnarray*}
Notice that $\Psi(p,v)=(p,v)$ and
$$
D\Psi(p,v)= (P_{v},\id)\colon N(p)^\perp\times v^\perp\to v^\perp \times v^\perp.
$$

Since $P_{v}\circ P_{\eta}^v= \id$ on $v^\perp$,  
$$
D\Psi^{-1}(p,v)=(P_{N(p)}^v,\id) \colon  v^\perp \times v^\perp\to N(p)^\perp\times v^\perp.
$$

For a sufficiently small neighborhood $B$ of $(p,v)$ in $M$, let 
$$
\tilde{f}=\bar{\Psi}\circ f\circ \Psi^{-1}
\colon \Psi(B)\subset  ( p+ v^\perp)\times V_{p} \to (\bar p+\bar v^\perp)\times V_{\bar p}.
$$

Consider a curve 
$$
(p_\varepsilon,v_\varepsilon)
=\left(p+\varepsilon J ,\frac{v+\varepsilon J'}{\|v+\varepsilon J'\|}\right)
\in \Psi(B),
$$ 
where $(J,J')\in v^\perp\times v^\perp$ and $|\varepsilon|$ is sufficiently small.
Clearly we have 
$$
(p_\varepsilon,v_\varepsilon)=(p,v)+\varepsilon (J,J')+\OO(\varepsilon^2).
$$

Now,
\begin{eqnarray*}
\Psi^{-1}(p_\varepsilon,v_\varepsilon)
&=&
(p,v)+\varepsilon D\Psi^{-1}(p,v)(J,J') +\OO(\varepsilon^2) \\
&=&
(p+\varepsilon u , v+\varepsilon w) +\OO(\varepsilon^2)
\end{eqnarray*}
where $u =P_{N(p)}^v J  \in N(p)^\perp$ and $w=J' \in v^\perp$.

Moreover,
$$
(\bar{p}_\varepsilon, \bar{v}_\varepsilon)=f\circ\Psi^{-1}(p_\varepsilon,v_\varepsilon)
$$
is given by
\begin{eqnarray}
\label{eqn: bar p eps}
\bar{p}_\varepsilon 
&=& p + \varepsilon u+\tau(p+\varepsilon u, v+\varepsilon w )(v+\varepsilon w) + \OO(\varepsilon^2)  \\
\label{eqn: bar v eps}
\bar{v}_\varepsilon 
&=& v+\varepsilon w -2 \langle v+\varepsilon w, N(\bar{p}_\varepsilon)\rangle N(\bar{p}_\varepsilon) + \OO(\varepsilon^2).
\end{eqnarray}

\begin{lemma}
$$
\bar{p}_\varepsilon 
=\bar{p}+\varepsilon P_{N(\bar{p})}^v\left( J+\tau(p,v) J'\right)+\OO(\varepsilon^2).
$$
\end{lemma}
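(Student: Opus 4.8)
The plan is to expand $\bar p_\varepsilon$ from~\eqref{eqn: bar p eps} to first order in $\varepsilon$ and identify the linear term. Writing $\bar p_\varepsilon = p + \varepsilon u + \tau(p+\varepsilon u, v+\varepsilon w)(v+\varepsilon w) + \OO(\varepsilon^2)$, the first task is to Taylor-expand the free flight time: $\tau(p+\varepsilon u, v+\varepsilon w) = \tau(p,v) + \varepsilon\bigl(D_p\tau(p,v)u + D_v\tau(p,v)w\bigr) + \OO(\varepsilon^2)$. Denote this linear correction by $\varepsilon\,\delta\tau$. Then, collecting terms,
\begin{equation*}
\bar p_\varepsilon = \underbrace{p + \tau(p,v)v}_{=\bar p} + \varepsilon\bigl(u + \tau(p,v)w + (\delta\tau)\,v\bigr) + \OO(\varepsilon^2),
\end{equation*}
so it remains to show that $u + \tau(p,v)w + (\delta\tau)v = P_{N(\bar p)}^v\bigl(J + \tau(p,v)J'\bigr)$, recalling $u = P_{N(p)}^v J$ and $w = J'$.

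The key geometric input is that $\bar p_\varepsilon$ must lie on $\Gamma$ for all small $\varepsilon$, which pins down $\delta\tau$. Differentiating the constraint $\bar p_\varepsilon \in \Gamma$ and using that the normal at $\bar p$ is $N(\bar p)$, we get $\langle \tfrac{d}{d\varepsilon}\bigl|_{\varepsilon=0}\bar p_\varepsilon,\, N(\bar p)\rangle = 0$, i.e. $\langle u + \tau(p,v)w + (\delta\tau)v,\, N(\bar p)\rangle = 0$. This is precisely the defining property of the projection $P_{N(\bar p)}^v$ applied to $u + \tau(p,v)w$: indeed $P_{N(\bar p)}^v$ sends a vector to the unique vector differing from it by a multiple of $v$ and lying in $N(\bar p)^\perp$. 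Hence $u + \tau(p,v)w + (\delta\tau)v = P_{N(\bar p)}^v\bigl(u + \tau(p,v)w\bigr)$, with $\delta\tau$ automatically being the right coefficient. The last step is to simplify $P_{N(\bar p)}^v(u + \tau w)$: since $u = P_{N(p)}^v J$ and $w = J' \in v^\perp$, identity~\eqref{eq: proj eta' proj eta} gives $P_{N(\bar p)}^v P_{N(p)}^v J = P_{N(\bar p)}^v J$, while $P_{N(\bar p)}^v$ acts on $\tau J' = \tau w$ by... but one must be careful: $P_{N(\bar p)}^v$ does not fix $v^\perp$, so $P_{N(\bar p)}^v(\tau J') \neq \tau J'$ in general. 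Instead one simply uses linearity: $P_{N(\bar p)}^v(u + \tau w) = P_{N(\bar p)}^v(P_{N(p)}^v J) + \tau\, P_{N(\bar p)}^v(J') = P_{N(\bar p)}^v J + \tau\, P_{N(\bar p)}^v J' = P_{N(\bar p)}^v(J + \tau J')$, as claimed.

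The main obstacle, such as it is, is bookkeeping: correctly handling the chain rule in the $\tau$-expansion and, more importantly, recognizing that one need not compute $\delta\tau = D_p\tau(p,v)u + D_v\tau(p,v)w$ explicitly. The clean route is to avoid any formula for $D\tau$ altogether and instead exploit the implicit constraint $\bar p_\varepsilon\in\Gamma$, which forces the first-order displacement to be $N(\bar p)$-orthogonal; this converts the problem into the abstract statement that "adding the correct multiple of $v$ to land in $N(\bar p)^\perp$" is by definition the operator $P_{N(\bar p)}^v$. A minor point to verify is that $\langle v, N(\bar p)\rangle \neq 0$ so that $P_{N(\bar p)}^v$ is well-defined — this holds because the billiard domain is strictly convex and $(\bar p,\bar v)=f(p,v)$ with $\bar v\in V_{\bar p}$, forcing $v$ (which equals $R_{\bar p}\bar v$ up to sign issues, or more directly satisfies $\langle v, N(\bar p)\rangle > 0$ since the incoming ray $p + tv$ reaches $\bar p$ from inside $Q_\phi$). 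Once these points are in place the computation is a two-line chain of projection identities.
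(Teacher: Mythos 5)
Your proof is correct and rests on the same key observation as the paper's — namely that the constraint $\bar p_\varepsilon\in\Gamma$ forces the first-order displacement to lie in $N(\bar p)^\perp$. The paper uses this observation to solve explicitly for the $\OO(\varepsilon)$ term in the expansion of $\tau$ and then substitutes back, whereas you recognize that the unknown coefficient of $v$ is pinned down automatically by the characterizing property of $P_{N(\bar p)}^v$; this avoids the explicit $\tau$-expansion and is a mild streamlining of the same argument.
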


\begin{proof}

A convenient decomposition of $\tau$  is obtained by using the fact that tangent vectors at $(p,v)$ are mapped into tangent vectors at $(\bar p,\bar v)$, which are in $N(\bar p)^\perp$. 
That is, $\langle \bar{p}_\varepsilon-\bar p, N(\bar{p})\rangle=\OO(\varepsilon^2)$.
Therefore,
\begin{align*}
\langle \bar{p}_\varepsilon-p, N(\bar{p})\rangle 
&= \langle \bar{p}_\varepsilon-\bar{p}+\bar{p}-p, N(\bar{p})\rangle \\
&=\langle \bar{p}-p, N(\bar{p})\rangle + \OO(\varepsilon^2)\\
&= \tau(p,v)\langle v, N(\bar{p})\rangle + \OO(\varepsilon^2).
\end{align*}
On the other hand, from~\eqref{eqn: bar p eps} one gets
$$
\langle \bar{p}_\varepsilon-p, N(\bar{p})\rangle = 
\langle\varepsilon u+\tau(p+\varepsilon u, v+\varepsilon w )(v+\varepsilon w), N(\bar{p})\rangle 
+ \OO(\varepsilon^2).
$$
So,
\begin{align*}
\tau(p+\varepsilon u,v+\varepsilon w)  &= \frac{\tau(p,v)\langle v, N(\bar{p})\rangle -\varepsilon \langle u, N(\bar{p})\rangle}{\langle v+\varepsilon w, N(\bar{p})\rangle}  +  \OO(\varepsilon^2) \\
&=
\frac{\tau(p,v) -\varepsilon \frac{\langle u, N(\bar{p})\rangle}{\langle v, N(\bar{p})\rangle}}{1+ \varepsilon\frac{\langle w, N(\bar{p})\rangle}{\langle v, N(\bar{p})\rangle}} +  \OO(\varepsilon^2)\\
&=
\tau(p,v)-\varepsilon\left(\tau(p,v)\frac{\langle w, N(\bar{p})\rangle}{\langle v, N(\bar{p})\rangle}+ \frac{\langle u, N(\bar{p})\rangle}{\langle v, N(\bar{p})\rangle} \right)+\OO(\varepsilon^2).
\end{align*}
Finally, ~\eqref{eqn: bar p eps} can be written as
\begin{align*}
\bar{p}_\varepsilon &= p+\tau(p,v)v + \varepsilon\left( u-\frac{\langle u, N(\bar{p})\rangle}{\langle v, N(\bar{p})\rangle}v+\tau(p,v)\left(w-\frac{\langle w, N(\bar{p})\rangle}{\langle v, N(\bar{p})\rangle}v\right)\right) +\OO(\varepsilon^2)\\
&=\bar{p}+\varepsilon\left(P_{N(\bar{p})}^v u+\tau(p,v)P_{N(\bar{p})}^v w\right)+\OO(\varepsilon^2).
\end{align*}
The claim follows from the definitions of $u$ and $w$, as well as ~\eqref{eq: proj eta' proj eta}.
\end{proof}

\begin{lemma}
$$
\bar{v}_\varepsilon 
=\bar{v}+ \varepsilon\left[K R_{\bar{p}} J + \left(I+\tau(p,v)K\right)R_{\bar{p}} J'\right] +\OO(\varepsilon^2)
$$
where $K\colon \bar{v}^\perp\to \bar{v}^\perp$ is the self-adjoint linear map
$$
K = -2\langle \bar{v},N(\bar{p})\rangle (P_{N(\bar{p})}^{\bar{v}})^*L(\bar{p})P_{N(\bar p)}^{\bar v}.
$$
\end{lemma}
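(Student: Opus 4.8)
The plan is to expand \eqref{eqn: bar v eps} to first order in $\varepsilon$ and to recognise the resulting linear term with the help of Lemma~\ref{lemma: Rp proj} and the projection identities established above. Write $\tau=\tau(p,v)$ and set $\delta p=P_{N(\bar p)}^v(J+\tau J')$, so that by the preceding lemma $\bar p_\varepsilon=\bar p+\varepsilon\,\delta p+\OO(\varepsilon^2)$. Since $f$ maps $M$ into $M$, the curve $\varepsilon\mapsto\bar p_\varepsilon$ lies on $\Gamma$, hence its velocity $\delta p$ belongs to $T_{\bar p}\Gamma=N(\bar p)^\perp$ (this also follows directly from the definition of $P_{N(\bar p)}^v$); consequently, differentiating the unit normal field along this curve and using $L(\bar p)=-DN(\bar p)$,
$$
N(\bar p_\varepsilon)=N(\bar p)-\varepsilon\,L(\bar p)\,\delta p+\OO(\varepsilon^2).
$$
Substituting this together with $w=J'$ into \eqref{eqn: bar v eps}, expanding the inner product and collecting powers of $\varepsilon$, the zeroth-order term is $v-2\langle v,N(\bar p)\rangle N(\bar p)=R_{\bar p}v=\bar v$, while, using $R_{\bar p}J'=J'-2\langle J',N(\bar p)\rangle N(\bar p)$, the first-order term is
$$
\delta v=R_{\bar p}J'+2\langle v,N(\bar p)\rangle\,L(\bar p)\,\delta p+2\big\langle v,L(\bar p)\,\delta p\big\rangle\,N(\bar p).
$$

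It then remains to prove the identity
$$
2\langle v,N(\bar p)\rangle\,L(\bar p)P_{N(\bar p)}^v y+2\big\langle v,L(\bar p)P_{N(\bar p)}^v y\big\rangle\,N(\bar p)=K\,R_{\bar p}\,y,\qquad y\in v^\perp,
$$
since applying it with $y=J+\tau J'\in v^\perp$ turns the correction terms in $\delta v$ into $K R_{\bar p}(J+\tau J')=K R_{\bar p}J+\tau\,K R_{\bar p}J'$ (using linearity and $R_{\bar p}(v^\perp)=\bar v^\perp$ from Lemma~\ref{lemma: Rp proj}, which legitimises applying $K$), whence $\delta v=K R_{\bar p}J+(I+\tau K)R_{\bar p}J'$ as claimed. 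To prove the identity I would proceed in three steps. First, for $y\in v^\perp$ one has $P_{N(\bar p)}^{\bar v}R_{\bar p}y=P_{N(\bar p)}^v y$: by Lemma~\ref{lemma: Rp proj}, $R_{\bar p}y=P_{\bar v}\,P_{N(\bar p)}^v y$, and since $P_{N(\bar p)}^v y\in N(\bar p)^\perp$, \eqref{eq: proj eta v proj v} with $\eta=N(\bar p)$ and direction $\bar v$ gives $P_{N(\bar p)}^{\bar v}P_{\bar v}\,P_{N(\bar p)}^v y=P_{N(\bar p)}^v y$. Second, since $\langle\bar v,N(\bar p)\rangle=-\langle v,N(\bar p)\rangle$ by \eqref{eq: bar v-v} and $(P_{N(\bar p)}^{\bar v})^*=P_{\bar v}^{N(\bar p)}$, combining with the first step gives $K R_{\bar p}y=2\langle v,N(\bar p)\rangle\,P_{\bar v}^{N(\bar p)}z$, where $z:=L(\bar p)P_{N(\bar p)}^v y\in N(\bar p)^\perp$. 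Third, $P_{\bar v}^{N(\bar p)}z=z-\frac{\langle z,\bar v\rangle}{\langle N(\bar p),\bar v\rangle}N(\bar p)$, and here $\langle N(\bar p),\bar v\rangle=-\langle v,N(\bar p)\rangle$ while $\langle z,\bar v\rangle=\langle z,v\rangle$ because $z\perp N(\bar p)$ and $\bar v=v-2\langle v,N(\bar p)\rangle N(\bar p)$; substituting yields $2\langle v,N(\bar p)\rangle P_{\bar v}^{N(\bar p)}z=2\langle v,N(\bar p)\rangle z+2\langle v,z\rangle N(\bar p)$, which is exactly the left-hand side.

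Finally, $K$ is self-adjoint on $\bar v^\perp$ because $L(\bar p)$ is self-adjoint and $(P_{N(\bar p)}^{\bar v})^*=P_{\bar v}^{N(\bar p)}$, and the factorisation $K=-2\langle\bar v,N(\bar p)\rangle(P_{N(\bar p)}^{\bar v})^*L(\bar p)P_{N(\bar p)}^{\bar v}$ shows $K(\bar v^\perp)\subset\bar v^\perp$. I expect the main obstacle to be purely organisational: keeping track of which of the hyperplanes $v^\perp$, $\bar v^\perp$, $N(\bar p)^\perp$ each intermediate vector lies in, since this is exactly what makes Lemma~\ref{lemma: Rp proj}, \eqref{eq: proj eta v proj v}, \eqref{eq: bar v-v} and the adjoint relation applicable at the right moments; once the expansion above is set up, everything else is a routine first-order Taylor computation.
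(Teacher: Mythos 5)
Your proof is correct and follows essentially the same route as the paper: expand $N(\bar p_\varepsilon)$ to first order via $L(\bar p)=-DN(\bar p)$, substitute into the reflection formula, and then identify the first-order correction with $K R_{\bar p}$ using Lemma~\ref{lemma: Rp proj}, the projection identities, and $\langle\bar v,N(\bar p)\rangle=-\langle v,N(\bar p)\rangle$. The only cosmetic difference is that you verify the key identity by starting from $K R_{\bar p}y$ and unwinding it, whereas the paper starts from the expanded form and rewrites it into $K R_{\bar p}\zeta$; the ingredients and computations are the same.
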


\begin{proof}
We first expand $N(\bar{p}_\varepsilon)$ in powers of $\varepsilon$:
\begin{align*}
N(\bar{p}_\varepsilon)
& =N(\bar{p})+\varepsilon DN(\bar{p})\,P_{N(\bar{p})}^v(J+\tau(p,v)J')+\OO(\varepsilon^2).
\end{align*}
Substituting in ~\eqref{eqn: bar v eps} we obtain,
$$
\bar{v}_\varepsilon =\bar{v}+ \varepsilon\left[\hat{K}J + \left(R_{\bar{p}}+\tau(p,v)\hat{K}\right)J'\right] + \OO(\varepsilon^2),
$$
where $\hat K\colon v^\perp\to v^\perp$ is the linear map
$$
\hat K(\zeta)=  2\langle v,L(\bar{p})P_{N(\bar{p})}^v\zeta\rangle N(\bar{p}) +2\langle v,N(\bar{p})\rangle L(\bar{p})P_{N(\bar{p})}^v \zeta
$$
and $L(\bar p)=-DN(\bar{p})$.

By the fact that $L(\bar{p})P_{N(\bar{p})}^v$ maps tangent vectors to tangent vectors, we have
\begin{align*}
\langle v,L(\bar{p})P_{N(\bar{p})}^v\zeta \rangle N(\bar{p}) &= \langle \bar{v},L(\bar{p})P_{N(\bar{p})}^v\zeta \rangle N(\bar{p})\\
&=\langle \bar{v},N(\bar{p})\rangle \frac{\langle \bar{v},L(\bar{p})P_{N(\bar{p})}^v\zeta\rangle}{\langle \bar{v},N(\bar{p})\rangle} N(\bar{p})\\
&=\langle \bar v,N(\bar{p})\rangle\left(L(\bar{p})P_{N(\bar{p})}^v\zeta-P_{\bar{v}}^{N(\bar{p})}L(\bar{p})P_{N(\bar{p})}^v\zeta\right).
\end{align*}
Using~\eqref{eq: bar v-v}, ~\eqref{eq: proj eta v proj v} and Lemma~\ref{lemma: Rp proj},
\begin{align*}
\hat K &= -2\langle \bar{v},N(\bar{p})\rangle (P_{N(\bar{p})}^{\bar{v}})^*L(\bar{p})P_{N(\bar{p})}^v\\
&=-2\langle \bar{v},N(\bar{p})\rangle (P_{N(\bar{p})}^{\bar{v}})^*L(\bar{p})P_{N(\bar p)}^{\bar v} R_{\bar p} \\
&=KR_{\bar p}.
\end{align*}
\end{proof}

Denoting $(\tilde{p}_\varepsilon,\tilde{v}_\varepsilon)=\tilde{f}(p_\varepsilon,v_\varepsilon)$, by ~\eqref{eq: proj eta' proj eta} and Lemma~\ref{lemma: Rp proj} we conclude that
\begin{align}\label{eq: tilde f}
\tilde{p}_\varepsilon 
&=
\bar{p} +  \varepsilon P_{\bar{v}}\circ P_{N(\bar{p})}^v\left(J + \tau(p,v) J'\right) + \OO(\varepsilon^2)\\
&=
\bar{p} +  \varepsilon \left(R_{\bar{p}}J + \tau(p,v) R_{\bar{p}}J'\right) + \OO(\varepsilon^2),\\
\tilde{v}_\varepsilon 
&=
\bar{v}+ \varepsilon\left[K R_{\bar{p}} J + \left(I+\tau(p,v)K\right)R_{\bar{p}} J'\right] +\OO(\varepsilon^2).
\end{align}

%%%
\subsubsection{Derivative of the billiard map}

Recall that
$$
Df(p,v)=D\bar\Psi^{-1}(\bar p,\bar v)\, D\tilde f(p,v)\, D\Psi(p,v).
$$
From the above results we only need to compute the derivative of $\tilde f$.  Given $x=(p,v)\in M$ let $K(x)\colon v^\perp\to v^\perp$ be the self-adjoint linear map
\begin{equation}\label{definition of K}
K(x)w = -2\langle v,N(p)\rangle (P_{N(p)}^{v})^*L(p)P_{N(p)}^{v}w,\quad w\in v^\perp.
\end{equation}

\begin{lemma}\label{lemma derivative billiard map Jacobi coord}
$D\tilde{f}(x)\colon v^\perp\times v^\perp\to \bar{v}^\perp\times \bar{v}^\perp$
is given by
$$
D\tilde{f}(x)\,(J,J')=
\begin{bmatrix}
I&0\\
K(\bar{x})&I
\end{bmatrix}
\begin{bmatrix}
R_{\bar p}&0\\
0& R_{\bar p}
\end{bmatrix}
\begin{bmatrix}
I&\tau(x)I\\
0&I
\end{bmatrix}
\begin{bmatrix}
J\\ J'
\end{bmatrix}.
$$
\end{lemma}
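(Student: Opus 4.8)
The plan is to read the derivative straight off the $\varepsilon$-expansions already computed. The curve $\varepsilon\mapsto(p_\varepsilon,v_\varepsilon)=(p,v)+\varepsilon(J,J')+\OO(\varepsilon^2)$ in $\Psi(B)$ represents an arbitrary tangent vector $(J,J')\in v^\perp\times v^\perp=T_{(p,v)}\Psi(B)$, and by definition $D\tilde f(x)(J,J')$ is the coefficient of $\varepsilon$ in the expansion of $\tilde f(p_\varepsilon,v_\varepsilon)=(\tilde p_\varepsilon,\tilde v_\varepsilon)$. The two displays at the end of the preceding subsection give precisely
$$
\tilde p_\varepsilon=\bar p+\varepsilon\bigl(R_{\bar p}J+\tau(x)R_{\bar p}J'\bigr)+\OO(\varepsilon^2),\qquad
\tilde v_\varepsilon=\bar v+\varepsilon\bigl[K R_{\bar p}J+(I+\tau(x)K)R_{\bar p}J'\bigr]+\OO(\varepsilon^2),
$$
where $K\colon\bar v^\perp\to\bar v^\perp$ is the self-adjoint map of the lemma above it, which is exactly $K(\bar x)$ of~\eqref{definition of K} evaluated at $\bar x=(\bar p,\bar v)$. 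Hence
$$
D\tilde f(x)(J,J')=\bigl(R_{\bar p}J+\tau(x)R_{\bar p}J',\ K(\bar x)R_{\bar p}J+(I+\tau(x)K(\bar x))R_{\bar p}J'\bigr).
$$

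Next I would check that this equals the claimed product of three block matrices by multiplying from right to left: the upper-unipotent factor sends $(J,J')$ to $(J+\tau(x)J',J')$; the block-diagonal factor applies $R_{\bar p}$ to each component; and the lower-unipotent factor with off-diagonal block $K(\bar x)$ leaves the first component unchanged and adds $K(\bar x)R_{\bar p}(J+\tau(x)J')$ to the second. Collecting terms and using linearity of $R_{\bar p}$ and of $K(\bar x)$ reproduces the expression displayed above, so the two sides agree for all $(J,J')$.

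The only thing requiring care is the bookkeeping of domains: one notes that $R_{\bar p}$ maps $v^\perp$ isometrically onto $\bar v^\perp$ (Lemma~\ref{lemma: Rp proj}) and $K(\bar x)$ maps $\bar v^\perp$ into itself, so the composite is well defined as a map $v^\perp\times v^\perp\to\bar v^\perp\times\bar v^\perp$, consistent with the target of $D\tilde f(x)$. I do not expect a genuine obstacle here: the substantive computation was carried out in the two preceding lemmas, and this statement is merely their repackaging as a matrix factorization. The derivative $Df(p,v)$ of the billiard map itself then follows at once by conjugating with $D\Psi(p,v)=(P_v,\id)$ and $D\bar\Psi^{-1}(\bar p,\bar v)=(P^{\bar v}_{N(\bar p)},\id)$, as recalled just before the statement.
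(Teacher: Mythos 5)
Your proposal is correct and matches the paper's proof: both read off $D\tilde f(x)(J,J')$ as the coefficient of $\varepsilon$ in the expansions of $\tilde p_\varepsilon$ and $\tilde v_\varepsilon$ from the preceding two lemmas and equation \eqref{eq: tilde f}, then verify the block-matrix factorization by direct multiplication. The remarks on identifying $K$ with $K(\bar x)$ and on the domain/codomain bookkeeping via Lemma~\ref{lemma: Rp proj} are consistent with, and slightly more explicit than, the paper's terse argument.
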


\begin{proof}
It follows from~\eqref{eq: tilde f} that
\begin{eqnarray*}
D\tilde{f}(p,v)(J,J')
&=&
\frac{d}{d\varepsilon}\tilde{f}(p_\varepsilon,v_\varepsilon)|_{\varepsilon=0} \\
&=&
( R_{\bar{p}}J + \tau(p,v)  R_{\bar{p}}J',K R_{\bar{p}} J + \left(I+\tau(p,v)K\right)R_{\bar{p}} J').
\end{eqnarray*}
\end{proof}

\begin{remark}
Denote by $\Omega$ the canonical symplectic form on $\Rr^{2d+2}$. Recall that $M \subset  \Rr^{2d+2}$. It follows from Lemma~\ref{lemma derivative billiard map Jacobi coord}, that the billiard map $f$ is a symplectomorphism with respect to the symplectic form $$\omega = \Omega|_{M}.$$
\end{remark}

%%%%%%%%%%%%%%%%%%%%%%%%%%%%%%%%%%%%%%
\subsection{Generic properties concerning periodic orbits}
\label{sec: generic properties}

Given any billiard $\phi\in \emb 2$, 
we call 
a point $p\in M_\phi$ periodic if its period $m$ is $\geq2$.
All the points at the boundary of $M_\phi$ are the only fixed points, and are not periodic points according to our definition.

A periodic point $p$ is called hyperbolic if the eigenvalues of $Df_\phi^m(p)$ are all outside the unit circle.
It is $q$-elliptic (or simply elliptic) if $Df_\phi^m(p)$ has exactly $2q$ non-real eigenvalues with modulus 1 and $1\leq q\leq d$. 
In case $q=d$ it is called totally elliptic. 
When there are eigenvalues $\pm1$ it is called degenerate.
Finally, if all eigenvalues are $\pm1$ it is called parabolic.  

The number of periodic points is given by the following result.

\begin{theorem}[\cite{farber-2002,farber-tabachnikov-2002,karasev2009}]\label{thm: infinito}
If $\phi\in\emb \infty$, the number of periodic orbits is infinite.
\end{theorem}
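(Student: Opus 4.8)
The plan is to use the classical variational description of periodic billiard trajectories as critical points of the perimeter functional, exploiting primality to dispense with any careful counting. It suffices to exhibit, for every odd prime $p$, a closed billiard trajectory with exactly $p$ reflections: such a trajectory gives a point $x=(x_1,v_1)\in M$, where $v_1$ is the unit vector from the first vertex $x_1$ toward the second, with $f^p(x)=x$; since $Q_\phi$ is strictly convex one has $\langle v_1,N(x_1)\rangle>0$, so $x$ is not one of the boundary fixed points of $f$, whence the least period of $x$ — a divisor of $p$ that is $>1$ — equals $p$. Trajectories attached to distinct odd primes therefore yield disjoint $f$-orbits, and infinitely many primes give infinitely many periodic orbits. (The much sharper lower bounds for the number of $n$-periodic orbits in~\cite{farber-2002,farber-tabachnikov-2002,karasev2009} come from a $\Zz_n$-equivariant Lusternik--Schnirelmann argument on the cyclic configuration space of $\Gamma$; only the qualitative statement is needed here.)

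Fix an odd prime $p$ and set
$$
L\colon\Gamma^p\to\Rr,\qquad L(x_1,\dots,x_p)=\sum_{i\in\Zz/p\Zz}\|x_{i+1}-x_i\|,
$$
which is continuous on the compact manifold $\Gamma^p$ and smooth off the multidiagonal $\Delta=\{x:\ x_i=x_{i+1}\ \text{for some }i\}$. First I would record that a critical point $x^\ast$ of $L$ lying in $\Gamma^p\setminus\Delta$ is a billiard trajectory: writing $u_i=(x^\ast_{i-1}-x^\ast_i)/\|x^\ast_{i-1}-x^\ast_i\|$ and $w_i=(x^\ast_{i+1}-x^\ast_i)/\|x^\ast_{i+1}-x^\ast_i\|$, one has $\nabla_{x_i}L=-(u_i+w_i)$, so criticality on the submanifold $\Gamma^p$ means $u_i+w_i\perp T_{x^\ast_i}\Gamma$; then $\|u_i\|=\|w_i\|=1$ together with $\langle u_i,N(x^\ast_i)\rangle,\langle w_i,N(x^\ast_i)\rangle>0$ (strict convexity) force $\langle u_i,N\rangle=\langle w_i,N\rangle$, hence $w_i=-u_i+2\langle u_i,N(x^\ast_i)\rangle N(x^\ast_i)$, which is exactly the reflection law of Section~\ref{sec:coordinates} at the vertex $x^\ast_i$.

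To produce such a critical point I would take a maximizer $x^\ast$ of $L$ on $\Gamma^p$ (it exists by compactness) and show $x^\ast\notin\Delta$. If $x^\ast_i=x^\ast_{i+1}$ for some $i$, freeze every vertex except $y:=x_{i+1}$; then $L$ as a function of $y$ is $y\mapsto \mathrm{const}+\|y-x^\ast_i\|+\|y-x^\ast_{i+2}\|$, with value $\mathrm{const}+\|x^\ast_i-x^\ast_{i+2}\|$ at $y=x^\ast_i$, while for any $y\in\Gamma$ the triangle inequality gives $\|y-x^\ast_i\|+\|y-x^\ast_{i+2}\|\ge\|x^\ast_i-x^\ast_{i+2}\|$ with equality only on the segment $[x^\ast_i,x^\ast_{i+2}]$, which by strict convexity meets $\Gamma$ in at most its two endpoints. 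Since $d\ge1$ there is $y\in\Gamma$ off that segment (and $y\neq x^\ast_i$ when the two endpoints coincide), so $L$ can be strictly increased, contradicting maximality. Hence $x^\ast\in\Gamma^p\setminus\Delta$, where $L$ is smooth, so $dL(x^\ast)=0$ and $x^\ast$ is a $p$-periodic billiard trajectory by the previous step. The only genuine obstacle in this scheme is bookkeeping — guaranteeing that the trajectory obtained is of period exactly $p$ rather than a shorter orbit traversed several times, which is precisely why one runs the argument with $n=p$ prime — together with the mild strict-convexity estimate that rules out vertex collisions at the maximizer.
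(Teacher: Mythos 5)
The paper does not give its own proof of Theorem~\ref{thm: infinito}: it is quoted from \cite{farber-2002,farber-tabachnikov-2002,karasev2009}, where the result comes with sharp quantitative lower bounds for the number of $n$-periodic orbits, obtained via $\Zz_n$-equivariant Lusternik--Schnirelmann theory on the cyclic configuration space of $\Gamma_\phi$. Your proposal takes a genuinely different, more elementary route: it is the classical Birkhoff variational argument. For each odd prime $p$ you maximize the cyclic perimeter functional $L$ on $\Gamma_\phi^p$, rule out consecutive vertex coincidences at the maximizer by a strict-convexity/triangle-inequality perturbation (using that a chord of a strictly convex $\Gamma_\phi$ meets $\Gamma_\phi$ only at its endpoints), verify that a smooth critical point off the multidiagonal satisfies the reflection law, and use primality to force the lift $(x_1^*,v_1)\in M_\phi$ to have least period exactly $p$. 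The computations are correct: the projected Lagrange condition $u_i+w_i\perp T_{x_i^*}\Gamma_\phi$ together with $\|u_i\|=\|w_i\|=1$ and $\langle u_i,N\rangle,\langle w_i,N\rangle>0$ does force the specular law, and $x_1^*\neq x_2^*$ guarantees $\langle v_1,N(x_1^*)\rangle>0$, so the lift is a periodic point of period $\ge 2$ in the sense of Section~\ref{sec: generic properties} and not one of the boundary fixed points. What you lose relative to the cited references is the orbit-count multiplicity for a fixed $n$; what you gain is a short self-contained argument that suffices for the qualitative statement actually used in the paper. One phrasing you might tighten: you exclude only consecutive coincidences, so the maximizer may revisit non-adjacent reflection points, but this is harmless since primality already excludes any proper sub-period, which you correctly note.
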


Concerning generic billiards, more can be said about their periodic orbits.

\begin{theorem}[Petkov, Stojanov]\label{thm: generic}
For any $r\in\Nn\cup\{\infty\}$, there is a $C^r$-residual set $\RR\subset C_{emb}^{r}(S,\Rr^{d+1})$ such that the billiard map on $\phi\in\RR$ satisfies the following conditions:
\begin{enumerate}
\item
every periodic orbit passes only once through each of its reflection points, and any two different periodic orbits have no common reflection point~\cite{stojanov-ETDS-1987}.
\item
the spectrum of the derivative at any periodic point does not contain roots of unity~\cite{petkov-stojanov-mathZ-1987}.
\item
the number of periodic points with fixed finite period is finite~\cite{petkov-stojanov-AJM-1987,petkov-ETDS-1988}. 
\end{enumerate}
\end{theorem}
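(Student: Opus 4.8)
The plan is to produce, for each fixed finite period $m\ge2$, a $C^r$-residual subset of $\emb r$ on which the period-$m$ part of (1)--(3) holds, and then intersect over $m$; since $C_{emb}^r(S,\Rr^{d+1})$ and $\emb r$ are Baire spaces and $\emb r$ is $C^r$-open in $C_{emb}^r(S,\Rr^{d+1})$, this yields a residual set in the ambient space as well (only convex bodies being relevant, one may simply adjoin $C_{emb}^r(S,\Rr^{d+1})\setminus\overline{\emb r}$, or rerun the same perturbations, which never use convexity). A first observation is that (2) already contains (3): the derivative $Df_\phi^m$ at a period-$m$ point is a symplectic matrix (Remark after Lemma~\ref{lemma derivative billiard map Jacobi coord}), so if $1$ never lies in its spectrum then every period-$m$ point is a non-degenerate fixed point of $f_\phi^m$, hence isolated in $\operatorname{Fix}(f_\phi^m)$; since $M_\phi$ is compact, $\operatorname{Fix}(f_\phi^m)$ is then finite. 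So it suffices to treat (1) and (2). For the latter it is convenient to use the variational description of periodic orbits: for $\phi\in\emb r$ the primitive period-$m$ orbits correspond to the critical points of the length functional $\Lscr_\phi(p_1,\dots,p_m)=\sum_{i\in\Zz/m}\|p_{i+1}-p_i\|$ on the open set of configurations in $\Gamma_\phi^m$ with distinct consecutive points, the conjugacy class of $Df_\phi^m$ at such a point is built from the data $R_{p_i}$, $L_\phi(p_i)$, $\tau_i$ exactly as in Lemma~\ref{lemma derivative billiard map Jacobi coord}, non-degeneracy of the Hessian of $\Lscr_\phi$ at a critical point amounts to $1\notin\operatorname{spec}Df_\phi^m$, and a primitive $k$-th root of unity in $\operatorname{spec}Df_\phi^m$ corresponds to a degeneracy of the length functional on the $k$-fold configuration space at the $k$-fold iterate of the orbit.

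The engine is the localized normal perturbation of Lemma~\ref{lem:perturbing body}. A bump $\tilde\phi=\phi+\psi\cdot N(p_0)$ supported near a reflection point $p_0$, with $\psi(s_0)=0$ and $D\psi(s_0)=0$, leaves $p_0$, the tangent plane $T_{p_0}\Gamma_\phi$ and the inward normal $N(p_0)$ unchanged --- hence fixes every billiard segment through $p_0$ together with the reflection law there --- while adding the freely chosen small symmetric form $D^2(\psi\circ\phi^{-1})(p_0)$ to the second fundamental form at $p_0$, that is, perturbing the shape operator $L_\phi(p_0)$ by an essentially arbitrary small self-adjoint map; by Lemma~\ref{lemma derivative billiard map Jacobi coord} this perturbs the factor $K(\cdot)$ of $Df_\phi^m$ attached to $p_0$ arbitrarily, leaving the orbit, the flight times and the other reflections untouched. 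Dropping the condition $D\psi(s_0)=0$ instead displaces the orbit near $p_0$. Once property (1) is secured on a residual set --- so that, for such $\phi$, every period-$m$ orbit has $m$ pairwise distinct reflection points --- these perturbations at the various reflection points may be performed simultaneously and independently; this independence and controllability is precisely what the billiard Franks' lemma (Theorem~\ref{Franks}) and the billiard Klingenberg-Takens theorem (Theorem~\ref{thm: K-T}) organise. With that control the three properties follow by standard Kupka-Smale/Sard-type arguments. For (1): for each $m$ the set of bodies with a periodic orbit of period $\le m$ revisiting one of its reflection points, or with two distinct periodic orbits of period $\le m$ sharing a reflection point, is closed (periodic orbits of bounded period vary upper-semicontinuously and the coincidence is a closed condition) and has empty interior --- a localized bump with $D\psi(s_0)\ne0$ at the offending point pulls one orbit arc off it, using the independence of perturbations at distinct reflection points --- so its complement, intersected over $m$, is residual. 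For (2): fix $m$ and a root of unity $\zeta$; starting from any $\phi_0$, first choose the $K$-perturbations so that the Hessian of $\Lscr_\phi$ is non-degenerate at each of the finitely many relevant critical configurations (a Sard/transversality argument), so that finitely many period-$m$ points persist, and then a further localized perturbation at their reflection points moves each eigenvalue of $Df_\phi^m$ off $\zeta$ (a self-adjoint perturbation of $K$ moves the spectrum of the symplectic matrix $Df_\phi^m$, and a generic such perturbation avoids the finite set of forbidden values); intersecting over all $m$ and all countably many roots of unity $\zeta$ gives a residual set. Property (3) comes for free, and intersecting the residual sets for (1) and (2) finishes the argument, a countable intersection of residual sets in a Baire space being residual.

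The hard part is the transversality input behind (2) and the non-degeneracy step behind (3): one must verify that perturbing the shape operators $L_\phi(p_i)$ at the distinct reflection points --- which, through Lemma~\ref{lemma derivative billiard map Jacobi coord}, perturbs $Df_\phi^m$ only inside the restricted family of symplectic conjugations assembled from the matrices appearing there --- nevertheless acts richly enough on the $1$-jet of the return map to push every root-of-unity eigenvalue off the spectrum and to make every critical point of $\Lscr_\phi$ non-degenerate. This surjectivity-of-the-perturbation statement is exactly why the abstract Thom transversality theorem cannot be quoted directly --- the perturbation parameter is the body, not the map --- and why one is led to the billiard versions of Franks' and Klingenberg-Takens' theorems; proving those is the technical heart. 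A secondary but still delicate point is showing that the bad coincidence sets in (1) genuinely have empty interior, which again rests on the independence of localized normal perturbations performed at distinct reflection points.
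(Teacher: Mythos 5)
The paper offers no proof of this statement: it is imported as a known result of Petkov and Stojanov, with the three items sourced to four of their papers, and is thereafter used as a black box. So there is no internal argument of the paper's to compare your sketch against.

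Evaluated on its own terms, your sketch has the right flavour --- localized normal bumps via Lemma~\ref{lem:perturbing body}, the length functional, Sard/Kupka--Smale transversality --- but as written it is circular inside this paper's logical architecture. The proofs of both the billiard Franks' lemma (Theorem~\ref{Franks}) and the billiard Klingenberg--Takens theorem (Theorem~\ref{thm: K-T}) open by quoting Theorem~\ref{thm: generic} to construct the auxiliary residual sets $\YY_m$, so you cannot invoke either of those to supply the surjectivity-of-the-perturbation input that you yourself flag as the crux. What you may safely use are the pointwise, non-residual perturbation results --- Lemma~\ref{lem:perturbing body}, Lemma~\ref{lemma derivative billiard map Jacobi coord}, Lemma~\ref{lem:franks}, Lemma~\ref{lem:Franks generic}, Theorems~\ref{thm: K-T first}, \ref{thm:babyKT} and~\ref{thm:baby franks} --- which are independent of Theorem~\ref{thm: generic}; the residual-set bookkeeping then has to be rebuilt from scratch, which is essentially Petkov--Stojanov's work and is what your sketch leaves open. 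A secondary gap: your route from (2) to (3), ``nondegeneracy plus compactness of $M_\phi$ makes $\operatorname{Fix}(f_\phi^m)$ finite,'' is not literally correct, since $\partial M_\phi$ is a $(2d-1)$-dimensional set of fixed points of $f_\phi^m$ for every $m$; you also need that period-$m$ orbits with $m\geq2$ lie in a compact subset of $\sint M_\phi$ (uniformly over a $C^2$-neighbourhood of the body, to obtain the closedness/empty-interior claims in part (1)) before ``isolated implies finite'' can be invoked.
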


\section{Proof of Theorem~\ref{main thm}}\label{sec: proof}

We start by showing the result for the subset of smooth convex bodies that belong to $\emb\infty$.
The set 
$$
\{\phi\in\BB^\infty\colon \phi\text{ has a nontrivial hyperbolic basic set}\}
$$
is $C^2$-open by the strong structural stability of hyperbolic sets (see e.g. ~\cite[Theorem 18.2.1]{Katok}).
It remains to show that it is also $C^2$-dense.

Given $\phi\in \emb \infty$, we denote by $\Per_m(\phi)$ the set of periodic points of the billiard map $f_\phi$ with period $\geq m$, by $E(\phi)$ the subset of $q$-elliptic points, $1\leq q\leq d$, and by $H(\phi)$ the hyperbolic points.

Define the integer
$$
m_d:=4\left(\begin{smallmatrix}2d+3\\4\end{smallmatrix}\right)
$$
and the set of billiards with a $q$-elliptic point with period $\geq m_d$,
$$
\EE:=\{\phi\in \emb\infty\colon 
\Per_{m_d}(\phi)\cap E(\phi)\not=\emptyset\}.
$$
The restriction to large periods comes from the conditions of Theorem~\ref{thm: K-T}, needed to prove Theorem~\ref{thm: ell} below.

The following result implies that any billiard in $\EE$ is $C^\infty$-approximated by another one with a horseshoe, an example of a nontrivial hyperbolic basic set. 
The proof is in section~\ref{sec: ellipt}.

\begin{theorem}\label{thm: ell}
There is a $C^\infty$-residual set $\RR\subset\emb \infty$ such that any $\phi\in\RR\cap\EE$ has a horseshoe.
\end{theorem}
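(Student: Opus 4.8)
The plan is to build $\RR$ as a countable intersection of $C^\infty$-open dense subsets of $\emb\infty$ of three kinds, and then to show that membership in $\RR\cap\EE$ forces the classical mechanism ``weakly monotonous elliptic orbit $\Rightarrow$ Birkhoff--Lewis resonant orbits $\Rightarrow$ split separatrix $\Rightarrow$ horseshoe''. First I would intersect with the Petkov--Stojanov residual set of Theorem~\ref{thm: generic}, so that for $\phi\in\RR$ every periodic orbit visits each of its reflection points exactly once, distinct periodic orbits share no reflection point, and no return map has a root of unity in its spectrum. Next I would fix, once and for all, an invariant (under the natural conjugacy action) open dense subset $\GG$ of the space of $3$-jets at a fixed point of symplectic germs of $\Rr^{2d}$, chosen so that any germ whose $3$-jet lies in $\GG$ and whose linear part is elliptic is \emph{weakly monotonous} and has a \emph{non-degenerate, non-resonant Birkhoff normal form} (resonances up to the order needed for Birkhoff--Lewis excluded). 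Applying the billiards Klingenberg--Takens theorem (Theorem~\ref{thm: K-T}) to $\GG$ produces a $C^\infty$-residual set of bodies along which the $3$-jet of $f_\phi^{m}$ at every elliptic periodic point of period $m\ge m_d$ lies in $\GG$; the threshold $m_d$ is exactly the one required by that theorem, reflecting that one needs at least $m_d$ distinct reflection points (which the Petkov--Stojanov condition provides) in order to have, via Lemma~\ref{lem:perturbing body}, enough independent local deformations of the body to realise an arbitrary small perturbation of that jet.

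The third ingredient is a residual set encoding transverse separatrix splitting. For $\phi$ in the intersection of the two previous sets, any $q$-elliptic periodic point $x$ of period $m\ge m_d$ is weakly monotonous and non-degenerate; restricting, when $q<d$, to a (finitely smooth) center manifold of dimension $2q$ and invoking the higher-dimensional Birkhoff--Lewis / Moser twist theorem, one obtains in every neighbourhood of $x$ hyperbolic periodic orbits of $f_\phi$ arranged in resonant islands whose separatrices carry heteroclinic (respectively homoclinic) connections. I would then let $\RR_{\mathrm D}$ be the countable intersection, over $m\ge m_d$ and over the labels of these resonant families, of the sets of bodies for which the corresponding separatrix splits transversally (a condition vacuously satisfied when the relevant elliptic orbit or resonant family is absent). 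Each such set is $C^2$-open, because simple elliptic orbits, the hyperbolic orbits surrounding them, and transversality of an intersection of their invariant manifolds all persist under $C^2$-small perturbations; and each is $C^\infty$-dense, because given any body one may first perturb it into the Klingenberg--Takens set and then apply the multidimensional Donnay perturbation (Theorem~\ref{thm: Donnay}) to split one such separatrix transversally by an arbitrarily $C^\infty$-small deformation of the body. By the Baire property, the intersection $\RR$ of all three families is $C^\infty$-residual in $\emb\infty$.

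It then remains to read off the horseshoe. Given $\phi\in\RR\cap\EE$, the set $\EE$ supplies a $q$-elliptic periodic point $x$ of $f_\phi$ of period $m\ge m_d$; by the first two ingredients $x$ is weakly monotonous with non-degenerate non-resonant Birkhoff normal form, so the Birkhoff--Lewis orbits and their separatrices exist near $x$ and the defining condition of $\RR_{\mathrm D}$ is non-vacuous for $\phi$; hence, by membership in $\RR_{\mathrm D}$, at least one such separatrix splits transversally. A transverse homoclinic orbit (or, for a transverse heteroclinic cycle, its standard reduction) to a hyperbolic periodic orbit yields, by the Smale--Birkhoff homoclinic theorem, a compact $f_\phi$-invariant hyperbolic set on which a power of $f_\phi$ is conjugate to a full shift, that is, a horseshoe. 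Therefore every $\phi\in\RR\cap\EE$ has a horseshoe.

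I expect the genuine difficulties to be concentrated entirely in the two perturbation theorems used here as black boxes. The only admissible perturbations are $C^\infty$-small deformations of the convex body, which act non-locally on the phase space $M_\phi$ and must keep the body strictly convex, so neither an arbitrary jet perturbation of a return map (Klingenberg--Takens) nor a controlled splitting of a separatrix (Donnay) can be produced by a local change of the map; it is precisely in order to have enough independent body deformations — morally one per reflection point — that the period must exceed the combinatorial threshold $m_d$, and carrying out that count, together with the higher-dimensional bookkeeping of the Birkhoff--Lewis step when $q<d$, is where the real work lies.
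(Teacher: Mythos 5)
Your proposal correctly identifies the outer scaffolding (Petkov--Stojanov genericity, Klingenberg--Takens for weak monotonicity, Donnay to split a separatrix), but there is a genuine gap in the middle of the argument, and it is precisely the part where the paper's proof does its real work.

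You pass from a $4$-elementary weakly monotonous $q$-elliptic orbit to ``hyperbolic periodic orbits arranged in resonant islands whose separatrices carry heteroclinic connections'' by invoking a ``higher-dimensional Birkhoff--Lewis / Moser twist theorem.'' No such theorem is available in that form for $q>1$. Birkhoff--Lewis gives infinitely many periodic orbits accumulating on a non-degenerate elliptic orbit but says nothing about hyperbolicity or separatrices; the higher-dimensional Moser twist theorem gives invariant tori, not hyperbolic orbits with well-behaved stable and unstable manifolds. The picture of a resonance chain of alternately elliptic and hyperbolic orbits with separatrices between them is robust only in two dimensions, via Poincar\'e--Birkhoff together with generic twist-map theory. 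The paper's proof handles this by first applying the Contreras--Arnaud--Herman result (Theorem~\ref{th:contreras}, via the Birkhoff normal form and Moser's lemma in Section~\ref{sec: reduction to 1-ell}) to find a \emph{$1$-elliptic} periodic point near the $q$-elliptic one; restricting to the $2$-dimensional normally hyperbolic center manifold of that $1$-elliptic orbit, one has a genuine area-preserving twist map on an annulus, and only then do the $2$-dimensional tools apply. The dichotomy the paper then runs --- either there is no essential invariant curve with some rotation number, in which case Angenent's theorem gives positive entropy and Katok's theorem upgrades it to a horseshoe, or else invariant curves with rational rotation number are heteroclinic chains, in which case Donnay's Theorem~\ref{thm: Donnay} splits a transverse heteroclinic --- is what replaces your single-branch Birkhoff--Lewis step. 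Your proposal contains neither the dimensional reduction nor the Angenent/Katok branch, and without the reduction the Donnay perturbation has nothing to split.

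A secondary but real difficulty is the construction of $\RR_{\mathrm D}$. You intersect, over period $m$ and over ``labels of resonant families,'' sets of bodies for which ``the corresponding separatrix splits transversally,'' and you assert $C^2$-openness. But the hyperbolic orbits in a resonance zone near an elliptic orbit do not continue stably under perturbation in the way isolated hyperbolic orbits do: resonance zones are born, merge, and disappear, and there is no uniform combinatorial labelling of ``the'' separatrix that survives a small change of $\phi$. The paper avoids this bookkeeping entirely; its residuality comes from the Klingenberg--Takens set $\RR(\Sigma)$ and from Theorem~\ref{thm: generic}, while Donnay's lemma is only used at the end to produce a $C^\infty$-small deformation with a horseshoe, which together with $C^2$-openness of having a horseshoe is what is actually needed for the density part of Theorem~\ref{main thm}. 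So the place you expected the difficulty to live (the perturbation theorems) is not actually where it lives; the hard content is the reduction from $q$-elliptic to $1$-elliptic and the two-dimensional dichotomy that follows it.
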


Consider now the set of billiards  for which all its periodic points with large enough period are hyperbolic,
\begin{equation*}\label{estrelafraco}
\HH:=\{\phi\in \emb \infty \colon \Per_{m_d}(\phi) \subset H(\phi) \}
\end{equation*}
and its interior in the $C^2$-topology,
$$
\FF^2=\Int_{C^2}\HH.
$$
We show next that a $C^\infty$-generic billiard that is also in $\FF^2$ has a nontrivial hyperbolic basic set. This is proved in section~\ref{sec: hyp}.
The reason for the restriction to the $C^2$ topology is due to the use of our version for billiards in bodies of the Franks' Lemma in Theorem~\ref{Franks}.

\begin{theorem}\label{thm: hyp}
There is a $C^\infty$-residual set $\RR\subset\emb \infty$ such that 
the closure of $\Per_{m_d}(\phi)$ contains a nontrivial hyperbolic basic set for every $\phi\in \RR\cap \FF^2$.
\end{theorem}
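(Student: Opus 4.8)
The plan is to combine three ingredients: the Petkov--Stojanov genericity (Theorem~\ref{thm: generic}), the billiards version of Franks' lemma (Theorem~\ref{Franks}), and a general dynamical dichotomy for $C^1$-generic symplectomorphisms in the spirit of Newhouse, Ma\~n\'e and Arnaud. First I would fix the $C^\infty$-residual set $\RR$ to be the intersection of the Petkov--Stojanov residual set from Theorem~\ref{thm: generic} with a residual set obtained from a Kupka--Smale type argument, so that for $\phi\in\RR$ every periodic point of $f_\phi$ is either hyperbolic or elliptic with no eigenvalues that are roots of unity, the stable and unstable manifolds of hyperbolic periodic points are transverse, and the set of periodic points of each fixed period is finite. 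The key point is that this is a $C^\infty$-residual condition: the perturbation lemmas of the paper (Klingenberg--Takens, Lemma~\ref{lem:perturbing body} and Theorem~\ref{Franks}) allow one to run the usual Kupka--Smale machinery in the space $\emb\infty$ of bodies, and the Baire property of the $C^\infty$-topology gives residuality.

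Next I would exploit membership in $\FF^2=\Int_{C^2}\HH$. For $\phi\in\RR\cap\FF^2$, \emph{all} periodic points of period $\ge m_d$ are hyperbolic, and this persists under $C^2$-perturbations. Let $\Lambda$ be the closure of $\Per_{m_d}(\phi)$. The core of the argument is to upgrade "every periodic orbit in $\Lambda$ is hyperbolic and this is $C^2$-robust" to "$\Lambda$ contains a nontrivial hyperbolic basic set." This is where the billiards Franks' lemma (Theorem~\ref{Franks}) does the heavy lifting, exactly as Contreras' version does for geodesic flows: $C^2$-robust absence of non-hyperbolic periodic orbits forces a uniform hyperbolic splitting over the periodic orbits, because otherwise one could use Franks' lemma to rotate eigenvalues onto the unit circle along a long periodic orbit and create an elliptic (or parabolic) periodic point of large period by a small $C^2$-perturbation of the body, contradicting $\phi\in\FF^2$. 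Once a uniform dominated/hyperbolic splitting over $\overline{\Per_{m_d}(\phi)}$ is established, one invokes the fact that a locally maximal hyperbolic set containing infinitely many periodic points — together with transversality of invariant manifolds coming from $\RR$ — contains a transitive, infinite, locally maximal piece, i.e.\ a nontrivial hyperbolic basic set (one forms a horseshoe from a transverse homoclinic point of one of the hyperbolic periodic orbits, using Theorem~\ref{thm: infinito} to guarantee infinitely many such orbits in $\emb\infty$ and a heteroclinic/homoclinic closing argument).

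Concretely the steps are: (i) define $\RR$ as above and check $C^\infty$-residuality using the perturbation lemmas and the Baire property; (ii) for $\phi\in\RR\cap\FF^2$, show via Theorem~\ref{Franks} that the absence of elliptic/parabolic periodic points of period $\ge m_d$, being $C^2$-stable, yields a uniform lower bound on the hyperbolicity of $Df_\phi^m$ over periodic points of period $m\ge m_d$, hence a $Df_\phi$-invariant dominated splitting over $\overline{\Per_{m_d}(\phi)}$ which is in fact hyperbolic (using the symplectic structure to pair the exponents); (iii) use that $f_\phi$ has infinitely many periodic orbits (Theorem~\ref{thm: infinito}), pick a hyperbolic periodic orbit $O(x)$ inside this hyperbolic set, and use the Kupka--Smale transversality together with a closing/shadowing argument inside the hyperbolic set to produce a transverse homoclinic point of $O(x)$, whose associated horseshoe is the desired nontrivial hyperbolic basic set contained in the closure of $\Per_{m_d}(\phi)$. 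The main obstacle is step (ii): transferring the Franks'-lemma argument — which for diffeomorphisms is local in phase space but here requires $C^2$-perturbations of the body that are \emph{not} local — and controlling the interaction with the period threshold $m_d$ and with the non-locality of the perturbation along a long orbit; this is precisely the technical heart of section~\ref{sec: hyp} and relies essentially on the explicit Jacobi-coordinate form of $Df$ computed in Lemma~\ref{lemma derivative billiard map Jacobi coord}.
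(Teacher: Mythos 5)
Your steps (i) and (ii) essentially reproduce the paper's argument: the residual set is the intersection of the Franks-lemma residual set (Theorem~\ref{Franks}) with the Petkov--Stojanov residual set (Theorem~\ref{thm: generic}), and the $C^2$-stable absence of non-hyperbolic periodic points of period $\ge m_d$ is upgraded to uniform hyperbolicity of $\Lambda:=\overline{\Per_{m_d}(\phi)}$ by combining the Franks lemma with the symplectic Ma\~n\'e-type dichotomy for periodic linear cocycles from \cite[Theorem~8.1]{contreras-am-2010}; this is exactly Theorem~\ref{thm: lambda uf} in the paper.

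Step (iii), however, has a genuine gap and diverges from the paper. The paper finishes cleanly: $\Lambda$ is a compact hyperbolic set which is the closure of its periodic points and is infinite by Theorem~\ref{thm: infinito}; the Spectral Decomposition Theorem (\cite[p.~385]{Robinson}) writes $\Lambda$ as a finite disjoint union of basic hyperbolic sets, and infiniteness forces at least one of them to be nontrivial. You instead assume that $\RR$ incorporates a Kupka--Smale-type residual condition (transversality of stable and unstable manifolds of periodic orbits) and propose to manufacture a transverse homoclinic point by a closing/shadowing argument. Two problems. First, no Kupka--Smale transversality theorem for billiards in bodies is among the paper's genericity results (Theorems~\ref{thm: generic}, \ref{thm: K-T}, \ref{Franks}); Theorem~\ref{thm: Donnay} is a single targeted perturbation creating one transverse heteroclinic intersection, not a residuality statement, and establishing residual transversality for billiards would be a substantial separate task. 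Second, even granting transversality, passing from ``compact hyperbolic set with infinitely many periodic orbits'' to ``some periodic orbit has a homoclinic intersection'' is not automatic; it is precisely the structural information that the Spectral Decomposition Theorem delivers, and it does not follow from compactness and infiniteness alone. Replace step (iii) by the spectral-decomposition argument and drop the Kupka--Smale ingredient.
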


The remaining case, i.e. $\phi$ at the $C^2$-boundary of $\HH$, is reduced by a $ C^2$ perturbation to 
the case inside $\EE$. 
Notice that $\EE\cup\HH$ is $C^\infty$-residual (see Theorem~\ref{thm: generic}).
Thus, the above theorems imply that the set of billiards in $\emb\infty$ with a nontrivial hyperbolic basic set is $C^2$-dense. 
This completes the proof of Theorem~\ref{main thm} restricting to $\emb\infty$.

To deal with billiards in $\XX^\infty$, i.e. on smooth convex bodies, one only needs to notice that $\emb\infty$ is $C^2$ open and dense in $\XX^\infty$.

\section{Perturbing the $k$-jets: Klingenberg-Takens theorem for billiards}
\label{sec: KT}

%%%%%%%%%%%%%%%%%%%%%%%%%%%%%%%%%%%%%%

Let $U$ be a neighbourhood of $0\in\Rr^{m}$ and $f,g\colon U\to\Rr^{m}$ be two smooth maps fixing the origin, i.e. $f(0)=g(0)=0$.  Given $k\in\Nn$,  we say that $f$ and $g$ are $k$-equivalent if they have the same Taylor polynomial of degree $k$ at $0$.  The equivalence class of $f$ under this equivalence relation is the $k$-jet of $f$ at $0$ which we denote by $J^k_0f$ or simply $J^kf$.  We denote by $\JJ^{k}_s(n)$ the set of $k$-jets $J^kf$ of symplectomorphisms $f$ fixing the origin of $\Rr^{2n}$ with the canonical symplectic structure.  Notice that $\JJ^{k}_s(n)$ is a Lie group with the group operation
$$
(J^kf)\cdot (J^kg):=J^k( f\circ g).
$$
A subset $\Sigma\subset\JJ^{k}_s(n)$ is called \textit{invariant} if
$$
\sigma\cdot \Sigma \cdot \sigma^{-1} = \Sigma,\quad\forall\,\sigma\in \JJ^{k}_s(n).
$$

Let $x\in M_\phi$ be a periodic point of period $m\in\Nn$ of $f_\phi$.  Using Darboux coordinates about $x$,  we may assume that the $k$-jet of $f^m_\phi$ at $x$, which we denote by $J^k_x f_\phi^m$,  belongs to $\JJ^{k}_s(d)$.  Clearly,  if $\Sigma$ is invariant, then the property $J^k_x f_\phi^m\in \Sigma$ is independent of the coordinate system.

\begin{theorem}\label{thm: K-T first}
Let $\phi\in\BB^r$, $r\in\{2,3,\ldots,\infty\}$, and $\Sigma$ be an open,  dense and invariant subset of $ \JJ^{k}_s(d)$ with $k\in\Nn$. 
If $x\in M_\phi$ is a periodic point of $f_\phi$ with period $m\geq  4 {2d+k \choose k+1}$ whose periodic orbit $O(x)$ passes only once through each of its reflection points,  then there is a smooth $u\colon S\to\Rr^{d+1}$ with $C^\infty$-norm arbitrary small such that
\begin{enumerate}
\item
$\phi_u:=\phi+u\in\BB^r$, 
\item $O(x)$ is a periodic orbit of $f_{\phi_u}$, 
\item
$J^k_x f^m_{\phi_u}\in \Sigma$.
\end{enumerate}
\end{theorem}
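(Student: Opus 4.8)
The plan is to reduce the billiard version of the Klingenberg--Takens theorem to the standard Klingenberg--Takens perturbation statement for symplectomorphisms, using the perturbation Lemma~\ref{lem:perturbing body} to realize a prescribed correction to the $k$-jet of the Poincar\'e return map at a periodic point. Write $O(x)=\{x_1,\dots,x_m\}$ with $x_j=(p_j,v_j)$ and $x=x_1$, and set $f=f_\phi$. The key structural input is the formula for $Df$ in Jacobi coordinates (Lemma~\ref{lemma derivative billiard map Jacobi coord}): the derivative at a collision factors through the shape operator $L(p_j)$ via $K(x_j)$, and more generally the full $k$-jet of the one-step map at $x_j$ depends only on the $k$-jet of the boundary $\Gamma_\phi$ at $p_j$ (equivalently, on the Taylor expansion of $\ff_\phi$ and its derivatives at $p_j$ up to order $k-1$, together with the free flight data which is determined by the configuration of the $p_j$'s and is unchanged if the orbit is preserved). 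Since $O(x)$ passes only once through each reflection point, the points $p_1,\dots,p_m$ are distinct, so we may perturb the body independently near each $p_j$ using a function $\psi_j$ supported in a small neighborhood of $p_j$ with $\psi_j(s_j)=0$, $D\psi_j(s_j)=0$; by Lemma~\ref{lem:perturbing body}(3) this changes $\ff_\phi(p_j)$ by $D^2(\psi_j\circ\phi^{-1})(p_j)$, and by prescribing higher-order Taylor coefficients of $\psi_j$ we can prescribe, to arbitrary finite order, the germ of $\Gamma_\phi$ at $p_j$ while keeping the orbit $O(x)$ fixed (condition (2)) and staying in $\BB^r$ for small $\|\psi_j\|_{C^r}$ (condition (1), via Lemma~\ref{lem:perturbing body}(1)).

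The heart of the argument is then an algebraic surjectivity statement: the map sending the tuple of local jet-perturbations $(\psi_1,\dots,\psi_m)$ (modulo the normalization at each $s_j$) to the induced perturbation of the $k$-jet $J^k_x f^m_\phi\in\JJ^k_s(d)$ is, infinitesimally, onto the tangent space of $\JJ^k_s(d)$ — or at least onto enough of it that one can reach a dense open invariant set $\Sigma$. This is exactly where the hypothesis $m\geq 4\binom{2d+k}{k+1}$ enters: one needs enough independent collisions so that the composite $J^k_x f^m = J^k_{x_m}f\cdots J^k_{x_1}f$ can be steered arbitrarily within $\JJ^k_s(d)$. The standard Klingenberg--Takens mechanism is: (i) each one-step jet $J^k_{x_j}f$ can be perturbed within an affine subspace of $\JJ^k_s(d)$ determined by which Taylor coefficients of $\Gamma_\phi$ at $p_j$ are "free'' (the shape operator and its derivatives are free, the first fundamental form / flight times are constrained); (ii) the union over $j=1,\dots,m$ of the $\sigma_1\cdots\sigma_{j-1}\cdot(\text{that affine subspace})\cdot\sigma_{j+1}\cdots\sigma_m$ generates all of $\JJ^k_s(d)$ once $m$ is large enough, because conjugation by the $\sigma$'s moves the subspace around and the dimension count $\dim\JJ^k_s(d)=\binom{2d+k}{k+1}-1$ (roughly) is beaten by a factor of $4$. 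One then invokes that $\Sigma$ is open and dense and invokes a transversality/openness argument to land inside $\Sigma$ with an arbitrarily small perturbation.

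I would organize the proof as follows: first fix Darboux coordinates along $O(x)$ so that $J^k_{x_j}f_\phi$ lives in $\JJ^k_s(d)$; second, prove the \emph{one-step realization lemma} — for each $j$, the set of $k$-jets $J^k_{x_j}f_{\phi+\psi_j}$ achievable by admissible local perturbations $\psi_j$ near $p_j$ contains a neighborhood (within a fixed affine subspace through $J^k_{x_j}f_\phi$) whose "free directions'' correspond to the derivatives of the shape operator; this is a direct computation from the Jacobi-coordinate formula for $Df$ plus a standard argument that higher jets of $f$ near a collision are polynomial in the higher jets of $\ff_\phi$; third, the \emph{composition/spreading lemma} — show that composing these per-collision freedoms along $m\geq 4\binom{2d+k}{k+1}$ collisions yields a perturbation family whose image is dense in $\JJ^k_s(d)$ near $J^k_x f^m_\phi$; finally, combine with openness and density of $\Sigma$ and with Lemma~\ref{lem:perturbing body} to conclude. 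The main obstacle is the second and third steps together: precisely identifying which directions in $\JJ^k_s(d)$ a single collision can reach — the shape operator contributes only a "symmetric, rank-controlled'' family of jet perturbations, not arbitrary ones — and then proving that the conjugated sums of these restricted families fill out the whole jet group, which is the place where the explicit constant $4\binom{2d+k}{k+1}$ must be justified by a dimension/genericity count rather than a soft argument.
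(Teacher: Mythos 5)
Your high-level scheme is right (perturb the boundary near distinct reflection points, use Lemma~\ref{lem:perturbing body} to realize the perturbations, and show surjectivity of the induced jet-adjustment map), but there is a genuine gap at exactly the place you flagged as ``the main obstacle'': you do not supply the mechanism that makes the surjectivity statement true, and the heuristic dimension count you offer for the constant $4\binom{2d+k}{k+1}$ is wrong. The number $\ell=\binom{2d+k}{k+1}$ is $\dim\Rr_{k+1}[y,z]$, the space of homogeneous generating Hamiltonians of degree $k+1$, not (even roughly) $\dim\JJ^k_s(d)$; it equals the dimension of the single filtration level $\ker(\pi_k)\subset\JJ^k_s(d)$. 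And the factor $4$ is not a dimension estimate being beaten: it is the number of consecutive collisions consumed by one application of the Franks-type lemma (Theorem~\ref{Franks}/Lemma~\ref{lem:franks}), which the proof needs as a preliminary step and which your proposal does not mention at all.

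The actual mechanism, which your proposal is missing, is the Hamiltonian structure of the correction map and the resulting reduction of the surjectivity question to a polynomial-spanning condition. Proposition~\ref{prop:XH} shows that the one-collision correction $F_\varepsilon=f_\phi^{-2}\circ f_{\phi_\varepsilon}^2$ satisfies $\tfrac{d}{d\varepsilon}\big|_{0}F_\varepsilon=X_H$ with $H=h\circ f_\phi$ and $h(p,v)=2u(p)\langle v,N_\phi(p)\rangle$; so perturbing the body near $p_i$ produces, in the $n$-step correction, the Hamiltonian vector field of $H_i=h_i\circ f_\phi^i\circ\psi_0^{-1}$. Choosing each bump $u_i$ so that $J_0^{k+1}(u_i\circ\pi_1\circ\psi_i^{-1})=y_1^{k+1}$ forces $J_0^{k+1}H_i$ to be (up to a nonzero scalar) the monomial $y_1^{k+1}$ precomposed with the linearization $A_i=D(\psi_i\circ f_\phi^i\circ\psi_0^{-1})(0)$. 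The jet map $\SSS$ then lands in $\ker(\pi_k)$, and it is a submersion precisely when the polynomials $\{y_1^{k+1}\circ A_{n_j}\}$ span $\Rr_{k+1}[y,z]$ — the $k$-generality condition, which concerns only $\ell$ of the linear parts and is open and dense by the Carballo--Miranda result. This explains both why one can restrict to a special family of perturbations (you worried, correctly, that the shape operator only gives a ``rank-controlled'' family) and why the spanning holds: it is a genericity property of the tuple $(A_1,\dots,A_\ell)$ of derivatives, which is arranged first by a Franks-type perturbation along $\ell$ disjoint blocks of $4$ collisions each, giving $m\geq 4\ell$. Then Theorem~\ref{thm:babyKT} is applied order by order ($j=1,\dots,k$) to exhaust the filtration of $\JJ^k_s(d)$ and steer the $k$-jet into $\Sigma$. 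Without the Hamiltonian observation and the $k$-generality notion, the ``one-step realization lemma'' and ``composition/spreading lemma'' you sketch cannot be completed.
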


In the spirit of a theorem by Klingenberg and Takens~\cite{KT-1972}, we then show the following version for multidimensional billiards.

\begin{theorem}\label{thm: K-T}
For every open, dense and invariant $\Sigma\subset \JJ^{k}_s(d)$, $k\in\Nn$, there is a $C^\infty$-residual set $\RR=\RR(\Sigma)\subset \emb\infty$ such that for every $\phi\in\RR$ and any periodic point $x$ of $f_\phi$ of period $m\geq  4 {2d+k \choose k+1}$ we have $J^k_x f_\phi^m\in \Sigma$.
\end{theorem}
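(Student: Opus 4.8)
The plan is to derive Theorem~\ref{thm: K-T} from Theorem~\ref{thm: K-T first} by a standard Baire-category argument, with the crucial bookkeeping being control on the \emph{number} of periodic orbits of a given period so that we only need to kill a countable family of bad conditions. First I would fix the open, dense, invariant set $\Sigma\subset\JJ^{k}_s(d)$ and set $M_k:=4\binom{2d+k}{k+1}$. For each integer $m\ge M_k$ and each $n\in\Nn$ define
$$
\RR_{m,n}:=\left\{\phi\in\emb\infty:\text{every periodic point }x\text{ of }f_\phi\text{ with period exactly }m\text{ has }J^k_x f_\phi^m\in\Sigma,\ \text{or }\dist(x,\partial M_\phi)<1/n\right\},
$$
or more cleanly work with $\RR_{m}:=\{\phi\in\emb\infty:\ J^k_x f_\phi^m\in\Sigma\text{ for every period-}m\text{ point }x\}$. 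The target residual set is then $\RR:=\bigcap_{m\ge M_k}\RR_m$, intersected with the Petkov--Stojanov residual set $\Gscr$ from Theorem~\ref{thm: generic}. On $\Gscr$, every periodic orbit passes only once through each reflection point (condition (1)) and there are finitely many periodic points of each fixed period (condition (3)); this is exactly what is needed to apply Theorem~\ref{thm: K-T first} and to make the perturbations finitary.

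Next I would show each $\RR_m$ is $C^\infty$-open and dense, or rather that $\RR_m\cap\Gscr$ contains a $C^\infty$-open dense subset, which suffices for residuality of the intersection. \emph{Openness}: since $\Sigma$ is open in $\JJ^{k}_s(d)$, the condition $J^k_x f_\phi^m\in\Sigma$ is stable under $C^{k+1}$-small (hence $C^\infty$-small) perturbations of $\phi$ that preserve the period-$m$ orbit; combined with the fact that on $\Gscr$ period-$m$ points are isolated and (by the implicit function theorem, using that the billiard map is a $C^{r-1}$ symplectomorphism and the period-$m$ points are nondegenerate in the relevant sense — here one invokes condition (2) of Theorem~\ref{thm: generic} that no eigenvalue is a root of unity, so $1$ is not an eigenvalue of $Df^m$ and the periodic points persist and vary continuously) one gets that the full condition defining $\RR_m$ is $C^\infty$-open on $\Gscr$. \emph{Density}: given $\phi\in\Gscr$, enumerate its finitely many period-$m$ orbits $O(x_1),\dots,O(x_N)$; apply Theorem~\ref{thm: K-T first} successively to each $O(x_i)$ — legitimate because $m\ge M_k=4\binom{2d+k}{k+1}$ and each orbit passes only once through each of its reflection points — with perturbations $u_i$ supported near $O(x_i)$ and small enough not to spoil the finitely many conditions already achieved for $O(x_1),\dots,O(x_{i-1})$, nor to destroy the membership in $\emb\infty$. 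Here one must be slightly careful: Theorem~\ref{thm: K-T first}'s perturbation might in principle create or move other period-$m$ orbits, so I would either choose supports disjoint from all other period-$m$ orbits (possible since these are finitely many isolated points with orbits not sharing reflection points) or re-run the argument on the perturbed body; either way, after finitely many steps we reach $\phi'$ arbitrarily $C^\infty$-close to $\phi$ with $J^k_x f_{\phi'}^m\in\Sigma$ for all its period-$m$ points, and $\phi'$ can be taken in $\Gscr$ since $\Gscr$ is residual hence dense.

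Finally, since $\emb\infty$ is a Baire space in the $C^\infty$-topology, the countable intersection $\RR=\Gscr\cap\bigcap_{m\ge M_k}(\text{$C^\infty$-open dense subset of }\RR_m)$ is $C^\infty$-residual, and by construction every $\phi\in\RR$ and every periodic point $x$ of $f_\phi$ of period $m\ge M_k$ satisfies $J^k_x f_\phi^m\in\Sigma$, noting that period here means the exact minimal period and a point of period $m\ge M_k$ is a period-$m$ point in the sense used above. The main obstacle, I expect, is the density step's bookkeeping: ensuring that the successive applications of Theorem~\ref{thm: K-T first} to different orbits of the same period do not interfere — i.e. that one can localize the perturbations $u_i$ and control their effect on the other finitely many period-$m$ orbits and on the constraint $\phi_u\in\emb\infty$ — and, relatedly, handling the potential appearance of new period-$m$ orbits under perturbation. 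Using that $\phi$ lies in the Petkov--Stojanov residual set (finiteness and disjoint-reflection-point properties) is what makes this finitary and hence tractable; reducing to a single period $m$ at a time and then taking a countable intersection over $m$ is the mechanism that turns the finitary perturbation lemma into a residual statement.
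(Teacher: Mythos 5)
Your proposal is correct and follows essentially the same route as the paper: fix a period $m\geq 4\binom{2d+k}{k+1}$, use the Petkov--Stojanov residual set (Theorem~\ref{thm: generic}) to get finitely many isolated, nondegenerate period-$m$ orbits that pass once through each reflection point, show the jet condition defines a relatively $C^\infty$-open set (openness of $\Sigma$ plus persistence of nondegenerate orbits) that is $C^\infty$-dense by successive applications of Theorem~\ref{thm: K-T first}, and take the countable intersection over $m$. The paper phrases the period-$m$ slices as sets $\YY_m$ and $\RR_m$ built from the Petkov--Stojanov conditions rather than intersecting with a global $\Gscr$ first, and leaves the ``no new period-$m$ orbits appear'' point implicit in the nondegeneracy hypothesis, but these are only presentational differences from what you wrote.
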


We prove Theorems~\ref{thm: K-T first} and ~\ref{thm: K-T} at the end of this section.

\subsection{Perturbing the $k$-jet of the billiard map}

Let $\phi\in\BB^r$, $r\in\{2,3,\ldots,\infty\}$.  Consider an orbit segment $\gamma=\{x_0,x_1,x_2\}\subset M_\phi$ of $f_\phi$ where $\pi_1(\gamma)$ consists of exactly three points in $\Gamma_\phi$. On $p_1=\pi_1(x_1)$ we perform a perturbation 
\begin{equation}\label{eq:perturbation u}
\phi_\varepsilon = (\id+\varepsilon u N_\phi)\circ \phi 
\end{equation}
where $u\colon \Rr^{d+1}\to\Rr$ is a compactly supported $C^\infty$ function with support contained in a neighbourhood of $p_1$ not intersecting $\{p_0,p_2\}$,  $u(p_1)=0$ and $\nabla u(p_1)=0$.  

\begin{lemma}\label{lem:Nepsilon} If $|\varepsilon|$ is sufficiently small, then $\phi_\varepsilon\in \BB^{r}$. Moreover, 
$$
N_{\phi_\varepsilon}(p+\varepsilon u(p)N_\phi(p))=N_\phi(p)-\varepsilon P_{N_\phi(p)}(\nabla u(p)) + \OO(\varepsilon^2).
$$
\end{lemma}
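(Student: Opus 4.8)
The statement has two parts: (i) that $\phi_\varepsilon\in\BB^r$ for small $|\varepsilon|$, and (ii) the first-order expansion of the unit inward normal at the perturbed point. For part (i), the plan is to invoke Lemma~\ref{lem:perturbing body}, or rather its proof technique. The perturbation $\phi_\varepsilon=(\id+\varepsilon uN_\phi)\circ\phi$ differs from $\phi$ by a $C^\infty$ map whose $C^r$-norm is $\OO(\varepsilon)$ on the support of $u$; since $\emb r=\BB^r$ is $C^r$-open in $C^r(S,\Rr^{d+1})$, for $|\varepsilon|$ small enough $\phi_\varepsilon$ stays in $\BB^r$. The only subtlety is that $\phi_\varepsilon$ is written as a displacement along $N_\phi$ extended to a neighbourhood of $\Gamma_\phi$ in $\Rr^{d+1}$ (since $u$ is defined on $\Rr^{d+1}$), rather than the boundary-only version in Lemma~\ref{lem:perturbing body}; but on $\Gamma_\phi$ itself the two agree up to $\OO(\varepsilon^2)$ because $\nabla u(p_1)=0$ and $u(p_1)=0$, so positivity of the second fundamental form is preserved by the same computation as in Lemma~\ref{lem:perturbing body}(3).

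For part (ii), the plan is a direct differentiation. Write $p_\varepsilon := p+\varepsilon u(p)N_\phi(p)$ for the image under $\phi_\varepsilon\circ\phi^{-1}$ of the point $p\in\Gamma_\phi$, and let $\tilde N(\varepsilon):=N_{\phi_\varepsilon}(p_\varepsilon)$. The normal $\tilde N(\varepsilon)$ is characterized by two conditions: it is orthogonal to $T_{p_\varepsilon}\Gamma_{\phi_\varepsilon}$ and it has unit length (with the correct orientation). Parametrize $\Gamma_{\phi_\varepsilon}$ near $p_\varepsilon$ by $q\mapsto q+\varepsilon u(q)N_\phi(q)$ for $q\in\Gamma_\phi$ near $p$; its tangent space at $p_\varepsilon$ is spanned by vectors of the form $w+\varepsilon\big(\langle\nabla u(p),w\rangle N_\phi(p)+u(p)DN_\phi(p)w\big)+\OO(\varepsilon^2)$ for $w\in T_p\Gamma_\phi$. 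At $p=p_1$ the terms $u(p_1)=0$ and $\nabla u(p_1)=0$ kill the first-order correction, so the tangent space is unchanged to first order at $p_1$ — but for a general point $p$ in the support we keep the $\langle\nabla u(p),w\rangle N_\phi(p)$ term, which is what forces a nonzero normal variation. Writing $\tilde N(\varepsilon)=N_\phi(p)+\varepsilon \dot N+\OO(\varepsilon^2)$ and imposing $\langle\tilde N(\varepsilon),w+\varepsilon(\langle\nabla u(p),w\rangle N_\phi(p)+\cdots)\rangle=0$ for all $w\in T_p\Gamma_\phi$ gives, at order $\varepsilon$,
\begin{equation*}
\langle\dot N,w\rangle+\langle N_\phi(p),\langle\nabla u(p),w\rangle N_\phi(p)\rangle=0,
\end{equation*}
i.e. the tangential component of $\dot N$ is $-P_{N_\phi(p)}(\nabla u(p))$ paired against $w$; the normal component of $\dot N$ vanishes by differentiating $\|\tilde N(\varepsilon)\|^2=1$. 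Hence $\dot N=-P_{N_\phi(p)}(\nabla u(p))$, which is exactly the claimed formula.

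The main obstacle is purely bookkeeping: correctly computing the first-order variation of the tangent space of the perturbed hypersurface in terms of the extended function $u$ on $\Rr^{d+1}$ (as opposed to a function on $S$), and being careful that the relevant gradient $\nabla u(p)$ is the full Euclidean gradient in $\Rr^{d+1}$, so that $P_{N_\phi(p)}$ projects off its normal component — the tangential part of $\nabla u$ along $\Gamma_\phi$ is what drives the tilt of the normal, while at the base point $p_1$ everything degenerates. No deep idea is needed; the care is in the order of expansion and in checking that the error terms are genuinely $\OO(\varepsilon^2)$ uniformly on the (compact) support of $u$.
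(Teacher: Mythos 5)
Your proof is correct and follows essentially the same route as the paper: parametrize $\Gamma_{\phi_\varepsilon}$ by displacing points of $\Gamma_\phi$ along $N_\phi$, compute the first-order variation of the tangent vectors, use $\|N_{\phi_\varepsilon}\|=1$ to see that $\dot N$ is tangential, and pair against tangent vectors (you use a generic $w$ where the paper uses the coordinate basis $t_j$) to get $\dot N = -P_{N_\phi(p)}(\nabla u(p))$. The remark about the two perturbation forms ``agreeing up to $\OO(\varepsilon^2)$'' is only accurate at the basepoint $p_1$ and is not needed: $C^2$-openness of $\BB^r$ together with $\|\phi_\varepsilon-\phi\|_{C^2}=\OO(\varepsilon)$ already gives part (i), which is the argument the paper gestures at.
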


\begin{proof}
That $\phi_\varepsilon\in \BB^r$ for $|\varepsilon|$ small follows the same lines of the proof of Lemma~\ref{lem:perturbing body}.  Given $p\in \Gamma_\phi$,  let $p_\varepsilon = p+\varepsilon u(p)N(p)$ where $N=N_\phi$.  Notice that $p_\varepsilon\in\Gamma_{\phi_\varepsilon}$.  Denote by $\{t_j^\varepsilon(p_\varepsilon)\}$ the basis of $T_{p_\varepsilon}\Gamma_{\phi_\varepsilon}$ as defined in \eqref{eq: tg vectors}. Notice that $t_j(p)= t_j^0(p)$ and 
$$
t_j^\varepsilon(p_\varepsilon) =t_j(p) + \varepsilon (N(p)\nabla u(p)+ u(p)DN(p))t_j(p).
$$
Since $\langle N_{\phi_\varepsilon}(p_\varepsilon), N_{\phi_\varepsilon}(p_\varepsilon)\rangle =1$, we conclude that $w(p):=\left.\frac{d}{d\varepsilon}\right|_{\varepsilon=0} N_{\phi_\varepsilon}(p_\varepsilon) \in T_p\Gamma$ and $N_{\phi_\varepsilon}(p_\varepsilon)=N_\phi(p)+\varepsilon w(p) + \OO(\varepsilon^2)$.  Now we determine $w$.  Taking into account that $\langle N_{\phi_\varepsilon}(p_\varepsilon),   t_j^\varepsilon(p_\varepsilon)\rangle=0$ we get
\begin{align*}
0&=\left.\frac{d}{d\varepsilon}\right|_{\varepsilon=0} \langle N_{\phi_\varepsilon}(p_\varepsilon),   t_j^\varepsilon(p_\varepsilon)\rangle \\&= \langle w(p),   t_j(p)\rangle + \langle N(p),   (N(p)\nabla u(p)+ u(p)DN(p))t_j(p)\rangle\\
&=\langle w(p),   t_j(p)\rangle +\langle \nabla u(p),t_j(p)\rangle +u(p) \langle N(p),DN(p)t_j(p)\rangle\\
&=\langle w(p),   t_j(p)\rangle +\langle \nabla u(p),t_j(p)\rangle
\end{align*}
where the last term vanishes because $DN(p)t_j(p)\in T_p\Gamma$.  Hence, 
$$
\langle w(p),   v\rangle = \langle -\nabla u(p),v\rangle,\quad \forall\,v\in T_p\Gamma.
$$ 
This implies that
$$
w(p)=-\nabla u(p)+\langle \nabla u(p),N(p)\rangle N(p) = -P_{N(p)}(\nabla u(p)).
$$
\end{proof}

Denote by $B_\delta(x_0)\subset M_\phi$ an open ball around $x_0$ of radius $\delta>0$.  There are $0<\delta_0<\delta_1$ such that for every $|\varepsilon|$ sufficiently small,  the map $F_\varepsilon\colon B_{\delta_0}(x_0)\to B_{\delta_1}(x_0)$ defined by 
\begin{equation}\label{def:Rvarepsilon}
F_\varepsilon (x)=  f_\phi^{-2}\circ  f_{\phi_\varepsilon}^2(x),
\end{equation}
is a diffeomorphism onto its image.  Notice that $F_0=\id$ and $F_\varepsilon(x_0)=x_0$.  Let $V = f_{\phi}(B_{\delta_0}(x_0))\subset M_\phi$.

\begin{proposition}\label{prop:chi}
$ F_\varepsilon = \id + \varepsilon f_\phi^*\chi+\OO(\varepsilon^2) $ where $\chi\colon V\to TM_\phi$ is the vector field $\chi(p,v)=(\chi_1(p,v),\chi_2(p,v))$ given by
\begin{align*}
\chi_1(p,v)&=-\frac{2u(p)}{\langle v,N_\phi(p)\rangle}P_{N_\phi(p)}v,\\
\chi_2(p,v)&=2u(p)P_v^{N_\phi(p)}L(p)P_{N_{\phi(p)}}v- 2\langle v,N_\phi(p)\rangle P_v^{N_\phi(p)}P_{N_\phi(p)}\nabla u(p).
\end{align*}
\end{proposition}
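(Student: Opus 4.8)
The plan is to localise the perturbation to the single reflection near $p_1$, rewrite $F_\varepsilon$ through the first-order deformation of that reflection, and transport the deformation back by the unperturbed billiard map.

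First I would record the localisation. Since $\supp u$ is contained in a neighbourhood of $p_1$ disjoint from $\{p_0,p_2\}$, the hypersurfaces $\Gamma_\phi$ and $\Gamma_{\phi_\varepsilon}$ coincide near $p_0$ and near $p_2$, so $M_\phi$ and $M_{\phi_\varepsilon}$ are canonically identified on neighbourhoods of $x_0$ and of $x_2$. For $x=(p,v)$ near $x_0$, the $\phi_\varepsilon$-trajectory of $x$ first meets the boundary at a point $\bar p_\varepsilon=p+t_\varepsilon v$ (with $t_\varepsilon>0$) on the perturbed arc near $p_1$, reflects there using $N_{\phi_\varepsilon}$, and then --- as $Q_{\phi_\varepsilon}$ is strictly convex by Lemma~\ref{lem:Nepsilon} --- runs along a chord that cannot re-enter $\supp u$ before hitting $\Gamma_{\phi_\varepsilon}=\Gamma_\phi$ near $p_2$. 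Hence $f^2_{\phi_\varepsilon}(x)\in M_\phi$ lies near $x_2$, and all $\OO(\varepsilon^2)$-estimates below are uniform on $B_{\delta_0}(x_0)$.

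Next I would reduce to a first-order expansion. Write $y=(\bar p,\bar v)=f_\phi(x)\in V$ and let $(\bar p_\varepsilon,\bar v_\varepsilon)\in M_{\phi_\varepsilon}$ be the post-collision state at $p_1$. Sliding its base point along the line $\Rr\bar v_\varepsilon$ until it meets $\Gamma_\phi$ near $p_1$ produces $q'_\varepsilon\in\Gamma_\phi$ with $q'_0=\bar p$, and the state $y'_\varepsilon:=(q'_\varepsilon,\bar v_\varepsilon)\in M_\phi$ lies on the same oriented line as $(\bar p_\varepsilon,\bar v_\varepsilon)$; since this line leaves the arc near $p_1$ at once (strict convexity again), the next $\phi$-collision of $y'_\varepsilon$ is the second $\phi_\varepsilon$-collision of $x$, i.e.\ $f_\phi(y'_\varepsilon)=f^2_{\phi_\varepsilon}(x)$ with $y'_0=y$. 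Granting $\dot y'_0:=\frac{d}{d\varepsilon}\big|_{0}y'_\varepsilon=\chi(y)$, Taylor expansion gives $f^2_{\phi_\varepsilon}(x)=f_\phi^2(x)+\varepsilon\,Df_\phi(y)\chi(y)+\OO(\varepsilon^2)$, and composing with $f_\phi^{-2}$, together with $Df_\phi^{-2}(f_\phi^2 x)\,Df_\phi(f_\phi x)=Df_\phi(x)^{-1}$, yields exactly $F_\varepsilon=\id+\varepsilon\, f_\phi^*\chi+\OO(\varepsilon^2)$.

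It then remains to compute $\dot y'_0=(\dot q'_0,\dot{\bar v}_0)$, which is a direct first-order calculation. Solving the implicit relation $\bar p_\varepsilon=p+t_\varepsilon v\in\Gamma_{\phi_\varepsilon}$ gives $\dot{\bar p}_0=\frac{u(\bar p)}{\langle v,N(\bar p)\rangle}v$; writing $\bar p_\varepsilon=c_\varepsilon+\varepsilon u(c_\varepsilon)N(c_\varepsilon)$ with $c_\varepsilon\in\Gamma_\phi$, $c_0=\bar p$, and invoking Lemma~\ref{lem:Nepsilon} and $L=-DN$ gives
\[
N_{\phi_\varepsilon}(\bar p_\varepsilon)=N(\bar p)+\varepsilon\Big(-\tfrac{u(\bar p)}{\langle v,N(\bar p)\rangle}L(\bar p)P_{N(\bar p)}v-P_{N(\bar p)}\nabla u(\bar p)\Big)+\OO(\varepsilon^2);
\]
differentiating the reflection law $\bar v_\varepsilon=v-2\langle v,N_{\phi_\varepsilon}(\bar p_\varepsilon)\rangle N_{\phi_\varepsilon}(\bar p_\varepsilon)$ then yields $\dot{\bar v}_0$, and sliding the base point back onto $\Gamma_\phi$ adds a tangential term $\dot s_0\,\bar v$ with $\dot s_0=\frac{u(\bar p)}{\langle v,N(\bar p)\rangle}$, so that $\dot q'_0=\frac{u(\bar p)}{\langle v,N(\bar p)\rangle}(v+\bar v)$. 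Simplifying with $\langle\bar v,N(\bar p)\rangle=-\langle v,N(\bar p)\rangle$, $v+\bar v=2P_{N(\bar p)}v$ and $P_{N(\bar p)}\bar v=P_{N(\bar p)}v$ (cf.\ \eqref{eq: bar v-v}), and using \eqref{eq: proj eta' proj eta}--\eqref{eq: proj eta v proj v} to convert the orthogonal projections (followed by $N(\bar p)$-corrections) into the maps $P^{N(\bar p)}_{\bar v}$, one checks $(\dot q'_0,\dot{\bar v}_0)=(\chi_1(y),\chi_2(y))$. I expect the main obstacle to be precisely this last bookkeeping --- correctly propagating the unit normal through Lemma~\ref{lem:Nepsilon} at the moving, off-$\Gamma_\phi$ point $\bar p_\varepsilon$ and matching the outcome with the $P^{N(p)}_{v}$-projections in the formula for $\chi_2$ --- whereas the support condition on $u$ and strict convexity enter only in the two conceptual steps above (the localisation, and the ray-sliding identity $f_\phi(y'_\varepsilon)=f^2_{\phi_\varepsilon}(x)$).
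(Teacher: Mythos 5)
Your proposal is correct and follows essentially the same route as the paper: localise the perturbation to the single reflection near $p_1$, express the deformation via an auxiliary map that ``slides'' the perturbed post-collision state along its own ray back onto $\Gamma_\phi$, and Taylor-expand in $\varepsilon$ using the implicit equation for the hit point, Lemma~\ref{lem:Nepsilon} for the normal, and the projection identities \eqref{eq: proj eta' proj eta}--\eqref{eq: proj eta v proj v}. The only cosmetic difference is packaging: the paper factors the correction as $G_\varepsilon = h_\varepsilon\circ g_\varepsilon$ (two half-steps, one from $\Gamma_\phi$ to $\Gamma_{\phi_\varepsilon}$, one back) and then proves $F_\varepsilon = f_\phi^{-1}\circ G_\varepsilon\circ f_\phi$, whereas you work with $f_{\phi_\varepsilon}(x)$ directly and perform the slide-back in one step; the resulting state $y'_\varepsilon$ coincides with the paper's $G_\varepsilon(f_\phi(x))$, and the first-order expansions match after translating between the incoming-direction bookkeeping you use and the outgoing-direction bookkeeping in the paper (e.g.\ $\langle\bar v,N\rangle=-\langle v,N\rangle$ and $P_N\bar v=P_N v$).
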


\begin{proof}
Given $(p,v)\in V$ let $\bar{v}$ be the reflection of $v$ about the hyperplane perpendicular to $N_\phi(p)$ and let $\ell(p,\bar{v})$ be the line passing through $p$ in the direction of $\bar{v}$.  There is a unique $\bar{p}\in \Gamma_{\phi_\varepsilon}$ which is the first, i.e., closest to $p$,  intersection point of $\Gamma_{\phi_\varepsilon}$ with the line $\ell(p,\bar v)$.  Notice that $\bar{p}$ may be equal to $p$ which will certainly be the case whenever $p\in \Gamma_{\phi}\cap \Gamma_{\phi_\varepsilon}$.  
We define the map $g_\varepsilon\colon V\to M_{\phi_\varepsilon}$ as $(p,v)\mapsto (\bar{p},-\bar{v})$. Notice that $(\bar{p},-\bar{v})\in M_{\phi_\varepsilon}$. Indeed,  by continuity we have $\langle\bar{v},N_{\phi_\varepsilon}(\bar{p})\rangle <0$, since $N_{\phi_\varepsilon}(\bar{p})$ is $\varepsilon$-close to $N_{\phi}(p)$ and $\langle\bar{v},N_{\phi}(p)\rangle <0$. 
Moreover,  $g_\varepsilon$ is a local diffeomorphism at $x_1$.  
Similarly,  let $V_\varepsilon = g_\varepsilon(V)\subset M_{\phi_\varepsilon}$ and given $(p,v)\in V_\varepsilon$ let $\bar{p}$ be the first intersection point of $\Gamma_{\phi}$ with the line $\ell(p,\bar{v})$ where now $\bar{v}$ is the reflection of $v$ about the hyperplane perpendicular to $N_{\phi_\varepsilon}(p)$.  The map $h_\varepsilon\colon V_\varepsilon\to M_\phi$ is defined by $(p,v)\mapsto (\bar{p},-\bar{v})$. As before,  $(\bar{p},-\bar{v})\in M_\phi$ and $h_\varepsilon$ is a local diffeomorphism at $x_1$.  Finally, define $G_\varepsilon \colon V\to M_\phi$ by $G_\varepsilon := h_\varepsilon\circ g_\varepsilon$.   
It is not difficult to see that $G_0=\id$, $G_\varepsilon(x_1)=x_1$ and 
$$F_\varepsilon= f_\phi^{-1}\circ  G_\varepsilon\circ f_\phi. $$
Now we expand $G_\varepsilon$ in leading order of $\varepsilon$.  Starting with $g_\varepsilon$,  let $(\bar{p}_\varepsilon,-\bar{v}) = g_\varepsilon(p,v)$. Clearly, 
$$
\bar{p}_\varepsilon = p + \tau_\varepsilon \bar{v}\quad\text{and}\quad \bar{v}= R_{N_\phi(p)} v
$$
where $\tau_\varepsilon = \langle \bar{p}_\varepsilon-p,\bar{v}\rangle$.  Since $\bar{p}_\varepsilon\to p$ as $\varepsilon\to 0$, we have 
$$
\tau_\varepsilon = \varepsilon\bar{\tau} + \OO(\varepsilon^2)
$$ 
for some function $\bar{\tau}=\bar{\tau}(p,\bar{v})$ that we now determine. As $\bar{p}_\varepsilon\in \Gamma_{\phi_\varepsilon}$, there is $s_\varepsilon\in S$ such that $s_\varepsilon\to \phi^{-1}(p)$ as $\varepsilon\to 0$ and
$$
\bar{p}_\varepsilon = \phi(s_\varepsilon)+\varepsilon u \circ\phi(s_\varepsilon) N_\phi\circ\phi(s_\varepsilon).
$$
Therefore,
$$
\phi(s_\varepsilon)-p = \varepsilon(\bar{\tau}\bar{v}-u(p)N_\phi(p))+\OO(\varepsilon^2),
$$
which implies that $\bar{\tau}\bar{v}-u(p)N_\phi(p)\in T_p\Gamma_\phi$.  Equivalently,  $$\langle \bar{\tau}\bar{v}-u(p)N_\phi(p), N_\phi(p)\rangle = 0,$$ which gives
$$
\bar{\tau}=\frac{u(p)}{\langle \bar{v},N_\phi(p)\rangle}.
$$
Putting all together,
\begin{align*}
\bar{p}_\varepsilon &= p + \varepsilon\frac{u(p)}{\langle \bar{v},N_\phi(p)\rangle} \bar{v} + \OO(\varepsilon^2)\\
&=p - \varepsilon\frac{u(p)}{\langle v,N_\phi(p)\rangle}R_{N_\phi(p)}v + \OO(\varepsilon^2).
\end{align*}
Next, we expand $(\tilde{p}_\varepsilon,\tilde{v}_\varepsilon)=h_\varepsilon(\bar{p}_\varepsilon,-\bar{v})$ in leading order of $\varepsilon$. First,  notice that defining $w=w(p,v):=\bar{\tau}\bar{v}-u(p)N_\phi(p)$ we have
\begin{align*}
N_{\phi_\varepsilon}(\bar{p}_\varepsilon)&=N_{\phi_\varepsilon}(p+\varepsilon u(p)N_\phi(p) + \varepsilon w + \OO(\varepsilon^2))\\
&=N_{\phi_\varepsilon}(p+\varepsilon u(p)N_\phi(p))+ \varepsilon DN_
\phi(p)w+\OO(\varepsilon^2)\\
&=N_{\phi}(p)+ \varepsilon( DN_
\phi(p)w-P_{N_\phi(p)}\nabla u(p) )+\OO(\varepsilon^2),
\end{align*}
by Lemma~\ref{lem:Nepsilon}.  Let
$$
\Theta(p,v):= DN_
\phi(p)w-P_{N_\phi(p)}\nabla u(p).
$$
Notice that $\Theta(p,v)\in T_p\Gamma$.
 Expanding $\tilde{v}_\varepsilon = R_{N_{\phi_\varepsilon}(\bar{p}_\varepsilon)}\bar{v}$ in powers of $\varepsilon$ we get
\begin{align*}
\tilde{v}_\varepsilon &= \bar{v}-2\langle \bar{v},N_{\phi_\varepsilon}(\bar{p}_\varepsilon)\rangle N_{\phi_\varepsilon}(\bar{p}_\varepsilon)\\
&=R_{N_\phi(p)}\bar{v} -2 \varepsilon(\langle \bar{v}, \Theta(p,v) \rangle N_\phi(p) + \langle \bar{v},  N_\phi(p)\rangle\Theta(p,v)) + \OO(\varepsilon^2).
\end{align*}
Taking into account that $\bar{v}=R_{N_\phi(p)}v$ and $$\langle \bar{v}, \Theta(p,v) \rangle N_\phi(p)  = \langle v,N_\phi(p)\rangle (\Theta(p,v)-P_v^{N_\phi(p)}\Theta(p,v)),$$ we get
$$
\tilde{v}_\varepsilon = v +2\varepsilon \langle v,N_\phi(p)\rangle P_v^{N_\phi(p)}\Theta(p,v) + \OO(\varepsilon^2).
$$
Now, by a direct computation we have
$$
w(p,v)=-\frac{u(p)}{\langle v, N_\phi(p)\rangle} P_{N_\phi(p)} v,
$$
and
$$
P_v^{N_\phi(p)}\Theta(p,v)=-P_v^{N_\phi(p)}P_{N_\phi(p)}\nabla u(p)+\frac{u(p)}{\langle v, N_\phi(p)\rangle}P_v^{N_\phi(p)}L(p)P_{N_{\phi(p)}}v.
$$

Putting all together,
$$
\tilde{v}_\varepsilon = v +2 \varepsilon \left(u(p)P_v^{N_\phi(p)}L(p)P_{N_{\phi(p)}}v- \langle v,N_\phi(p)\rangle P_v^{N_\phi(p)}\nabla u(p)\right)+\OO(\varepsilon^2).
$$
Finally,  we expand $\tilde{p}_\varepsilon$ in leading order of $\varepsilon$. First,  notice that $\langle \tilde{p}_\varepsilon-\bar{p}_\varepsilon, \tilde{v}_\varepsilon\rangle = \tilde{\tau}\varepsilon + \OO(\varepsilon^2)$   where $ \tilde{\tau} = \tilde{\tau}(p,v)$ is a function to be determined.  Hence,
\begin{align*}
\tilde{p}_\varepsilon &= \bar{p}_\varepsilon + \varepsilon \tilde{\tau}v + \OO(\varepsilon^2)\\
&=  \phi(s_\varepsilon)+\varepsilon u \circ\phi(s_\varepsilon) N_\phi\circ\phi(s_\varepsilon) + \varepsilon\tilde{\tau}v + \OO(\varepsilon^2)\\
&=\phi(s_\varepsilon)+\varepsilon (u (p) N_\phi(p) + \tilde{\tau}v) + \OO(\varepsilon^2)\\
&=p+\varepsilon(u (p) N_\phi(p) + \tilde{\tau}v + w)\varepsilon+\OO(\varepsilon^2).
\end{align*}
Since  $\tilde{p}_\varepsilon\in\Gamma$ and converges to $p$ as $\varepsilon\to 0$, we conclude that $u (p) N_\phi(p) + \tilde{\tau}v + w\in T_p \Gamma$, thus
$$
\tilde{\tau}=-\frac{u(p)}{\langle v,N_\phi(p)\rangle}.
$$
Therefore,
\begin{align*}
\tilde{p}_\varepsilon &= \bar{p}_\varepsilon -\varepsilon\frac{u(p)}{\langle v,N_\phi(p)\rangle}v + \OO(\varepsilon^2)\\
&=p-\varepsilon \frac{u(p)}{\langle v,N_\phi(p)\rangle}(v+R_{N_\phi(p)}v)+\OO(\varepsilon^2)\\
&=p-\varepsilon \frac{2u(p)}{\langle v,N_\phi(p)\rangle}P_{N_\phi(p)}v+\OO(\varepsilon^2).
\end{align*}
\end{proof}

\begin{proposition}\label{prop:XH}
$$
\left.\frac{d}{d\varepsilon}\right|_{\varepsilon=0} F_\varepsilon = X_H
$$
where $X_H$ is the Hamiltonian vector field of the Hamiltonian 
\begin{equation}\label{eq:G}
H=h\circ f_\phi,\quad h(p,v)=2u(p)\langle v,N_\phi(p)\rangle
\end{equation}
with respect to the symplectic form $\omega$.
\end{proposition}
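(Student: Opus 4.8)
The plan is to deduce the identity from Proposition~\ref{prop:chi} combined with the symplecticity of $f_\phi$, reducing everything to a computation with the projection operators of Section~\ref{sec:coordinates}. By Proposition~\ref{prop:chi} we already have $\frac{d}{d\varepsilon}\big|_{\varepsilon=0}F_\varepsilon = f_\phi^*\chi$, where $\chi=(\chi_1,\chi_2)$ is the vector field on $V$ given there. So it is enough to establish two facts: (i) on $V$, the vector field $\chi$ is the Hamiltonian vector field $X_h$ of $h(p,v)=2u(p)\langle v,N_\phi(p)\rangle$ with respect to $\omega$; and (ii) for any symplectomorphism $g$ of $(M,\omega)$ and any smooth $k$ one has $g^*X_k = X_{k\circ g}$. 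Given these, we apply (ii) with $g=f_\phi$, which is symplectic by the Remark following Lemma~\ref{lemma derivative billiard map Jacobi coord}, and obtain $f_\phi^*\chi = f_\phi^*X_h = X_{h\circ f_\phi}=X_H$, which is the assertion. Fact (ii) is the standard identity $\iota_{g^*X_k}\omega = \iota_{g^*X_k}(g^*\omega)=g^*(\iota_{X_k}\omega)=g^*dk=d(k\circ g)$, using $g^*\omega=\omega$ and the normalization $\iota_{X_k}\omega=dk$.

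Thus the real work is fact (i): verifying $\iota_\chi\omega = dh$ on $V$. For $x=(p,v)\in V$ and a tangent vector $\eta=(\tilde J,\tilde J')\in T_xM = N_\phi(p)^\perp\times v^\perp$, the identification $\omega=\Omega|_M$ with $\Omega=\sum_i dp_i\wedge dv_i$ gives $\omega_x(\chi(x),\eta)=\langle\chi_1(x),\tilde J'\rangle-\langle\tilde J,\chi_2(x)\rangle$. On the other hand, writing $N=N_\phi$ and differentiating $h$ along $\eta$ — using $DN(p)=-L(p)$ and that $L(p)$ maps $T_p\Gamma$ into itself — yields
$$
dh_x(\eta) = 2\langle v,N(p)\rangle\,\langle\nabla u(p),\tilde J\rangle + 2u(p)\,\langle\tilde J',N(p)\rangle - 2u(p)\,\langle L(p)P_{N(p)}v,\tilde J\rangle.
$$
I would then match the two expressions term by term using the explicit formulas for $\chi_1,\chi_2$ from Proposition~\ref{prop:chi}, the adjoint relation $(P_{N(p)}^v)^*=P_v^{N(p)}$, the self-adjointness of $L(p)$, and the simplifications $\langle P_{N(p)}v,\tilde J'\rangle=-\langle v,N(p)\rangle\langle N(p),\tilde J'\rangle$ (since $\tilde J'\perp v$) and $P_{N(p)}^v\tilde J=\tilde J$ (since $\tilde J\perp N(p)$). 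These collapse $\langle\chi_1,\tilde J'\rangle$ to $2u(p)\langle N(p),\tilde J'\rangle$ and $-\langle\tilde J,\chi_2\rangle$ to $2\langle v,N(p)\rangle\langle\nabla u(p),\tilde J\rangle - 2u(p)\langle L(p)P_{N(p)}v,\tilde J\rangle$, so the two sides agree for every $\eta$; since $\omega$ is nondegenerate on $TM$, this forces $\chi=X_h$ on $V$.

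I expect the only delicate points to be bookkeeping ones: keeping the sign convention for $X_H$ consistent with $\omega=\Omega|_M$ (so that $\iota_{X_H}\omega=dH$ rather than $-dH$), and computing $dh$ along $T_xM$ correctly, which is where the derivative of the unit normal and the relation $DN(p)=-L(p)$ enter. There is no genuine conceptual obstacle: Proposition~\ref{prop:chi} already carried out the substantive analytic computation of the first-order term of $F_\varepsilon$, and the content of Proposition~\ref{prop:XH} is essentially the observation that this first-order term is a Hamiltonian vector field with the explicit generating Hamiltonian $H=h\circ f_\phi$ of~\eqref{eq:G}; the remainder is algebra with the projections $P_v$ and $P_\eta^v$ introduced in Section~\ref{sec:coordinates}.
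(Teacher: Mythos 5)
Your argument is correct and follows essentially the same route as the paper's proof: both verify $\iota_\chi\omega = dh$ by the same projection algebra (self-adjointness of $L(p)$, $(P_{N}^v)^*=P_v^{N}$, $P_{N}^v\tilde J=\tilde J$ for $\tilde J\in T_p\Gamma$, and $\langle P_Nv,\tilde J'\rangle=-\langle v,N\rangle\langle N,\tilde J'\rangle$ for $\tilde J'\perp v$), and then pass from $\chi=X_h$ to $f_\phi^*\chi=X_{h\circ f_\phi}$ via the symplecticity of $f_\phi$. The only cosmetic difference is that you state the pullback identity $g^*X_k=X_{k\circ g}$ explicitly whereas the paper invokes it implicitly in its final sentence.
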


\begin{proof}
Let $\chi$ be the vector field of Proposition~\ref{prop:chi}. Given $(\eta,\gamma)\in T_{(p,v)} M_\phi$ we find that
$$
\omega(\chi(p,v),(\eta,\gamma))=-2 u(p)\frac{\langle P_{N_\phi(p)}v,\gamma\rangle}{\langle v,N_\phi(p)\rangle} + 2  \langle Z,\eta\rangle
$$
where 
$$
Z:=u(p) P_v^{N_\phi(p)}DN_\phi(p)P_{N_\phi(p)}v +  \langle v, N_\phi(p)\rangle P_v^{N_\phi(p)} P_{N_\phi(p)}\nabla u(p).
$$
Taking into account that $\eta\in T_p\Gamma$ we get
\begin{align*}
\langle Z,\eta\rangle &= u(p)\langle DN_\phi(p)P_{N_\phi(p)}v,\eta\rangle + \langle v, N_\phi(p)\rangle\langle P_{N_\phi(p)}\nabla u(p),\eta\rangle\\
&=u(p)\langle DN_\phi(p)\eta,P_{N_\phi(p)}v\rangle + \langle v, N_\phi(p)\rangle \langle \nabla u(p),\eta\rangle\\
&=u(p)\langle DN_\phi(p)\eta,v\rangle + \langle v, N_\phi(p)\rangle \langle \nabla u(p),\eta\rangle.
\end{align*}
Moreover, since $\gamma\in v^\perp$, we have
$$
\langle P_{N_\phi(p)}v,\gamma\rangle = -\langle v, N_\phi(p)\rangle\langle \gamma, N_\phi(p)\rangle.
$$
Now, taking the derivative of $h(p,v)=2u(p)\langle v,N_\phi(p)\rangle$ we see that
\begin{align*}
dh(p,v)(\eta,\gamma)= 2u(p)&\left(\langle v, D N_\phi(p)\eta\rangle
+\langle \gamma, N_\phi(p)\rangle\right)\\&+ \langle v, N_\phi(p)\rangle\langle \nabla u(p),\eta\rangle.
\end{align*}
Thus, $\omega(\chi,\cdot) = dh$, which shows that $\chi$ is Hamiltonian with respect to $\omega$. By Proposition~\ref{prop:chi},  $X:=\left.\frac{d}{d\varepsilon}\right|_{\varepsilon=0} F_\varepsilon = f_\phi^* \chi$,   hence $X$ is the Hamiltonian vector field of $h\circ f_\phi$ with respect to $\omega$.
\end{proof}

Given $k\in\Nn$, denote by $\Rr_{k+1}[y,z]$ the vector space of homogeneous polynomials of degree $k+1$ with real coefficients in the variables $y=(y_1,\ldots,y_{d})$ and $z=(z_1,\ldots,z_{d})$.  Let $$\ell=\ell(d,k):=\dim  \Rr_{k+1}[y,z].$$  It is clear that 
\begin{equation}\label{eq:ell}
\ell(d,k) = {2d+k \choose k+1}.
\end{equation}   
For $G\in  \Rr_{k+1}[y,z]$ given by
\begin{equation}\label{eq:F}
G(y,z)=y_1^{k+1},
\end{equation}
we define
$$
\GG_k:=\{(A_1,\ldots,A_{\ell})\in \Sp(\Rr^{2d})^\ell\colon \text{span}\{G\circ A_i\}_{i=1}^\ell = \Rr_{k+1}[y,z]\},
$$
where $\Sp(\Rr^{2d})$ stands for the symplectic linear group on $\Rr^{2d}$.

\begin{proposition}[\cite{carballo-miranda-2013}] For each $k\in\Nn$,  the subset $\GG_k$ is open and dense in $\Sp(\Rr^{2d})^\ell$.
\end{proposition}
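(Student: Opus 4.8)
The plan is to realize $\GG_k$ as the complement of the zero set of a real-analytic function on $\Sp(\Rr^{2d})^\ell$, which immediately yields openness, and then to show this function does not vanish identically, which together with connectedness of $\Sp(\Rr^{2d})$ yields density. Fix the monomial basis $q_1,\dots,q_\ell$ of $\Rr_{k+1}[y,z]$; recall $\ell=\dim\Rr_{k+1}[y,z]=\binom{2d+k}{k+1}$ by \eqref{eq:ell}. Define $\Phi\colon\Sp(\Rr^{2d})^\ell\to\Rr$ by letting $\Phi(A_1,\dots,A_\ell)$ be the determinant of the $\ell\times\ell$ matrix whose $i$-th column is the coordinate vector of $G\circ A_i$ in the basis $q_1,\dots,q_\ell$. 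Since $G(y,z)=y_1^{k+1}$, the polynomial $G\circ A_i$ is the $(k+1)$-st power of the linear form whose coefficients form the first row of $A_i$, so its coordinates are polynomials in the entries of $A_i$; hence $\Phi$ is the restriction to $\Sp(\Rr^{2d})^\ell$ of a polynomial in the matrix entries, in particular real-analytic and continuous. Because $\ell=\dim\Rr_{k+1}[y,z]$, the condition $\operatorname{span}\{G\circ A_i\}_{i=1}^\ell=\Rr_{k+1}[y,z]$ is equivalent to $\{G\circ A_i\}_{i=1}^\ell$ being a basis, i.e. to $\Phi(A_1,\dots,A_\ell)\neq0$. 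Thus $\GG_k=\{\Phi\neq0\}$, which is open.

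For density it suffices to exhibit one point of $\GG_k$, i.e. to show $\Phi\not\equiv0$. For $A\in\Sp(\Rr^{2d})$ the polynomial $G\circ A$ is the $(k+1)$-st power of the linear form $\lambda\circ A$, where $\lambda(y,z)=y_1$. Since $\Sp(\Rr^{2d})$ acts transitively on $\Rr^{2d}\setminus\{0\}$ — any nonzero vector can be completed to a symplectic basis — it likewise acts transitively on the nonzero linear forms, whence
$$
\operatorname{span}\{G\circ A\colon A\in\Sp(\Rr^{2d})\}=\operatorname{span}\{\mu^{k+1}\colon\mu\in(\Rr^{2d})^*\}.
$$
I then use the classical fact that the $(k+1)$-st powers of linear forms span all homogeneous polynomials of degree $k+1$: if this span were a proper subspace, then by non-degeneracy of the apolarity pairing $(p,q)\mapsto\bigl(p(\partial)q\bigr)(0)$ there would exist $0\neq p\in\Rr_{k+1}[y,z]$ with $\bigl(p(\partial)\mu^{k+1}\bigr)(0)=0$ for every $\mu$; but identifying $p$ with a homogeneous polynomial on $\Rr^{2d}$ and $\mu$ with its coefficient vector $a\in\Rr^{2d}$, a direct computation gives $\bigl(p(\partial)\mu^{k+1}\bigr)(0)=(k+1)!\,p(a)$, which vanishes for all $a$ only if $p=0$, a contradiction. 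Hence the $\Sp(\Rr^{2d})$-orbit of $G$ spans $\Rr_{k+1}[y,z]$, so one can choose $B_1,\dots,B_\ell\in\Sp(\Rr^{2d})$ with $\{G\circ B_i\}_{i=1}^\ell$ linearly independent, whence $\Phi(B_1,\dots,B_\ell)\neq0$.

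Finally, $\Sp(\Rr^{2d})$ is a connected Lie group, so $\Sp(\Rr^{2d})^\ell$ is a connected real-analytic manifold; a real-analytic function on a connected real-analytic manifold that does not vanish identically has nowhere dense zero set. Applied to $\Phi$, this shows $\GG_k=\{\Phi\neq0\}$ is dense, and combined with the openness above the proposition follows. I expect the middle paragraph to be the only genuine obstacle — namely the spanning of $\Rr_{k+1}[y,z]$ by $(k+1)$-st powers of linear forms, together with the transitivity of $\Sp(\Rr^{2d})$ on nonzero covectors; the remaining steps are the standard mechanism of a nonzero polynomial on a connected analytic set having dense complement, plus routine bookkeeping.
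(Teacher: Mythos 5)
Your proof is correct and complete: openness follows from the fact that $\GG_k$ is the non-vanishing locus of the real-analytic function $\Phi$ (the determinant of the coefficient matrix), and density follows from (i) transitivity of $\Sp(\Rr^{2d})$ on nonzero covectors, (ii) the classical Waring-type fact that $(k+1)$-st powers of linear forms span $\Rr_{k+1}[y,z]$ (which you establish via the apolarity pairing and the identity $p(\partial)\mu^{k+1}=(k+1)!\,p(a)$), and (iii) the identity theorem for real-analytic functions on the connected manifold $\Sp(\Rr^{2d})^\ell$. The paper does not actually prove this proposition — it merely cites Carballo--Miranda \cite{carballo-miranda-2013} — so there is no in-paper argument to compare against, but your argument is the standard and expected one, and every step (in particular the symplectic transitivity on nonzero vectors and the non-degeneracy of the apolarity form, which are the two places where a gap could conceivably hide) is verified.
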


Let $\{x_0,x_1,\ldots,x_{n}\}\subset M_\phi$ be an orbit segment of $f_\phi$ with $n\geq2$.  Let $(V_i,\psi_i)$ be a Darboux chart around $x_i=(p_i,v_i)$, where $\psi_i\colon V_i\to\Rr^{2d}$ is such that $\psi_i(x_i)=0$.  On $V_i$ we have the embedding of $\Gamma_\phi$ in $M_\phi$ defined by $\iota\colon \pi_1(V_i)\to M_\phi$,  $p\mapsto (p,v_i)$.  As $\iota(\pi_1(V_i))$ is Lagrangian, we may assume that the local coordinates $(y,z)=(y_1,\ldots,y_{d},z_1,\ldots,z_{d})$ given by the Darboux chart  $(V_i,\psi_i)$ satisfy 
\begin{equation}\label{eq:darboux}
\iota(\pi_1(V_i))\cap V_i = \{z_1=0,\ldots,z_{d}=0\}.
\end{equation}
\begin{defn}
We say that $f_\phi$ is \textit{$k$-general} along $\{x_0,x_1,\ldots, x_n\}$ if there exist positive integers $0<n_1<n_2<\cdots <n_\ell< n$ such that $(A_1,\ldots, A_\ell)\in \GG_k$ where $A_i=D(\psi_{n_i}\circ f_\phi^{n_i}\circ \psi_0^{-1})(0)$. 
\end{defn}

In the following we suppose that the orbit segment $\{x_0,x_1,\ldots,x_{n}\}$ passes only once through each of its reflection points, i.e.,  $\pi_1(x_i)\neq \pi_1(x_j)$ whenever $i\neq j$.  As in  \eqref{eq:perturbation u},  for each $i\in\{1,\ldots,n-1\}$, we perturb the body $\Gamma_\phi$ in a neighborhood of $p_i$,
$$
\phi_{\bepsilon} = (\id+(\varepsilon_1 u_1+\ldots+\varepsilon_{n-1}u_{n-1}) N_\phi)\circ \phi,
$$
where $\bepsilon=(\varepsilon_1,\ldots,\varepsilon_{n-1})\in\Rr^{n-1}$ and $u_i\colon\Rr^{d+1}\to\Rr$  is a $C^\infty$ function with compact support $K_i$ containing a neighborhood of $p_i$ such that $K_i\cap K_j=\emptyset$ whenever $i\neq j$.  According to \eqref{eq:darboux},  we also choose $u_i$ so that 
$$
J_0^{k+1} (u_i\circ \pi_1\circ \psi_i^{-1})(y,z) = G(y,z),
$$
where $G$ is defined in \eqref{eq:F}. 
This choice of $u_i$ can always be achieved  using appropriate $C^\infty$ bump functions.

By Lemma~\ref{lem:Nepsilon},  $\phi_{\bepsilon} \in \BB^r $ for every $\bepsilon\in B_\delta(0):=\{\bepsilon\in\Rr^{n-1}\colon \|\bepsilon\|<\delta\}$. In a neighborhood $V\subset V_0$ of $x_0$ we define the map, 
$$
F_{\bepsilon}^{(n)} = f_\phi^{-n} \circ f_{\phi_{\bepsilon}}^{n}.
$$ 
Notice that $F^{(2)}_{\bepsilon}=F_{\varepsilon_1}$ as defined in \eqref{def:Rvarepsilon}.  As in the case of $n=2$,  the map $F_{\bepsilon}^{(n)}$ is a local diffeomorphism at $x_0$.  
Next, we define the map $\SSS\colon B_\delta(0) \to \ker(\pi_k)\subset \JJ_s^k(d)$ by
$$
\SSS(\bepsilon)= J^k_0 (\psi_0\circ F_{\bepsilon}^{(n)}\circ \psi_0^{-1}),
$$
where $\pi_k\colon \JJ_s^k(d)\to \JJ_s^{k-1}(d)$ is the canonical projection.

Notice that $\SSS(0)=\id\in\ker(\pi_k)$. 
\begin{theorem}\label{thm:babyKT}
If $f_\phi$ is $k$-general along $\{x_0,x_1,\ldots, x_n\}$, then
 $\SSS$ is a submersion at $0$.
\end{theorem}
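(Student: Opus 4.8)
The plan is to show that the differential $D\SSS(0)\colon\Rr^{n-1}\to T_{\id}\ker(\pi_k)$ is surjective. Recall that $\dim\ker(\pi_k)=\ell=\binom{2d+k}{k+1}=\dim\Rr_{k+1}[y,z]$: a $k$-jet in $\ker(\pi_k)$ is represented by a symplectic map of the form $\id+X_G+\OO(|x|^{k+1})$, where $G\in\Rr_{k+1}[y,z]$ is homogeneous of degree $k+1$ and $X_G$ is its Hamiltonian vector field, since the symplecticity of such a jet forces the degree-$k$ part to be the Hamiltonian vector field of a homogeneous degree-$(k+1)$ polynomial; thus $T_{\id}\ker(\pi_k)$ is canonically identified with $\Rr_{k+1}[y,z]$ via $X_G\mapsto G$. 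I will prove that, under this identification, $D\SSS(0)\,e_i$ corresponds to $c_i\,(G\circ B_i)$ for each $i\in\{1,\dots,n-1\}$, where $c_i:=2\langle v_i,N_\phi(p_i)\rangle>0$, $B_i:=D(\psi_i\circ f_\phi^{\,i}\circ\psi_0^{-1})(0)\in\Sp(\Rr^{2d})$, and $G$ is the polynomial of \eqref{eq:F}. Granting this, $k$-generality of $f_\phi$ along $\{x_0,\dots,x_n\}$ supplies integers $0<n_1<\cdots<n_\ell<n$ with $(B_{n_1},\dots,B_{n_\ell})\in\GG_k$, i.e.\ $\operatorname{span}\{G\circ B_{n_m}\}_{m=1}^{\ell}=\Rr_{k+1}[y,z]$; since the $c_{n_m}$ are nonzero, $\{D\SSS(0)\,e_{n_m}\}_{m=1}^\ell$ spans $T_{\id}\ker(\pi_k)$ and $D\SSS(0)$ is onto.

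First I would record that the orbit segment persists under the perturbation. Since $J_0^{k+1}(u_i\circ\pi_1\circ\psi_i^{-1})=G$ vanishes to order $k+1\ge 2$ at $0$, we have $u_i(p_i)=0$, $\nabla u_i(p_i)=0$ and $D^2(u_i\circ\phi^{-1})(p_i)=0$; by Lemmas~\ref{lem:perturbing body} and~\ref{lem:Nepsilon} this gives $p_i\in\Gamma_{\phi_{\bepsilon}}$, $N_{\phi_{\bepsilon}}(p_i)=N_\phi(p_i)$ and $\ff_{\phi_{\bepsilon}}(p_i)=\ff_\phi(p_i)$ for all small $\bepsilon$. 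Because the segment passes only once through each reflection point, once the supports $K_i$ are chosen small enough the reflection points and the (compact) free-flight chords of $\{x_0,\dots,x_n\}$ avoid all $K_i$, so $\{x_0,\dots,x_n\}$ remains an orbit of $f_{\phi_{\bepsilon}}$ and $F_{\bepsilon}^{(n)}(x_0)=x_0$. Moreover the $(k-1)$-jet of $f_{\phi_{\bepsilon}}$ at each $x_j$ depends only on the $k$-jet of the body along the orbit (compare Lemma~\ref{lemma derivative billiard map Jacobi coord} and its higher-order analogues), which the displacement $\varepsilon_i u_i N_\phi$ leaves unchanged; hence $\psi_0\circ F_{\bepsilon}^{(n)}\circ\psi_0^{-1}=\id+\OO(|x|^k)$, confirming that $\SSS$ takes values in $\ker(\pi_k)$.

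Next I would compute $D\SSS(0)\,e_i=\partial_{\varepsilon_i}|_{0}J_0^k(\psi_0\circ F_{\bepsilon}^{(n)}\circ\psi_0^{-1})$ by restricting to the line $\bepsilon=\varepsilon_i e_i$, where $\phi_{\bepsilon}$ is the single perturbation $\phi_{\varepsilon_i}$ of \eqref{eq:perturbation u} at $p_i$. Using the disjoint supports and the ``passes only once'' property again, an orbit starting near $x_0$ has its first $i-1$ and its last $n-i-1$ collisions away from $K_i$, so $f_{\phi_{\varepsilon_i}}^{\,i-1}=f_\phi^{\,i-1}$ near $x_0$ and $f_{\phi_{\varepsilon_i}}^{\,n-i-1}=f_\phi^{\,n-i-1}$ near $x_{i+1}$. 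Hence, near $x_0$,
$$
F_{\varepsilon_i e_i}^{(n)}=f_\phi^{-n}\circ f_{\phi_{\varepsilon_i}}^{\,n}=f_\phi^{-(i-1)}\circ\bigl(f_\phi^{-2}\circ f_{\phi_{\varepsilon_i}}^{\,2}\bigr)\circ f_\phi^{\,i-1},
$$
and the middle factor is precisely the map $F_{\varepsilon_i}$ of \eqref{def:Rvarepsilon} based at $x_{i-1}$ for the orbit segment $\{x_{i-1},x_i,x_{i+1}\}$. By Proposition~\ref{prop:XH}, $\partial_{\varepsilon_i}|_0\bigl(f_\phi^{-2}\circ f_{\phi_{\varepsilon_i}}^{\,2}\bigr)$ is the Hamiltonian vector field of $h_i\circ f_\phi$, where $h_i(p,v)=2u_i(p)\langle v,N_\phi(p)\rangle$; conjugating by the symplectomorphism $f_\phi^{\,i-1}$ yields
$$
\partial_{\varepsilon_i}|_{0}\,F_{\bepsilon}^{(n)}=(f_\phi^{\,i-1})^{*}X_{h_i\circ f_\phi}=X_{\,h_i\circ f_\phi^{\,i}},
$$
a Hamiltonian vector field near $x_0$ with respect to $\omega$.

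Finally I would read off the $k$-jet of this vector field in the chart $\psi_0$. By the choices \eqref{eq:darboux}--\eqref{eq:F}, the function $h_i\circ\psi_i^{-1}=2\,(u_i\circ\pi_1\circ\psi_i^{-1})\cdot(\langle v,N_\phi\rangle\circ\psi_i^{-1})$ has vanishing jet up to order $k$ at $0$, with homogeneous degree-$(k+1)$ part $c_i\,G$ (the second factor having value $c_i$ at $0$); composing with $\psi_i\circ f_\phi^{\,i}\circ\psi_0^{-1}$, which fixes $0$ with derivative $B_i\in\Sp(\Rr^{2d})$, the degree-$(k+1)$ part of $(h_i\circ f_\phi^{\,i})\circ\psi_0^{-1}$ equals $c_i\,(G\circ B_i)$. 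Therefore the $k$-jet of $X_{h_i\circ f_\phi^{\,i}}$ in the chart $\psi_0$ is $X_{c_i\,G\circ B_i}$, i.e.\ $D\SSS(0)\,e_i$ corresponds to $c_i\,(G\circ B_i)\in\Rr_{k+1}[y,z]$ under the identification of the first paragraph, and the proof concludes as indicated there. The main obstacle is the localization bookkeeping of the third paragraph together with the invariance of the lower-order jet of the billiard map asserted in the second: one must check carefully, using the ``passes only once'' hypothesis and sufficiently small supports, that the perturbed iterates factor as stated near $x_0$, that nearby free-flight chords never re-enter $K_i$, and that the $(k-1)$-jet of $f_\phi$ along the orbit depends only on the $k$-jet of the boundary, which requires extending the derivative formula of Lemma~\ref{lemma derivative billiard map Jacobi coord} to higher order.
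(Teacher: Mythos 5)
Your proof is correct and takes essentially the same route as the paper's. The core steps match exactly: the factorization $F_{\varepsilon_i e_i}^{(n)}=f_\phi^{-(i-1)}\circ F_{\varepsilon_i}\circ f_\phi^{i-1}$, the use of Proposition~\ref{prop:XH} to get $\partial_{\varepsilon_i}|_0 F_{\bepsilon}^{(n)}=X_{h_i\circ f_\phi^i}$, the computation of the $(k+1)$-jet $2\langle v_i,N_\phi(p_i)\rangle\,G\circ D(\psi_i\circ f_\phi^i\circ\psi_0^{-1})(0)$ of $H_i$ in the $\psi_0$-chart, the identification $T_{\id}\ker(\pi_k)\cong\Rr_{k+1}[y,z]$, and the invocation of $k$-generality to get a spanning set. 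You supply some extra bookkeeping that the paper leaves implicit --- verifying that the orbit segment persists, that $\SSS$ indeed takes values in $\ker(\pi_k)$, and that the free-flight chords avoid the supports $K_i$ --- which is sound but not a genuine departure in method.
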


\begin{proof}
Let $\bepsilon_{i}=\varepsilon_i e_i\in\Rr^{n-1}$ where $e_i$ is the vector with value one in the $i-th$ entry and zero elsewhere.  Clearly,
$$
F_{\bepsilon_{i}}^{(n)}= f^{-i+1}\circ F_{\varepsilon_i}\circ f^{i-1}
$$
where $F_{\varepsilon_i}$ is the map defined in \eqref{def:Rvarepsilon}. By Proposition~\ref{prop:XH},  $F_{\varepsilon_i}=\id + \varepsilon_i X_{\tilde{H}_i}+O(\varepsilon_i^2)$ where $\tilde{H}_i$ is the  Hamiltonian $\tilde{H}_i=h_i\circ f_\phi$ with $h_i$ given in \eqref{eq:G}, i.e., $h_i(p,v)=2u_i(p)\langle v,N_\phi(p)\rangle$.  Therefore,
$$
\frac{\partial \SSS}{\partial \varepsilon_i}(0) = J^k_0 X_i,
$$
where $X_i$ is the Hamiltonian vector field of the Hamiltonian $H_i=h_i\circ f_\phi^i\circ \psi_0^{-1}$ with the respect to the standard symplectic form $dy\wedge dz$ in $\Rr^{2d}$.  Notice that 
\begin{align*}
J^{k+1}_0 H_i &= J^{k+1}_0 (h_i\circ f_\phi^i\circ \psi_0^{-1}) \\
&= J^{k+1}_0 (h_i\circ\psi_i^{-1})\circ(\psi_i\circ f_\phi^i\circ \psi_0^{-1})\\
&=2\langle v_i,N_\phi(p_i)\rangle G\circ D(\psi_i\circ f^i_\phi\circ \psi_0^{-1})(0)
\end{align*}
where $G$ is the polynomial given in \eqref{eq:F}.
Since $f_\phi$ is $k$-general along $\{x_0,x_1,\ldots, x_n\}$,  there are positive integers $0<n_1<n_2<\cdots<n_\ell< n$ such that $\{J^{k+1}_0 H_{n_j}\}_{j=1}^\ell$ spans the vector space $\Rr_{k+1}[y,z]$. Thus,  $\{J^k_0 X_{n_j}\}_{j=0}^\ell$ spans the tangent space of the Lie subgroup $\ker(\pi_k)$ at $\id$, which proves that $\SSS$ is a submersion at $0$.
\end{proof}

\subsection{Proof of Theorem~\ref{thm: K-T first}}

Recall $\ell$ from \eqref{eq:ell}.  Let $k\in\Nn$ and $\Sigma$ be an open,  dense and invariant subset of $ \JJ^{k}_s(d)$ and $x\in M_\phi$ be a periodic point of $f_\phi$ with period $m\geq  4 \ell$ whose periodic orbit $O(x)$ passes only once through each of its reflection points.  Splitting the orbit $O(x)$ in blocks of length 4, we can use Lemma~\ref{lem:franks} and Lemma~\ref{lem:Franks generic} together with Lemma~\ref{lem:perturbing body} to perturb the derivatives $\{Df^{4i}_{\phi}(x)\}_{i=1}^\ell$ by an arbitrary $C^\infty$- small smooth perturbation $\phi_0=\phi+u_0$ so that $O(x)$ is still a periodic orbit of $f_{\phi_0}$ and $f_{\phi_0}$ is $j$-general along $O(x)$ for every $j=1,\ldots, k$.  Hence,  by applying $k$ times Theorem~\ref{thm:babyKT}, there is an arbitrary $C^\infty$- small smooth perturbation $\phi_1=\phi_0+u_1$ such that $J^k_x (f_{\phi_0}^{-m}\circ f_{\phi_{1}}^m)\in f^{-m}_{\phi_0}\Sigma$.  This shows that $J_x^k f_{\phi_1}^m \in \Sigma$.
\qed

\subsection{Proof of Theorem~\ref{thm: K-T}}

Recall $\ell$ from \eqref{eq:ell}.  Let $\Sigma$ be an open, dense and invariant subset of $ \JJ^{k}_s(d)$, $k\in\Nn$.   Given $m\geq4\ell$,  denote by $\YY_m$ the set of $\phi\in\emb \infty$ such that $f_\phi$ has only a finite number of periodic orbits of period less than or equal to $m$, all periodic orbits are non-degenerate and each periodic orbit passes only once through each of its reflection points, and any two different periodic orbits have no common reflection point. By Theorem~\ref{thm: generic},  $\YY_m$ is a $C^\infty$-residual subset of $\BB^\infty$.  Now, denote by $\RR_m$ the subset of $\YY_m$ such that for every $\phi\in \RR_m$ and every periodic point $x$ of $f_\phi$ of period $\pi\leq m$, the $k$-jet of $f^\pi_\phi$ at $x$ belongs to $\Sigma$.  Because, for each $\phi\in\RR_m$, the billiard map $f_\phi$ has only a finite number of periodic orbits of period less than $m$ and $\Sigma$ is open,  by continuity of $\phi\mapsto f_\phi$ we conclude that $\RR_m$ is open relative to $\YY_m$.  Moreover, by Theorem~\ref{thm: K-T first}, the set $\RR_m$ is $C^\infty$ dense.  Hence,  $\RR_m$ is a $C^\infty$-residual set.   Taking the intersection, 
$
\RR=\bigcap_{m\geq4\ell}\RR_m
$
we conclude that $\RR$ is also $C^\infty$-residual.
\qed

\section{Proof of Theorem~\ref{thm: ell}}\label{sec: ellipt}
%%%%%%%%%%%%%%%%%%%%%%%%%%%%%%%%%%%%%%

We start by presenting some tools related to elliptic orbits of symplectomorphisms that will be later applied to the billiard maps. We conclude with a perturbation assuring positive topological entropy.

\subsection{Birkhoff normal form}

Let $q\in\Nn$ and $\omega_0$ be the canonical symplectic form on $\Rr^{2q}$.
Given a symplectomorphism $f\colon\Rr^{2q}\to\Rr^{2q}$ so that the origin is a totally elliptic fixed point,
we write the eigenvalues of $Df(0)$ as
$$
e^{\pm 2\pi ia_1},\dots, e^{\pm 2\pi ia_q}.
$$
Moreover, the fixed point is called 4-elementary if 
$$
\sum_{j=1}^qa_j\nu_j \not\in\Zz,
$$
for every $\nu_1,\dots,\nu_q\in\Zz$ such that $1\leq  \sum_{j=1}^q|\nu_j| \leq 4$.

One can find coordinates for which $f$ takes a more explicit form, called the Birkhoff normal form (see below).
We will be using the map $\psi\colon\Rr^{q}\times\Rr^q\to\Cc^q$, $\psi(x,y)=x+iy$.

\begin{theorem}[{Birkhoff normal form~\cite[Lemma 3.3.2]{Klingenberg-1978}}]\label{thm: Birkhoff nf}
Let $f$ be a $C^1$-symplectomorphism on $(\Rr^{2q},\omega_0)$ of class $C^3$ at the origin.
If the origin is a 4-elementary totally elliptic fixed point,
then there is a $C^\omega$-symplectomorphism $h$, a $q\times q$ real matrix $\beta$ and a $C^1$-map $R\colon\Cc^q\to\Cc^q$ such that $F\colon\Cc^q\to\Cc^q$, $F:=\psi\circ h\circ f\circ h^{-1}\circ\psi^{-1}$, is given by
$$
F(z)=\Phi(z)z+R(z)
$$
with
$$
\Phi(z)=
\begin{bmatrix}
e^{2\pi i\varphi_1(z)}&&0\\
&\ddots&\\
0&&e^{2\pi i\varphi_q(z)}
\end{bmatrix}
$$
where $\varphi_j(z)=a_j+\sum_{k=1}^q\beta_{j,k}|z_k|^2$
and $D^\ell R(0)=0$ for $\ell=0,1,2,3$.
\end{theorem}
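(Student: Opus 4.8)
The plan is to follow the classical Birkhoff procedure of successively eliminating non-resonant monomials in the Taylor expansion of $f$ by means of symplectic changes of coordinates, exactly as carried out in Klingenberg's book but keeping careful track of the limited smoothness (class $C^3$ at the origin, $C^1$ away from it). First I would linearize: since the origin is a totally elliptic fixed point of the symplectomorphism $f$, the linear part $Df(0)$ is a symplectic matrix whose eigenvalues lie on the unit circle and are $e^{\pm 2\pi i a_j}$; by a linear symplectic change of coordinates (the first part of $h$) we may assume $Df(0)$ is the block-diagonal rotation $\diag(e^{2\pi i a_1},\dots,e^{2\pi i a_q})$ acting on $\Cc^q$ via $\psi$. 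The $4$-elementary hypothesis $\sum_j a_j\nu_j\notin\Zz$ for $1\le\sum_j|\nu_j|\le 4$ guarantees that all resonant conditions relevant to monomials of total degree $\le 3$ (which, written in the $z,\bar z$ variables, are the ones that can survive) are excluded except for those producing the diagonal phase terms $|z_k|^2$.

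Next I would eliminate, degree by degree for degrees $2$ and $3$ in the $(z,\bar z)$ variables, all Taylor monomials $z^\alpha\bar z^\beta$ of $F$ that are \emph{not} of the resonant diagonal form $z_j|z_k|^2$. For each such monomial one solves a cohomological equation: conjugating $F$ by the time-one map of a homogeneous polynomial Hamiltonian replaces the coefficient by itself times $(e^{2\pi i\langle a,\alpha-\beta\rangle}-1)$ plus lower-order corrections, so the monomial can be removed provided $\langle a,\alpha-\beta-e_j\rangle\notin\Zz$, which is precisely what $4$-elementarity supplies for the degrees in play. Accumulating these (polynomial, hence $C^\omega$) symplectic conjugations into $h$, and collecting the surviving resonant terms, one arrives at $F(z)=\Phi(z)z+R(z)$ where the $j$-th component of $\Phi(z)z$ is $e^{2\pi i(a_j+\sum_k\beta_{j,k}|z_k|^2)}z_j$ — the reality of $\beta$ and the symmetry coming from the symplectic (volume- and form-preserving) nature of the normalizing transformations — and $R$ collects the remainder. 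Because $f$ is only assumed $C^3$ at the origin, after removing the degree $\le 3$ part the remainder $R$ is $C^1$ with $D^\ell R(0)=0$ for $\ell=0,1,2,3$; the normalizing map $h$ is real-analytic since it is built from finitely many time-one flows of polynomial Hamiltonians.

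The main obstacle is bookkeeping rather than conceptual: one must verify that the $4$-elementary condition is exactly strong enough to kill every non-diagonal monomial up to the order needed (this is why the bound $\sum|\nu_j|\le 4$, not $\le 3$, appears — the degree-$3$ step requires nonvanishing of $e^{2\pi i\langle a,\gamma\rangle}-1$ for $\gamma$ with $|\gamma|$ up to $4$ once the shift $-e_j$ is included), and that each conjugation preserves the symplectic form and does not destroy the normalization already achieved at lower degree. I would also need to be careful that the composition of the changes of coordinates is well-defined on a neighborhood of the origin and that the resulting $R$ genuinely has vanishing $3$-jet; this is where the hypothesis "$C^3$ at the origin" (as opposed to globally $C^3$) is used, so that $R$ inherits $C^1$ regularity globally but a vanishing Taylor polynomial of degree $3$ at $0$. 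Since the statement is quoted verbatim from \cite[Lemma 3.3.2]{Klingenberg-1978}, in the paper itself this would simply be cited; the above is the argument one would reconstruct if a self-contained proof were wanted.
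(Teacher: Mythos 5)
The paper does not prove this theorem; it quotes it from \cite[Lemma 3.3.2]{Klingenberg-1978}, so there is no internal proof to compare against, and you acknowledge this yourself in your closing sentence. Your sketch of the standard Birkhoff normalization (diagonalize the linear part, eliminate nonresonant degree-$2$ and degree-$3$ monomials via conjugation by time-one flows of homogeneous polynomial Hamiltonians, collect the surviving resonant $z_j|z_k|^2$ terms into $\Phi$, absorb the rest into $R$) is the correct argument from the cited source, and you correctly identify why the $4$-elementary hypothesis is precisely what the degree-$\le 3$ normalization needs: the small divisor for a monomial $z^\alpha\bar z^\beta$ in the $j$-th coordinate involves $\langle a,\alpha-\beta-e_j\rangle$ with $|\alpha-\beta-e_j|\le|\alpha|+|\beta|+1\le 4$, and $4$-elementarity keeps it away from $\Zz$ unless $\alpha-\beta=e_j$.
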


\subsection{Contreras-Arnaud-Herman theorem}

Consider the exact symplectic manifold $(\Tt^q\times\Rr^q,\omega)$ with
$
\omega = d\lambda
$
and $\lambda=r\,d\theta$ by using the coordinates $(\theta,r)\in\Tt^q\times\Rr^q$.
So, $\omega=dr \wedge d\theta$.
Denote by $\pi_1\colon \Tt^q\times\Rr^q\to\Tt^q$ the canonical projection on the first $q$ components.

A symplectomorphism $f$ is weakly monotonous if
$$
\det (\partial_2 \pi_1f(\theta,r)) \not=0
$$ 
(when $q=1$ it is usually called a twist map).

A completely integrable symplectomorphism is defined to be of the form 
$$
g(\theta,r)=(\theta+\tau(r) \bmod \Zz^q,r)
$$
for a given $\tau\in C^1(\Rr^q,\Rr^q)$ with $\tau(0)=0$.
It is an exact symplectomorphism.
In addition, it is weakly monotonous if $\det(D\tau(\theta,r))\not=0$.
Notice that in this case any symplectomorphism $C^1$-close to $g$ is also weakly monotonous.

The following theorem is proved in~\cite[Theorem 4.1]{contreras-am-2010} by Contreras using previous results of Arnaud and Herman~\cite{Arnaud1992}.

\begin{theorem}[Contreras]\label{th:contreras}
If $f\colon\Tt^q\times\Rr^q\to\Tt^q\times\Rr^q$ is a weakly monotonous exact $C^4$-symplectomorphism without degenerate periodic orbits and $C^1$-close to a completely integrable symplectomorphism, then $f$ has  a 1-elliptic periodic point near $\Tt^q\times\{0\}$.
\end{theorem}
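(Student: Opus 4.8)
The plan is to follow the variational strategy of \cite{contreras-am-2010}, feeding a Morse-theoretic count of periodic orbits — supplied by the periodic-orbit theory of Arnaud and Herman \cite{Arnaud1992} — into an index lemma that detects ellipticity; one fixes a single rational rotation vector close to $0$ and runs the argument there. First I would set up the rotation vector. Writing the completely integrable symplectomorphism approximating $f$ as $g(\theta,r)=(\theta+\tau(r),r)$ with $\tau(0)=0$, weak monotonousness gives $\det D\tau(0)\neq 0$, so $\tau$ is a local $C^1$-diffeomorphism carrying a small ball $B\subset\Rr^q$ onto a neighbourhood $W$ of $0$; I would pick a rational $\rho=p/n\in W$ with $|\rho|$ small and put $r_\rho=\tau^{-1}(\rho)$, which is small. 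The torus $\Lambda_\rho:=\Tt^q\times\{r_\rho\}$ is then $g$-invariant and consists entirely of periodic points of rotation vector $\rho$ and period $n$, and it sits in a neighbourhood of $\Tt^q\times\{0\}$ that shrinks with $|\rho|$ and with $\|f-g\|_{C^1}$.

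Next I would establish persistence together with a Morse count. Since $f$ is exact, weakly monotonous and $C^1$-close to $g$, the results of \cite{Arnaud1992} — the discrete variational theory for the generating function of a weakly monotonous exact symplectomorphism, localized near $\Lambda_\rho$, together with the Lusternik--Schnirelmann/Morse estimates it carries — produce periodic orbits of $f$ of rotation vector $\rho$ lying in an $O(\|f-g\|_{C^1})$-neighbourhood of $\Lambda_\rho$. For $g$ the corresponding critical set degenerates onto $\Lambda_\rho\cong\Tt^q$; since $f$ has no degenerate periodic orbits, the Morse--Bott inequalities relative to $\Tt^q$ give, in the appropriate (Maslov-type) index, at least $b_j(\Tt^q)=\binom{q}{j}$ periodic orbits of $f$ of rotation vector $\rho$ of index $j$, and in particular at least $\binom{q}{1}=q\geq 1$ of index exactly $1$.

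Then I would invoke the index lemma. For a weakly monotonous exact symplectomorphism, the Arnaud--Herman index lemma relates the index and nullity of the discrete action at a periodic orbit to the spectrum of the linearized return map $Df^n$ along that orbit; applied to a non-degenerate periodic orbit of index $1$ it forces $Df^n$ to have exactly one conjugate pair of non-real eigenvalues of modulus $1$, i.e.\ the orbit is $1$-elliptic. Weak monotonousness is what excludes the ``negative hyperbolic'' alternative, the absence of degenerate periodic orbits is what makes the index well defined, and the $C^4$-hypothesis supplies the regularity needed here. Combined with the previous step this yields a $1$-elliptic periodic point of $f$ in the prescribed neighbourhood of $\Tt^q\times\{0\}$, which proves the theorem.

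The hard part will be Steps two and three, not the set-up. A weakly monotonous symplectomorphism is only \emph{locally} a twist map, so its action/generating function need not exist globally and the classical Birkhoff--Poincar\'e periodic-orbit argument does not apply verbatim; I would have to use closeness to a completely integrable map to patch the local generating functions and, above all, to control periodic orbits of arbitrarily large period $n$ — unavoidable, since making $|\rho|=|p/n|$ small forces $n$ large. Carrying this out, and at the same time fixing the index convention so that the Morse--Bott inequalities and the ellipticity-detecting index lemma hold simultaneously, is precisely the work done in \cite{Arnaud1992}; once it is in place, Steps one to three are bookkeeping.
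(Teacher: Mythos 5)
The paper does not prove this statement; it cites it verbatim as Theorem~4.1 of Contreras \cite{contreras-am-2010}, which in turn rests on the variational periodic-orbit theory of Arnaud and Herman \cite{Arnaud1992}. So there is no ``paper's own proof'' to compare against, only a pointer to the literature. Your sketch does capture the structure of the cited argument: localize near a rational invariant torus of the integrable approximant, use the discrete variational principle for a weakly monotonous exact symplectomorphism to force periodic orbits of that rotation vector, apply Morse inequalities to get orbits of distinct indices, then convert index information into spectral information for $Df^n$ via an index lemma. You have also correctly identified the genuinely delicate points (only local generating functions, unbounded period as the rotation vector shrinks), which is where Arnaud's machinery is doing the real work.

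One place where your write-up is too loose to survive scrutiny is the index count. The Morse index of a period-$n$ orbit of the discrete action functional grows linearly in $n$, so ``index exactly $1$'' has no invariant meaning; the claim must be stated relative to a baseline index $k_0$ determined by the integrable map (equivalently, a Conley--Zehnder / Maslov-type index reduced appropriately). Concretely: for $g$, all of $\Lambda_\rho$ sits at a single normal Morse index $k_0$; Morse--Bott then gives perturbed critical points in the window $k_0,\dots,k_0+q$; and Arnaud's index lemma identifies the offset $j$ from $k_0$ with the number of eigenvalue pairs of $Df^n$ on the unit circle, with the twist (weak-monotonousness) hypothesis ruling out the inverse-hyperbolic alternative at offset $j=1$. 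Your parenthetical ``appropriate (Maslov-type) index'' gestures at this, but as written the step ``index $1$ implies $1$-elliptic'' would be false for a generic normalization, and fixing the normalization is precisely where the index lemma earns its keep. If you intend this to be a proof rather than a roadmap, that normalization and the exact statement of the index lemma need to be pinned down.
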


\subsection{Periodic orbits of a generic billiard}

We  show here that generically, periodic orbits are hyperbolic or a specific kind of $q$-ellipticity holds. We make use of the version of the Klingenberg-Takens theorem for billiard maps given by Theorem~\ref{thm: K-T}.

Let $\phi\in \emb r$, $r\in \{2,3,\dots,\infty\}$, and $x$ be a $q$-elliptic periodic point with period $m$.
The map $f_\phi^m$
%$f_\phi^m|_{W^c(p)}$ 
restricted to the center manifold $W^c(x)$ in a small enough neighbourhood of $x$, can be written in appropriate coordinates as a $C^{r-1}$-diffeomorphism $f\colon\Rr^{2q}\to\Rr^{2q}$ preserving the canonical symplectic form $\omega_0$ and fixing the origin.
If the origin is 4-elementary, then we say that $x$ is a 4-elementary $q$-elliptic periodic point of $\phi$.

Moreover, we say that $x$ is weakly monotonous if there is a Birkhoff normal form (Theorem ~\ref{thm: Birkhoff nf}) satisfying $\det(\beta)\not=0$.
Notice that this condition is invariant under conjugation by symplectomorphisms.
In addition, observe that the 3-jet of the new map $F$ at the origin is solely determined by $\Phi$.

\begin{proposition}\label{prop: generic 4-el wm}
There is a $C^\infty$-residual set $\RR_1\subset \emb \infty$ such that for any $\phi\in\RR_1$ any periodic point of period  $\geq 4\left(\begin{smallmatrix}2d+3 \\ 4\end{smallmatrix}\right)$ is either hyperbolic or 4-elementary weakly monotonous $q$-elliptic for some  $1\leq q\leq d$.
\end{proposition}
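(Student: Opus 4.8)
The plan is to encode "hyperbolic, or 4-elementary weakly monotonous $q$-elliptic" as membership of a suitable $k$-jet in an open, dense, invariant subset of $\JJ^k_s(d)$, and then apply the Klingenberg--Takens theorem for billiards (Theorem~\ref{thm: K-T}). The correct value of $k$ is $3$: being 4-elementary is a condition on the eigenvalues of $Df^m_\phi(x)$, hence on $J^1$; and weak monotonicity, by the last remark before the proposition, is a condition on the matrix $\beta$ in the Birkhoff normal form, which is determined by the $3$-jet of $f^m_\phi$ at $x$ (the $3$-jet of $F$ is determined by $\Phi$, and $\beta$ is read off from $\Phi$). So I would work with $\JJ^3_s(d)$ and check that $\ell(d,3)={2d+3\choose 4}$, which makes the period threshold $m_d=4{2d+3\choose 4}$ in Theorem~\ref{thm: K-T} match the one in the statement.

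First I would define, for each $1\le q\le d$, the set $\Sigma\subset\JJ^3_s(d)$ of $3$-jets $J^3_0 g$ such that either $Dg(0)$ is hyperbolic, or $0$ is a $q$-elliptic fixed point of $g$ that is 4-elementary and weakly monotonous. I must verify three things. \emph{Invariance:} since hyperbolicity, the number of unimodular eigenvalues, the 4-elementary condition (a statement about the $a_j$), and the weak-monotonicity condition ($\det\beta\neq0$, invariant under symplectic conjugation by the remark preceding the proposition) are all preserved under conjugation $g\mapsto \sigma g\sigma^{-1}$, the set $\Sigma$ is invariant. \emph{Openness:} hyperbolicity is open; on the elliptic stratum the eigenvalue conditions and $\det\beta\neq0$ are open conditions on $J^3$, and the elliptic strata for different $q$ are relatively open in the non-hyperbolic locus away from the degenerate set, so $\Sigma$ is open in $\JJ^3_s(d)$. \emph{Density:} this is the substantive point. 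Given any $3$-jet, if its linear part is hyperbolic we are already in $\Sigma$; otherwise I perturb within $\Sp(\Rr^{2d})$ so that the eigenvalues become either all off the unit circle, or exactly $2q$ of them unimodular with the $a_j$ satisfying the 4-elementary (Diophantine-type, countably many hyperplanes to avoid) condition — a generic perturbation of the linear part; then, keeping the linear part fixed, I perturb the higher-order terms of the symplectic jet to arrange $\det\beta\neq0$, using that $\beta$ depends on the cubic part of the jet and that the relevant map from cubic corrections to $\beta$ has full-rank image (this is essentially the content of the normal-form computation and can be seen by a direct argument with generating functions, or cited from the Tonelli/Birkhoff-normal-form literature).

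Once $\Sigma$ is shown to be open, dense and invariant, Theorem~\ref{thm: K-T} with $k=3$ yields a $C^\infty$-residual set $\RR(\Sigma)\subset\emb\infty$ such that for every $\phi\in\RR(\Sigma)$ and every periodic point $x$ of $f_\phi$ of period $m\ge m_d=4{2d+3\choose4}$ one has $J^3_x f^m_\phi\in\Sigma$. Intersecting over the finitely many choices $q=1,\dots,d$ (or simply taking $\Sigma$ to be the union over $q$ of the sets above, which is again open, dense and invariant) and intersecting with the residual set of Theorem~\ref{thm: generic}(1) so that periodic orbits pass only once through each reflection point (needed to invoke Theorem~\ref{thm: K-T first}, already built into Theorem~\ref{thm: K-T}), and with the set where all periodic points are non-degenerate (Theorem~\ref{thm: generic}(2), which also rules out roots of unity and hence the parabolic/degenerate exclusions), gives the desired residual set $\RR_1$. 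For $\phi\in\RR_1$ every periodic point of period $\ge m_d$ has $Df^m_\phi(x)$ with no unimodular-but-non-4-elementary behaviour: it is hyperbolic, or it is $q$-elliptic for some $q$ (no eigenvalue $\pm1$ by non-degeneracy), 4-elementary, and weakly monotonous, as claimed.

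The main obstacle is the density statement for $\Sigma$, specifically showing that a suitable higher-order symplectic perturbation can force $\det\beta\neq0$ while keeping the prescribed (generic) linear part; this is where the structure of the Birkhoff normal form and an explicit computation of $\beta$ in terms of the $3$-jet are needed. Everything else — invariance, openness, the bookkeeping with the period bound $m_d$, and the final intersection of residual sets — is routine given Theorems~\ref{thm: K-T}, \ref{thm: Birkhoff nf} and \ref{thm: generic}.
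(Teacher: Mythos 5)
Your proposal takes essentially the same route as the paper: set $k=3$, observe that $\ell(d,3)=\binom{2d+3}{4}$ matches $m_d$, define $\Sigma\subset\JJ^3_s(d)$ as the jets whose origin is hyperbolic or 4-elementary weakly monotonous $q$-elliptic for some $q$, check that $\Sigma$ is open, dense and invariant, and apply Theorem~\ref{thm: K-T}. The paper's proof is exactly this, stated in three lines with the openness/density/invariance of $\Sigma$ simply asserted; you flesh out those checks, which is reasonable supplementary detail.

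One wobble to fix: your primary phrasing defines $\Sigma$ \emph{for each fixed $q$} and then intersects the resulting residual sets $\RR(\Sigma_q)$. That does not work. For a fixed $q<d$ the set $\Sigma_q$ is not dense (a $q'$-elliptic jet with $q'\neq q$ is stably outside $\Sigma_q$, since generic unimodular eigenvalues of a symplectic map persist under small symplectic perturbation by the Krein signature argument), so Theorem~\ref{thm: K-T} cannot be applied to $\Sigma_q$ at all. Your parenthetical alternative --- take $\Sigma$ to be the union over $q$, which is then genuinely open, dense and invariant --- is the correct move, and is what the paper does. With that reading, the argument is correct. The extra intersections you take with the residual sets from Theorem~\ref{thm: generic} are harmless but redundant: condition (1) is already built into the proof of Theorem~\ref{thm: K-T}, and non-degeneracy of the relevant periodic points is automatic from $J^3_xf^m_\phi\in\Sigma$, since degenerate jets lie outside $\Sigma$.
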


\begin{proof}
Let $\Sigma$ be the subset of 3-jets in $\JJ^3_s(d)$ for which the origin is either hyperbolic or 4-elementary weakly monotonous $q$-elliptic for some  $1\leq q\leq d$.
Notice that $\Sigma$ is open, dense and invariant. 
So, by Theorem~\ref{thm: K-T}, there is a residual set of billiards in $\BB^\infty$ whose 3-jets are in $\Sigma$. 
\end{proof}

%%%%%%%%%%%%%%%%%%%%%
\subsection{Reduction to 1-elliptic}
\label{sec: reduction to 1-ell}

We now find conditions for the existence of a 1-elliptic periodic point nearby a $q$-elliptic one.

Consider $\phi\in \emb2$ and a 4-elementary weakly monotonous $q$-elliptic $m$-periodic point.
We look again at $f_\phi^m$ restricted to the center manifold of the period point. This defines a Birkhoff normal form $F$ as in Theorem~\ref{thm: Birkhoff nf}, for the coordinates $z=x+iy\in\Cc^q$, and $\widetilde F=\psi^{-1}\circ F\circ \psi$ for the coordinates $(x,y)$.

Let $\varepsilon>0$ and
$$
Q_\varepsilon\colon \Rr^q\times\Rr^q\setminus\{(0,0)\}\to \Tt^q\times \Rr_+^q
$$ 
be a coordinate change such that $Q_\varepsilon^{-1}(\theta,r)=(x,y)$ with
$$
x_j=\sqrt{\varepsilon r_j} \cos(2\pi\theta_j)
\te{and}
y_j=\sqrt{\varepsilon r_j} \sin(2\pi\theta_j).
$$
We then define on $\Tt^q\times \Rr_+^q$ the map
$$
F_\varepsilon=Q_\varepsilon\circ \widetilde F \circ Q_\varepsilon^{-1}.
$$
Let $\lambda_\varepsilon=Q_\varepsilon^*(r\,d\theta)=\frac{1}{2\pi\varepsilon}(x\,dy-y\,dx)$ be the pull-back by $Q_\varepsilon$ of the form $r\,d\theta$.
Recall that $\widetilde F^*(\lambda_\varepsilon)-\lambda_\varepsilon$ is exact since $\widetilde F$ is a symplectomorphism on a simply connected domain.
So, $F_\varepsilon^*(r\,d\theta)-r\,d\theta$ is exact.
The same applies to any iterate $F_\varepsilon^N$.

Consider also the map 
$$
G_\varepsilon=Q_\varepsilon\circ \psi^{-1}\circ G\circ \psi \circ Q_\varepsilon^{-1},
$$
where $G(z)=\Phi(z)z$ is the first term of $F$. It follows by a simple computation that 
$$
G_\varepsilon(\theta,r)=(\theta+a+\varepsilon\beta r \bmod\Zz^q,r).
$$
Any iterate of $G_\varepsilon$ is a weakly monotonous completely integrable exact symplectomorphism since
$G_\varepsilon^N(\theta,r)=(\theta+Na+N\varepsilon\beta r \bmod\Zz^q,r)$ and
$\det(N\varepsilon\beta)\not=0$ for every $N\in\Nn$.

Notice that the fixed point of $F$ is not in the domain since $Q_\varepsilon$ is not defined at the origin. 
%It could be seen in $\Tt^q\times\{0\}$.
In fact, we focus on the following strip near $\Tt^q\times\{0\}$ so that we can use Theorem~\ref{th:contreras}.
Given $0<\rho<\frac 1{2q}$, define the domain
$$
B_\rho=
\left\{(\theta,r)\in\Tt^q\times\Rr^q\colon 
\sum_{j=1}^q\left(r_j-\frac{1}{2q}\right)^2 <\rho^2
\right\}.
$$

\begin{lemma}[Moser, cf.~\cite{Arnaud1992}]
Let $0<\rho'<\rho$ and $C>0$.
There is $\varepsilon_0>0$ such that
for every $0<\varepsilon<\varepsilon_0$ and $N\in\Nn$ verifying $\varepsilon N<C$, we have
$$
\lim_{\varepsilon\to0}\|F_\varepsilon^N-G_\varepsilon^N\|_{C^1}=0
\te{on}
B_{\rho'}.
$$
\end{lemma}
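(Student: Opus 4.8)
The plan is to strip the singular $\varepsilon$-scaling out of $Q_\varepsilon$ and reduce the statement to a uniform telescoping estimate for a \emph{fixed} pair of maps. In the notation of Theorem~\ref{thm: Birkhoff nf}, let $\Xi_1$ denote the $\varepsilon$-independent diffeomorphism given in the $(\theta,r)$-coordinates by $(\Xi_1(\theta,r))_j=\sqrt{r_j}\,e^{2\pi i\theta_j}$, and let $S_t$ be multiplication by $t$ on $\Cc^q$. One checks that $\psi\circ Q_\varepsilon^{-1}=S_{\sqrt\varepsilon}\circ\Xi_1$, whence $F_\varepsilon=\Xi_1^{-1}\circ\hat F_\varepsilon\circ\Xi_1$ and $G_\varepsilon=\Xi_1^{-1}\circ\hat G_\varepsilon\circ\Xi_1$ with $\hat F_\varepsilon(\zeta)=\varepsilon^{-1/2}F(\sqrt\varepsilon\,\zeta)=\Phi(\sqrt\varepsilon\,\zeta)\zeta+\varepsilon^{-1/2}R(\sqrt\varepsilon\,\zeta)$ and $\hat G_\varepsilon(\zeta)=\Phi(\sqrt\varepsilon\,\zeta)\zeta$. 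Since $\rho<\frac{1}{2q}$ forces $r_j>0$ on $\overline{B_\rho}$, the map $\Xi_1$ is a $C^\infty$ diffeomorphism on a neighbourhood of $\overline{B_\rho}$ with $C^2$-norm, and inverse $C^2$-norm, bounded by a constant depending only on $\rho$ and $q$. Hence it suffices to show that $\|\hat F_\varepsilon^N-\hat G_\varepsilon^N\|_{C^1(K)}\to0$ on the fixed compact set $K:=\Xi_1(\overline{B_\rho})$, uniformly over $\{N\in\Nn:\varepsilon N<C\}$, together with a uniform bound on $\|D\hat F_\varepsilon^N\|$ over that range.

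The next step is to record the two inputs that drive the argument. First, since $R$ is $C^1$ with $D^\ell R(0)=0$ for $\ell=0,1,2,3$, we have $\|R(z)\|=o(|z|^3)$ and $\|DR(z)\|=o(|z|^2)$ as $z\to0$; evaluating at $z=\sqrt\varepsilon\,\zeta$ with $\zeta\in K$, so that $|z|=O(\sqrt\varepsilon)$, gives the per-step error bound $\|\hat F_\varepsilon-\hat G_\varepsilon\|_{C^1(K)}=\|\varepsilon^{-1/2}R(\sqrt\varepsilon\,\cdot)\|_{C^1(K)}=o(\varepsilon)$. Second, in the $(\theta,r)$-coordinates $\hat G_\varepsilon$ is the affine shear $(\theta,r)\mapsto(\theta+a+\varepsilon\beta r,r)$; in particular it preserves $r$, so $\hat G_\varepsilon$-orbits of points of $\Xi_1(B_{\rho'})$ remain in $\Xi_1(B_{\rho'})$, and $D\hat G_\varepsilon^{j}$ corresponds, under the $\varepsilon$-free conjugation by $\Xi_1$, to the linear shear $(\delta\theta,\delta r)\mapsto(\delta\theta+j\varepsilon\beta\,\delta r,\delta r)$, while $\|D^2\hat G_\varepsilon\|=O(\varepsilon)$ on $K$. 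Consequently $\|D\hat G_\varepsilon^{j}\|\le c_1$ for $j\le N$ with $\varepsilon N<C$, and then $\|D\hat F_\varepsilon\|\le\|D\hat G_\varepsilon\|+o(\varepsilon)\le1+c_2\varepsilon$, so that $\|D\hat F_\varepsilon^{j}\|\le(1+c_2\varepsilon)^j\le e^{c_2 C}$ for all such $j$.

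With these in hand the conclusion follows from the telescoping identity $\hat F_\varepsilon^N-\hat G_\varepsilon^N=\sum_{k=0}^{N-1}\hat F_\varepsilon^{N-1-k}\circ(\hat F_\varepsilon-\hat G_\varepsilon)\circ\hat G_\varepsilon^{k}$. Bounding the $k$-th summand in $C^0$ by the Lipschitz constant $(1+c_2\varepsilon)^{N-1-k}\le e^{c_2 C}$ of $\hat F_\varepsilon^{N-1-k}$ times $\|\hat F_\varepsilon-\hat G_\varepsilon\|_{C^0}=o(\varepsilon)$, and summing over $k<N<C/\varepsilon$, gives $\|\hat F_\varepsilon^N-\hat G_\varepsilon^N\|_{C^0}\le e^{c_2 C}\,N\,o(\varepsilon)\le e^{c_2 C}\,C\cdot o(1)\to0$; a routine continuation argument (the error stays below a fixed positive distance determined by $\rho-\rho'$) then guarantees that for $\varepsilon$ small the $\hat F_\varepsilon$-orbits starting in $\Xi_1(B_{\rho'})$ remain in $K$, so that all the estimates above are legitimate. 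For the $C^1$ part one compares the matrix cocycles $D\hat F_\varepsilon^N=A_N\cdots A_1$ and $D\hat G_\varepsilon^N=B_N\cdots B_1$ along the respective orbits: from $\|A_j-B_j\|\le\|\hat F_\varepsilon-\hat G_\varepsilon\|_{C^1}+\|D^2\hat G_\varepsilon\|\,\|\hat F_\varepsilon^{j-1}-\hat G_\varepsilon^{j-1}\|_{C^0}=o(\varepsilon)+O(\varepsilon)\cdot o(1)=o(\varepsilon)$ together with $\|A_j\|,\|B_j\|\le1+c_2\varepsilon$, the standard product estimate yields $\|D\hat F_\varepsilon^N-D\hat G_\varepsilon^N\|\le e^{2c_2 C}\,N\,o(\varepsilon)=o(1)$. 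Conjugating back by the fixed diffeomorphism $\Xi_1$ transfers both estimates to $F_\varepsilon^N-G_\varepsilon^N$ on $B_{\rho'}$; since every $o(\cdot)$ above depends only on $R$, $\Xi_1$ and $C$, and never on $N$, the resulting bound is uniform over $\{N\in\Nn:\varepsilon N<C\}$, which is exactly the assertion.

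The main obstacle is the singular behaviour of $Q_\varepsilon$, whose derivative grows like $\varepsilon^{-1/2}$ as $\varepsilon\to0$, so that a priori one loses control of the difference of the two maps. The mechanism that saves the argument is that this blow-up is exactly absorbed by the vanishing of the $3$-jet of the Birkhoff remainder $R$ at the fixed point, producing a per-step $C^1$ error of size $o(\varepsilon)$ --- small enough to survive summation over the $O(\varepsilon^{-1})$ iterations permitted by $\varepsilon N<C$, whereas the $O(\varepsilon)$ shear in $\hat G_\varepsilon$, and hence in $\hat F_\varepsilon$, yields only bounded growth $e^{O(C)}$ of the derivatives over the same range. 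Once this balance is isolated, only the bookkeeping above remains, the single delicate point being the continuation argument that keeps the $\hat F_\varepsilon$-orbits inside the fixed compact set on which the estimates hold.
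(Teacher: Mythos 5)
The paper does not actually prove this lemma: it is stated as a citation (``Moser, cf.~\cite{Arnaud1992}'') with no argument supplied, so there is no in-paper proof to compare against. Your blind proof is, as far as I can tell, essentially correct and follows the natural line of attack for this kind of Moser-type averaging estimate: conjugate away the singular $\varepsilon^{-1/2}$ scaling so that the comparison reduces to iterating two maps $\hat F_\varepsilon$, $\hat G_\varepsilon$ on a fixed compact set, obtain a per-step $C^1$ error of size $o(\varepsilon)$ from the vanishing $3$-jet of the Birkhoff remainder, show that derivatives of iterates stay bounded by $e^{O(C)}$ over the range $\varepsilon N<C$ because the per-step derivative growth is $1+O(\varepsilon)$, and close with a telescoping sum (together with a cocycle comparison for the $C^1$ part) and a bootstrap/continuation argument to keep the $\hat F_\varepsilon$-orbits inside the compact set. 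The key observation that the singular blow-up in $DQ_\varepsilon$ is exactly absorbed by the $o(|z|^3)$ vanishing of $R$ is correctly identified and carried through.

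Two minor points worth tightening, neither of which is a genuine gap. First, the identity
$\hat F_\varepsilon^N-\hat G_\varepsilon^N=\sum_{k}\hat F_\varepsilon^{N-1-k}\circ(\hat F_\varepsilon-\hat G_\varepsilon)\circ\hat G_\varepsilon^{k}$
is an abuse of notation for nonlinear maps (one cannot compose with a ``difference of maps''); what you actually use, and correctly state in the next clause, is the telescoping decomposition $\hat F_\varepsilon^N-\hat G_\varepsilon^N=\sum_k\bigl(\hat F_\varepsilon^{N-1-k}\circ\hat F_\varepsilon\circ\hat G_\varepsilon^k-\hat F_\varepsilon^{N-1-k}\circ\hat G_\varepsilon\circ\hat G_\varepsilon^k\bigr)$ followed by a Lipschitz bound on $\hat F_\varepsilon^{N-1-k}$; it would be cleaner to write it that way. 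Second, the estimate $\|DR(z)\|=o(|z|^2)$ used for the per-step $C^1$ error requires somewhat more regularity on $R$ than the bare ``$C^1$ with $D^\ell R(0)=0$, $\ell\le3$'' of the Birkhoff normal form statement (one needs the Peano form of Taylor for $DR$, which in turn requires $R$ to be at least twice differentiable near $0$ with a third Fr\'echet derivative at $0$); in the actual application $\phi$ is $C^5$ or smoother so $R$ is amply regular and this is harmless, but it deserves a remark. Finally, the transfer of the $C^1$ bound through the fixed conjugacy $\Xi_1$ is asserted rather than spelled out; since it requires a short telescope of its own ($D\Xi_1^{-1}$ is evaluated at the two nearby points $\hat F_\varepsilon^N(\Xi_1(\cdot))$ and $\hat G_\varepsilon^N(\Xi_1(\cdot))$, and one uses $\|D^2\Xi_1^{-1}\|$ together with the already-established $C^0$ closeness), a sentence to that effect would complete the argument.
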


For sufficiently small $\varepsilon$ recall that $F_\varepsilon^N$ is weakly monotonous because it is $C^1$-close to $G_\varepsilon^N$.
Since we can extend these maps so that the conditions of Theorem~\ref{th:contreras} are fulfilled, we have thus proved the following result by using~\cite[Lemma 8.6]{Arnaud1992}.

\begin{proposition}
Let $\phi\in \emb5$ without degenerate periodic points.
Any 4-elementary weakly monotonous $q$-elliptic periodic point, $1\leq q\leq d$, 
has a 1-elliptic periodic point nearby.
\end{proposition}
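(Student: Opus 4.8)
The plan is to run the rescaling scheme already set up in this section, feed the resulting family of maps into Contreras's theorem (Theorem~\ref{th:contreras}), and then transfer the periodic point so obtained back to the billiard map $f_\phi$.

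First I would restrict $f_\phi^m$ to the center manifold $W^c(x)$ at the $q$-elliptic $m$-periodic point $x$. Since $\phi\in\emb5$, the map $f_\phi$ is $C^4$, and because $Df_\phi^m(x)$ has a clean spectral gap between its $2q$ unit-modulus eigenvalues and its $2(d-q)$ hyperbolic ones, $W^c(x)$ is a $C^4$, locally invariant and \emph{normally hyperbolic} symplectic submanifold; in Darboux coordinates the restriction is a $C^4$ symplectomorphism $f$ of $(\Rr^{2q},\omega_0)$ with $0$ a $4$-elementary totally elliptic fixed point. Weak monotonicity gives a Birkhoff normal form (Theorem~\ref{thm: Birkhoff nf}) with $\det\beta\neq0$, hence $F(z)=\Phi(z)z+R(z)$, $\widetilde F=\psi^{-1}\circ F\circ\psi$, and, through the scaling $Q_\varepsilon$, the maps $F_\varepsilon=Q_\varepsilon\circ\widetilde F\circ Q_\varepsilon^{-1}$ together with the completely integrable $G_\varepsilon(\theta,r)=(\theta+a+\varepsilon\beta r\bmod\Zz^q,r)$, all exact symplectomorphisms whose iterates are weakly monotonous since $\det(N\varepsilon\beta)\neq0$.

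Then I would fix $0<\rho'<\rho<\tfrac{1}{2q}$ and $C>0$ and apply Moser's lemma, which gives $\|F_\varepsilon^N-G_\varepsilon^N\|_{C^1}\to0$ on $B_{\rho'}$ as $\varepsilon\to0$ under $\varepsilon N<C$. \textbf{The main obstacle} is to choose $N=N(\varepsilon)\to\infty$ with $\varepsilon N(\varepsilon)$ bounded so that, after a lattice shift of the $r$-coordinate and a harmless torus translation of the angles, $G_\varepsilon^{N(\varepsilon)}$ becomes a completely integrable map with $\tau(0)=0$ whose resonant torus lies inside $B_{\rho'}$, while $\|F_\varepsilon^{N(\varepsilon)}-G_\varepsilon^{N(\varepsilon)}\|_{C^1}$ stays small \emph{compared with} the twist $N(\varepsilon)\varepsilon\beta$; simultaneously one extends both maps off $B_{\rho'}$ to all of $\Tt^q\times\Rr^q$ so as to remain $C^4$, exact, weakly monotonous, $C^1$-close, and free of degenerate periodic orbits outside $B_{\rho'}$ — inside $B_{\rho'}$ this last property is inherited from the hypothesis that $\phi$ has no degenerate periodic points, since periodic orbits of $F_\varepsilon^{N(\varepsilon)}$ there are conjugate, via $Q_\varepsilon$, $\psi$ and $h$, to periodic orbits of $f_\phi$ near $O(x)$. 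This balancing is precisely~\cite[Lemma~8.6]{Arnaud1992}, which I would quote rather than reprove.

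Finally, for $\varepsilon$ small, Theorem~\ref{th:contreras} applies to the extended $F_\varepsilon^{N(\varepsilon)}$ and produces a $1$-elliptic periodic point of $F_\varepsilon^{N(\varepsilon)}$ near the resonant torus, hence in $B_{\rho'}$. Pulling it back through $Q_\varepsilon$, $\psi$, $h$ and the center-manifold chart yields a periodic point $y_\varepsilon$ of $f_\phi$ at distance $\OO(\sqrt\varepsilon)$ from $x$ whose return derivative, along the center directions, has exactly one conjugate pair of non-real modulus-one eigenvalues; along the complementary directions, normal hyperbolicity of $W^c(x)$ keeps all eigenvalues off the unit circle, and no eigenvalue equals $\pm1$ because $\phi$ has no degenerate periodic points. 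Thus $y_\varepsilon$ is a $1$-elliptic periodic point of $f_\phi$, and $y_\varepsilon\to x$ as $\varepsilon\to0$, which is the assertion. (It remains only to note the routine point that, being $\OO(\sqrt\varepsilon)$-close to $x$, the full $f_\phi$-orbit of $y_\varepsilon$ stays in the region where $W^c(x)$ is invariant, so $y_\varepsilon$ is a genuine periodic orbit of the billiard.)
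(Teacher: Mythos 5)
Your proposal follows the paper's proof line by line: center-manifold reduction, Birkhoff normal form via weak monotonicity, the $Q_\varepsilon$ rescaling, Moser's lemma, the invocation of Arnaud's Lemma~8.6 to handle the extension/balancing of $F_\varepsilon^N$ and the transfer of ellipticity back to the ambient billiard map, and finally Contreras's theorem to produce the $1$-elliptic point. The paper's own argument is considerably terser (essentially a one-sentence assembly of the preceding subsection's constructions with the same two citations), so your write-up is a faithful and somewhat more explicit rendering of the same approach rather than a different one.
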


%%%%%
\subsection{Essential invariant curves with rational rotation number}

Let $\phi\in \emb2$ and $x$ a 1-elliptic periodic point with period $m$. 
Restrict $f_\phi^m$ to the two-dimensional center manifold of $x$, which is normally hyperbolic.
Use the Birkhoff normal form and the above coordinate change into $\Tt^1\times (0,\delta)$ for some small $\delta>0$.
We can extend this to an area-preserving twist diffeormorphism on the cylinder $\Aa=\Tt^1\times [0,1]$, that can be written in local coordinates as 
$$
\varphi\colon\Aa\to\Aa,
$$
with $\varphi(\theta,0)=(\theta,0)$ corresponding to the periodic point $x$ and $\varphi(\theta,1)\in \Tt^1\times\{1\}$.

Recall the following criterium to find positive topological entropy in the case of area-preserving twist maps (see also ~\cite{boyland-hall-1987,sinisa-2017}).
An essential curve is a non-contractible simple closed curve on $\Aa$. 

\begin{theorem}[Angenent~\cite{angenent-1990,angenent-1992}]
Let $\varphi$  be an area-preserving twist homeomorphism on $\Aa$ with rotation interval $I$ and preserving the boundaries.
If there is $\rho\in I$ such that there are no essential invariant curves with rotation number $\rho$, then $h_{top}(\varphi)>0$.
\end{theorem}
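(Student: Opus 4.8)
The plan is to derive $h_{top}(\varphi)>0$ from a semiconjugacy of a power of $\varphi$ onto a full shift on finitely many symbols, built from Aubry--Mather theory together with the variational (monotone recurrence relation) machinery of Angenent~\cite{angenent-1990,angenent-1992}. First I would note that $\rho$ must lie in the interior of $I$: the two boundary circles of $\Aa$ are essential invariant curves whose rotation numbers are precisely the endpoints of $I$, so if $\rho$ were an endpoint the hypothesis would already fail. With $\rho$ strictly between the boundary rotation numbers, the monotone-twist and area-preserving hypotheses supply, for every $\alpha\in I$, a minimizing Aubry--Mather set $\mathcal M_\alpha$ of rotation number $\alpha$ on which $\varphi$ acts in an order-preserving way; and for rational $\alpha=p/q$ there are, besides the minimizing orbits, Birkhoff periodic orbits of type $(p,q)$ of minimax (mountain-pass) type, one in each complementary gap of the minimizing $(p,q)$-orbits.

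The central step is to turn the absence of an essential invariant curve of rotation number $\rho$ into connecting orbits. If $\rho=p/q$, then the minimizing $(p,q)$-periodic configurations cannot fill a Lipschitz graph: by Birkhoff's theorem such a graph would be $\varphi$-invariant and essential, hence the forbidden curve. So the minimizing and minimax periodic orbits are distinct, and running Angenent's order-preserving parabolic recurrence flow on configuration space --- whose equilibria are exactly the orbits of $\varphi$ and which has the formal action as a strict Lyapunov function --- produces gradient lines from the minimax periodic configuration to translates of a minimizing one, i.e.\ heteroclinic orbits of $\varphi$ between the two periodic orbits, realizing either prescribed ``side''. If $\rho$ is irrational, $\mathcal M_\rho$ cannot be a circle (it would be the forbidden essential invariant curve), so it is a Cantor set, and Mather's gap argument --- or again the parabolic flow --- yields minimizing orbits bi-asymptotic to $\mathcal M_\rho$ inside a complementary gap, which take the place of the heteroclinics.

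It remains to assemble these orbits combinatorially. The configurations entering the minimizing/minimax variational problems obey Aubry's crossing lemma (any two meet at most once), so the connecting pieces are totally ordered and can be concatenated freely: to a finite word over a two-letter alphabet recording the ``side'' chosen at each excursion there corresponds, by the ghost-circle / lap-number shadowing argument of~\cite{angenent-1990}, a genuine orbit of $\varphi$ with that itinerary, and distinct words yield orbits that remain uniformly separated over a fixed proportion of their length. Counting $(n,\varepsilon)$-separated orbits then gives $h_{top}(\varphi)\ge\tfrac1N\log2>0$ for a suitable common period $N$, equivalently a semiconjugacy of $\varphi^N$ onto the full $2$-shift. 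I expect the real obstacle to be the middle step --- proving that destroying the invariant curve genuinely forces the connecting orbits to exist and makes them rigid enough to be glued --- and then running the concatenation for a mere homeomorphism, where Katok's smooth horseshoe theorem is unavailable; this is exactly what the monotone-recurrence-relation technology of Angenent, building on Mather's connecting-orbit theorems, is built to do, and I would follow that route rather than attempt a softer dynamical argument.
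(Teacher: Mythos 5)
This theorem is cited in the paper from Angenent's works~\cite{angenent-1990,angenent-1992} and is not proved there, so there is no in-paper argument to compare against. Your sketch correctly follows Angenent's own variational route (Aubry--Mather sets for all rotation numbers, Birkhoff's graph theorem forcing a gap in the minimizing set once the invariant circle is absent, the order-preserving parabolic gradient flow and ghost circles producing connecting orbits, and the lap-number shadowing yielding a semiconjugacy of a power of $\varphi$ onto a full shift), which is exactly the machinery of the cited papers, and you rightly identify the shadowing/concatenation step for a mere homeomorphism as the genuinely hard part that those papers are built to handle.
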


In the case $\varphi$ is a $C^{1+\alpha}$-diffeomorphism, $\alpha>0$, a celebrated result by Katok~\cite[Corollary 4.3]{katok-1980} states that positive topological entropy implies the existence of a hyperbolic periodic point with a transversal homoclinic point.
The hyperbolic periodic point in the central manifold is also a hyperbolic periodic point for the map by~\cite[Lemma 8.6]{Arnaud1992}.
The same holds for the transversal homoclinic point.
Hence, there is a horseshoe for some iterate of the billiard map~\cite[Theorem 6.5.5]{Katok}.

We are therefore left with the case of existence of essential invariant curves of $\varphi$ for any rotation number inside a small interval close to zero. If the rotation number is rational, only two cases can occur: the invariant curve either consists entirely of periodic points or is a heteroclinic chain, i.e. a set formed by finitely many hyperbolic periodic points $x_1,\ldots,x_n$ and heteroclinic connections between them (cf.~\cite{Herman1983,LeCalvez-1991}). In the first case, all periodic points must be degenerate, which is a situation not allowed generically (see Theorem~\ref{thm: generic}). To deal with the second case, first we observe that a hyperbolic periodic point for $\varphi$ is an hyperbolic periodic point for the billiard map. Then we use Theorem~\ref{thm: Donnay} together with parts (1) and (2) of Theorem~\ref{thm: generic} to obtain a convex body arbitrarily close to $\phi$ for which the points $x_1,\ldots,x_n$ are hyperbolic periodic points with transverse heteroclinic points. This property implies the existence of transverse homoclinic points for each periodic point $x_i$, which in turn implies the existence of a horseshoe for some iterate of the billiard map (see~\cite[Section~6]{Katok}). This concludes the proof of Theorem~\ref{thm: ell}.

%%%%%%%%%%%%%%%%%%%%%%%%%%%
\subsection{Transverse heteroclinic intersections}

\label{section: Donnay}

The theorem presented here is the multidimensional analog of~\cite[Theorem 1]{Donnay-2005}.

Let $\phi\in  \emb r$, $r\in\{2,3,\ldots,\infty\}$.  
Recall that $ \pi_{1}(x)=p $ for every $ x = (p,v) \in M_\phi $ and $N_\phi(p)\in\Rr^{d+1}$ is the unit normal vector of $\Gamma_\phi$ at $p$ inward-pointing, as defined in section~\ref{sec:coordinates}. 
Given a hyperbolic periodic point $ x \in M_\phi $ of $ f_\phi $, denote by $ W^s_\phi(x) $ and $ W^u_\phi(x) $ the stable and the unstable manifolds of $ x $.

\begin{theorem}\label{thm: Donnay}
Suppose that $f_\phi$ has two hyperbolic periodic points $x$ and $y$ and a heteroclinic point $ z \in W^s_\phi(x) \cap W^u_\phi(y) $ such that $\pi_1(z)\notin \pi_1(O(x)\cup O(y))$.  There exist $u\in C^{\infty}(S,\Rr)$ and $\varepsilon_0>0$ such that for every $0<\varepsilon<\varepsilon_0$ 
the following holds:
\begin{enumerate}
\item$\phi_\varepsilon:=\phi+\varepsilon u\cdot N_\phi(\pi_1(z))\in \emb r$,
\item $\phi_\varepsilon=\phi$ except for a small neighbourhood of $\pi_1(z)$,
\item $O(x)$ and $O(y)$ are hyperbolic periodic orbits of $f_{\phi_\varepsilon}$,
\item $z\in W^s_{\phi_\varepsilon}(x)\cap W^u_{\phi_\varepsilon}(y)$,
\item $T_z W^s_{\phi_\varepsilon}(x)$ and $T_z W^u_{\phi_\varepsilon}(x)$ are transverse. 
\end{enumerate}
\end{theorem}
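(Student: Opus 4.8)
\emph{The plan.} The idea is to realise $\phi_\varepsilon=\phi+\varepsilon u\cdot N_\phi(\pi_1(z))$ as a bump on $\Gamma_\phi$ localised near $p_z:=\pi_1(z)$, whose only effect on the dynamics along $O(z)$ is a first‑order, fully controllable change of the shape operator $L_\phi(p_z)$; the induced modification of $Df$ will be a symplectic shear at the collision at $p_z$, and a suitable choice of that shear turns the heteroclinic intersection at $z$ into a transverse one.

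\emph{Construction.} Put $s_z=\phi^{-1}(p_z)$ and choose a neighbourhood $U\ni s_z$ in $S$ so small that $\phi(U)$ misses the finite sets $\pi_1(O(x))$, $\pi_1(O(y))$, and — after replacing $z$ by another point of its orbit if necessary, which leaves the statement unaffected — also $\{\pi_1(f_\phi^{\,n}(z)):n\neq0\}$; this is possible because the forward and backward orbits of $z$ accumulate only on the reflection points of $O(x)$ and $O(y)$. Pick $u\in C^\infty(S,\Rr)$ supported in $U$ with $u(s_z)=0$, $Du(s_z)=0$, and with $D^2(u\circ\phi^{-1})(p_z)$ an arbitrary prescribed symmetric form on $T_{p_z}\Gamma_\phi$ (achievable with bump functions). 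By Lemma~\ref{lem:perturbing body} and Lemma~\ref{lem:Nepsilon}, for $|\varepsilon|$ small one has $\phi_\varepsilon\in\emb r$, $\phi_\varepsilon=\phi$ off $\phi(U)$, $p_z\in\Gamma_{\phi_\varepsilon}$ with $N_{\phi_\varepsilon}(p_z)=N_\phi(p_z)$ and $T_{p_z}\Gamma_{\phi_\varepsilon}=T_{p_z}\Gamma_\phi$, and $\ff_{\phi_\varepsilon}(p_z)=\ff_\phi(p_z)+\varepsilon\,D^2(u\circ\phi^{-1})(p_z)$, i.e. $L_{\phi_\varepsilon}(p_z)=L_\phi(p_z)+\varepsilon\,\delta L$ with $\delta L$ an arbitrary self‑adjoint operator. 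This gives (1) and (2). Since $u$ and its support avoid every reflection point of $O(x)$, $O(y)$ and $O(z)$ except $p_z$, and the reflection at $p_z$ uses only $N_\phi(p_z)$ and $T_{p_z}\Gamma_\phi$ (unchanged), $f_{\phi_\varepsilon}$ agrees with $f_\phi$ on and near these three orbits; hence $O(x),O(y)$ are hyperbolic periodic orbits of $f_{\phi_\varepsilon}$ with unchanged local invariant manifolds, $O(z)$ is still an orbit, and $z\in W^s_{\phi_\varepsilon}(x)\cap W^u_{\phi_\varepsilon}(y)$, which is (3) and (4).

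\emph{Effect on the tangent spaces at $z$.} In the Jacobi coordinates of section~\ref{sec:coordinates} along $O(z)$, the shape operator at $p_z$ enters the derivative cocycle only through the block $K(\bar x)$ of $Df_\phi(f_\phi^{-1}(z))$ appearing in Lemma~\ref{lemma derivative billiard map Jacobi coord}; by \eqref{definition of K} this block becomes $K(\bar x)+\varepsilon\,\delta K$ with $\delta K$ self‑adjoint and arbitrary, while every other derivative along $O(z)$ is unchanged. Hence $Df_{\phi_\varepsilon}(f_\phi^{-1}(z))=S_\varepsilon\circ Df_\phi(f_\phi^{-1}(z))$, where $S_\varepsilon\colon T_zM\to T_zM$ is a symplectic shear of the form $\id+\varepsilon\cdot(\text{infinitesimal shear determined by }\delta K)$. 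Now $E^s:=T_zW^s_\phi(x)$ is obtained from the cocycle joining $z$ to $W^s_{\mathrm{loc}}(x)$, which only involves the collisions at $\pi_1(f_\phi^{\,n}(z))$ with $n\geq1$, all $\neq p_z$, so $T_zW^s_{\phi_\varepsilon}(x)=E^s$; in contrast $E^u:=T_zW^u_\phi(y)$ is obtained from the cocycle joining $W^u_{\mathrm{loc}}(y)$ to $z$, whose final step is precisely $Df_\phi(f_\phi^{-1}(z))$, so $T_zW^u_{\phi_\varepsilon}(y)=S_\varepsilon(E^u)$. As $f$ is symplectic, $E^s,E^u$ are Lagrangian $d$‑planes, and (5) reduces to finding $\delta K$ and small $\varepsilon>0$ with $E^s\cap S_\varepsilon(E^u)=\{0\}$.

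\emph{Linear algebra and the main obstacle.} The shear $S_\varepsilon$ fixes the vertical Lagrangian $V=\ker d\pi_1(z)$ and moves $E^u$ with initial velocity the quadratic form $e\mapsto-\langle\delta K\,\Pi e,\Pi e\rangle$ on $E^u$, where $\Pi$ projects onto a horizontal complement of $V$. Provided $E^u\cap V=\{0\}$, choosing $\delta L$ (hence $\delta K$) positive definite makes this form negative definite on $W:=E^s\cap E^u$, so by the standard crossing‑form argument for paths of Lagrangian subspaces the crossing at $\varepsilon=0$ is isolated and $E^s\cap S_\varepsilon(E^u)=\{0\}$ for all small $\varepsilon>0$, which is (5). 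The needed transversality $E^u\cap V=\{0\}$ — equivalently, that $\pi_1$ restricted to the invariant manifolds of a hyperbolic billiard orbit is an immersion — is where strict convexity ($\tau>0$) enters; should one wish to avoid invoking it, the same scheme can be carried out at another reflection point $\pi_1(f_\phi^{\,j}(z))$, $j\neq0$, off $O(x)\cup O(y)$, chosen so that the transported intersection meets the corresponding vertical subspace trivially (such $j$ exists since $\tau>0$ prevents a nonzero transversal Jacobi field from being vertical at two consecutive collisions). This point — ensuring the controllable shear direction is not tangent to $E^s\cap E^u$ — is the crux of the proof; the only other subtlety is the bookkeeping in the Construction step guaranteeing that each of the three orbits meets the perturbation region exactly once.
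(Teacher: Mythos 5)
Your proposal is correct and follows essentially the same strategy as the paper: localize a bump at $p_z$ vanishing to first order there so that only the shape operator at $p_z$ (hence only $K(z)$) changes, observe that $T_zW^s_{\phi_\varepsilon}(x)$ is unaffected while $T_zW^u_{\phi_\varepsilon}(y)$ undergoes a controllable symplectic shear, represent both tangent spaces as Lagrangian graphs of self-adjoint operators $A_\phi$ and $B_\phi$, and deduce transversality for small $\varepsilon>0$. The only real divergence is in the final linear algebra. The paper takes $\delta L$ proportional to the identity, so that in the graph coordinates the shear adds $-2\varepsilon\langle v,N_\phi(p)\rangle I$ to $B_\phi$, and transversality becomes the elementary condition that $2\varepsilon\langle v,N_\phi(p)\rangle$ lie outside the finite spectrum of the self-adjoint operator $B_\phi-A_\phi$; this gives the explicit bound $\varepsilon_0=\sigma/(2\langle v,N_\phi(p)\rangle)$, with $\sigma$ the least nonzero singular value of $B_\phi-A_\phi$. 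You instead allow a general definite $\delta L$ and invoke the crossing-form argument for paths of Lagrangian subspaces (equivalently, first-order eigenvalue perturbation of $A_\phi-B_\phi-\varepsilon\,\delta K$); this is also valid but requires slightly more machinery where the paper's choice of $\delta L$ makes the bad set of $\varepsilon$ visible by inspection. Finally, the graph representation you worry about at the end — the condition $E^u\cap V=\{0\}$ — is exactly what the paper invokes by citing~\cite[Appendix B]{Wojtkowski-1988}; your suggested workaround at another reflection point is therefore unnecessary.
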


\begin{proof}
Let $z=(p,v)$ and consider a neighbourhood $ U \subset \Gamma_\phi$ of $ p $ such that $U$ does not intersect $\pi_1(O(x))$,  $\pi_1(O(y))$ and $\pi_1(O(z)\setminus\{z\})$. 
Recall the definition of $ K(x) $ in \eqref{definition of K}.  To stress the dependence on $\phi$ we write $K_\phi(x)$.  By Lemma~\ref{lem:perturbing body},  there exist $ \varepsilon_0 > 0 $ and $u\in C^\infty(S,\Rr)$ such that for every $ 0 \le \varepsilon<\varepsilon_0 $  
the following holds:
\begin{itemize}
\item $\supp(u)\subset \phi^{-1}(U)$
\item $\phi_\varepsilon:=\phi+\varepsilon u\cdot N_\phi(p) \in \emb r$,
	\item $ p \in \Gamma_{\phi_\varepsilon} $ and $ T_p \Gamma_{\phi_\varepsilon} = T_p \Gamma_\phi $,
	\item $ K_{\phi_\varepsilon}(z) = K_\phi(z) +  \varepsilon \Omega_\phi$,
\end{itemize}
where $\Omega_\phi:=-2\langle v,N_\phi(p)\rangle(P_{N_\phi(p)}^v)^*P_{N_\phi(p)}^v$ is an invertible self-adjoint linear operator on $v^\perp$.  
Then
\begin{equation*}
\label{eq:equal_orbits}
f^n_{\phi_\varepsilon}(x) = f_\phi^n(x), f^n_{\phi_\varepsilon}(y) = f_\phi^n(y) \text{ and } f^n_{\phi_\varepsilon}(z) = f_\phi^n(z), \quad   n \in \Zz.	
\end{equation*} 
Hence, $ x $ and $ y $ are hyperbolic periodic points of $ f_{\phi_\varepsilon} $, and $ z $ is a heteroclinic point of $  f_{\phi_\varepsilon} $., i.e., $ z \in W_{\phi_\varepsilon}^s(x) \cap W_{\phi_\varepsilon}^u(y) $. Moreover,
\begin{itemize}
	\item $ T_{f_\phi(z)} W_{\phi_\varepsilon}^s(f_\phi(x))= T_{f_\phi(z)} W_\phi^s(f_\phi(x)) $, 
	\item $ T_{f_\phi^{-1}(z)} W_{\phi_\varepsilon}^u(f_\phi^{-1}(y))= T_{f_\phi^{-1}(z)} W_\phi^u(f_\phi^{-1}(y)) $.
\end{itemize}
By Lemma~\ref{lemma derivative billiard map Jacobi coord}, it follows that
\begin{equation*}
\label{eq:stable_tangent_space}	
 T_{z}W_{\phi_\varepsilon}^s(x)= T_{z} W_\phi^s(x).
\end{equation*}
Since the stable and unstable manifolds of $x$ and $y$ with respect to $f_\phi$ are Lagrangian,  there exist self-adjoint  linear operators $ A_\phi $ and $ B_\phi $ on $N_\phi(p)^\perp$ such that the tangent spaces $ T_{z} W_\phi^s(x) $ and  $ T_z W_\phi^u(y) $ can be written in Jacobi coordinates as follows (see~\cite[Appendix B]{Wojtkowski-1988} and references therein),
\begin{align*} 
& T_{z} W_\phi^s(x) = \left\{(J,(P_{N_\phi(p)}^v)^*A_\phi P_{N_\phi(p)}^vJ) \colon J \in v^\perp\right\}, \\
%& T_{f^{-1}(z)} W^u(f^{-1}(y)) = \{(J,BJ) \colon J \in v^\perp\}.
& T_{z} W_\phi^u(y) = \left\{(J,(P_{N_\phi(p)}^v)^*B_\phi P_{N_\phi(p)}^vJ) \colon J \in v^\perp\right\}.
\end{align*}
Then, again by Lemma~\ref{lemma derivative billiard map Jacobi coord},
\begin{align*}
T_z W_{\phi_\varepsilon}^u(y) & = 
\begin{bmatrix}
I & 0\\
K_{\phi_\varepsilon}(z) & I
\end{bmatrix} 
\begin{bmatrix}
I & 0\\
-K_\phi(z) & I
\end{bmatrix} 
T_z W_\phi^u(y) \\
& = \left\{(J,B_\varepsilon J) \colon J \in v^\perp\right\} 
 \end{align*}
with
\begin{align*}
B_\varepsilon &:= (P_{N_\phi(p)}^v)^*B_\phi P_{N_\phi(p)}^v+ K_{\phi_\varepsilon}(z) - K_\phi (z)\\
&= (P_{N_\phi(p)}^v)^*B_\phi P_{N_\phi(p)}^v + \varepsilon \Omega_\phi\\
&=(P_{N_\phi(p)}^v)^*(B_\phi -2\varepsilon \langle v,N_\phi(p)\rangle I)P_{N_\phi(p)}^v.
\end{align*}
Now, the manifolds $W_{\phi_\varepsilon}^s(x) $ and $ W_{\phi_\varepsilon}^u(y) $ are transversal at $ z $ if and only if $  B_\phi - A_\phi - 2\varepsilon \langle v,N_\phi(p)\rangle I$ is invertible.  This is equivalent to say that $2\varepsilon \langle v,N_\phi(p)$ is not an eigenvalue of $B_\phi-A_\phi$.  If $A_\phi= B_\phi$,  then just choose any $0<\varepsilon<\varepsilon_0$,  otherwise choose any $0< \varepsilon<\min\{\frac{\sigma}{2 \langle v,N_\phi(p)},\varepsilon_0\}$ where $\sigma$ is the least non-zero singular value of $B_\phi-A_\phi$.
\end{proof}

\begin{remark}
Our main result applied to ellipsoids in $\Rr^{n+1}$ implies that there are convex bodies with smooth boundary arbitrarily close to ellipsoids whose billiards map has positive topological entropy. We observe that this conclusion can be obtained in a straightforward manner by applying Theorem~\ref{thm: Donnay} directly to ellipsoids, similarly to what Donnay did for billiards in ellipses~\cite{Donnay-2005}. Indeed, for a billiard in a  ellipsoid $\phi$ of $\Rr^{n+1}$ with a unique major axis, the periodic orbit $\gamma=\{x_1,x_2\}$ corresponding to the major axis of the ellipsoid is hyperbolic, and the periodic points $x_1$ and $x_2$ have heteroclinic connections, i.e. $W^s_{\phi}(x_1)=W^u_{\phi}(x_2)$ and $W^u_{\phi}(x_1)=W^s_{\phi}(x_2)$ \cite[Section 6]{Dels2}. Using Theorem~\ref{thm: Donnay}, we can $C^2$-approximate the ellipsoid $\phi$ by convex bodies $\phi_{\epsilon}$ with smooth boundary such that $x_1$ and $x_2$ are hyperbolic periodic points of period 2 also for $f_{\phi_{\epsilon}}$, and $W^s_{\phi_{\epsilon}}(x_1) $ and $W^u_{\phi_{\epsilon}}(x_2)$ as well $W^u_{\phi_{\epsilon}}(x_1)$ and $W^s_{\phi_{\epsilon}}(x_2)$ have transverse intersections. Hence, $x_1$ and $x_2$ form a heteroclinic chain with transverse heteroclinic points for the billiard map of $\phi_{\epsilon}$, and so each point $x_1$ and $x_2$ must have a transverse homoclinic point. In particular, the billiard inside $\phi_{\epsilon}$ has positive topological entropy. Similar results  were obtained in~\cite{Dels1,Dels2} using a variational approach and Poincar\'e-Melnikov method.
\end{remark}

\section{Franks' lemma for multidimensional billiards}
\label{sec: Franks}

%%%%%%%%%%%%%%%%%%%%%%%%%%%%%%%%%%%%%%%

In this section we generalize the Franks' lemma for multidimensional billiards.

Denote by $\Sp(T_xM)$ the set of linear symplectic automorphisms on $T_xM$.

\begin{theorem}\label{Franks}
Let $r\in\{2,3,\ldots,\infty\}$. There is a $C^r$-residual subset $\RR\subset\emb r$ such that for every $\phi\in \RR$,  every 
periodic point $x\in M_\phi$ of $f_\phi$ having period $m\geq4$ and any $\varepsilon>0$ there is $\delta>0$ such that for any $\Pi\in\Sp(T_xM)$ which is $\delta$-close to $Df^m_\phi(x)\colon T_xM\to T_xM$, 
there is $u\in C^\infty( S, \Rr^{d+1})$ which is $\varepsilon$-$C^2$-close to $0$ such that 
\begin{enumerate}
\item $\phi_u:=\phi+u\in \emb k$,
\item $O(x)$ is a periodic orbit of both $f_\phi$ and $f_{\phi_u}$,
\item $Df^m_{\phi_u}(x)=\Pi$,
\item the perturbation $u$ can be chosen to vanish outside an arbitrary small neighbourhood of 4 consecutive reflection points of the periodic orbit $\OO(x)$.
\end{enumerate}
\end{theorem}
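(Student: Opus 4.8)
The plan is to realize $\Pi$ as $Df^m_{\phi_u}(x)$ by perturbing the body along its normal in small, pairwise disjoint neighbourhoods of four consecutive reflection points of $O(x)$, using only the freedom to change the second fundamental form at those points. \emph{Step 1 (reduction to a perturbation of the shape operators).} Write $O(x)=\{x_0,\dots,x_{m-1}\}$, $x_i=(p_i,v_i)$, and fix four consecutive reflection points, say $p_1,\dots,p_4$. Take $u=u_1+\cdots+u_4$ with $u_j$ supported near $p_j$ and $u_j(p_j)=0$, $\nabla u_j(p_j)=0$. By Lemma~\ref{lem:perturbing body}, for $u$ small we get $\phi_u\in\emb r$, each $p_j$ stays on $\Gamma_{\phi_u}$ with the same tangent plane and normal, hence all reflection points, all flight times $\tau(x_i)$ and all reflection maps along $O(x)$ are unchanged — so $O(x)$ is still a periodic orbit of $f_{\phi_u}$ — and the second fundamental form at $p_j$ changes by $D^2(u_j\circ\phi^{-1})(p_j)$. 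By~\eqref{definition of K} this changes the self-adjoint map $K(x_j)$ on $v_j^\perp$ by an arbitrary prescribed small self-adjoint operator $\Delta_j$, leaving $K(x_i)$, $i\ne j$, untouched, and $u_j$ can be taken with $\|u_j\|_{C^2}\le C\|\Delta_j\|$, where $C=C(O(x))$ does not depend on the size of the support (only second derivatives of $u_j$ enter at leading order). This last point is exactly why the statement is confined to the $C^2$ topology.

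\emph{Step 2 (reduction to a linear statement in $\Sp(2d)$).} By Lemma~\ref{lemma derivative billiard map Jacobi coord}, in Jacobi coordinates $Df^m_\phi(x)$ is the ordered product $\prod_i D\tilde f(x_i)$ with $D\tilde f(x_i)=E^-(K(x_{i+1}))\,\diag(R_{p_{i+1}},R_{p_{i+1}})\,U(\tau(x_i))$, where $E^-(S)=\left(\begin{smallmatrix}I&0\\S&I\end{smallmatrix}\right)$ and $U(\tau)=\left(\begin{smallmatrix}I&\tau I\\0&I\end{smallmatrix}\right)$. Replacing $K(x_j)$ by $K(x_j)+\Delta_j$ left-multiplies the factor $D\tilde f(x_{j-1})$ by $E^-(\Delta_j)$, so solving $Df^m_{\phi_u}(x)=\Pi$ amounts to solving, for small $\Delta_1,\dots,\Delta_4$,
$$
\Pi=\Big(\textstyle\prod_{i\ge4}D\tilde f(x_i)\Big)E^-(\Delta_4)\,D\tilde f(x_3)\,E^-(\Delta_3)\,D\tilde f(x_2)\,E^-(\Delta_2)\,D\tilde f(x_1)\,E^-(\Delta_1)\,D\tilde f(x_0).
$$
This map sends $0$ to $Df^m_\phi(x)$, and the image of its differential at $0$ is $\Ad_{Df^m_\phi(x)}\big(\sum_{k=1}^4\Ad_{Df^k_\phi(x)^{-1}}\mathfrak e^-\big)$, where $\mathfrak e^-=\{\left(\begin{smallmatrix}0&0\\S&0\end{smallmatrix}\right):S=S^\top\}$. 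Hence, by the inverse function theorem together with the $C^2$-estimate of Step 1, the theorem follows (with $\delta$ chosen proportional to $\varepsilon$, the constant depending on $\phi,x,m$) once we know the \emph{spanning condition}
$$
\sum_{k=1}^4\Ad_{Df^k_\phi(x)^{-1}}\mathfrak e^-=\mathfrak{sp}(2d).
$$
Since $\dim\mathfrak e^-=\tfrac{d(d+1)}2$, four summands give total dimension $2d^2+2d>2d^2+d=\dim\mathfrak{sp}(2d)$, whereas three do not for $d\ge2$; this is the reason for the hypothesis $m\ge4$ and the use of four reflection points.

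\emph{Step 3 (genericity).} As $\Ad_{E^-(\cdot)}$ fixes $\mathfrak e^-$ and $\Ad_{\diag(R,R)}$ preserves it, while $\Ad_{U(\tau)}$ with $\tau>0$ tilts $\mathfrak e^-$ nondegenerately across $\mathfrak e^-\oplus\mathfrak h\oplus\mathfrak e^+$, the four subspaces $\Ad_{Df^k_\phi(x)^{-1}}\mathfrak e^-$ are explicit functions of the (strictly positive) flight times, the reflection maps and the shape operators along the four relevant steps of $O(x)$; a multidimensional analogue of the planar computation in~\cite{visscher-2015} shows that off a proper real-analytic subset of these data their sum is all of $\mathfrak{sp}(2d)$. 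For $\phi$ in the residual set of Theorem~\ref{thm: generic} — so that the periodic orbits of each fixed period are finite in number, non-degenerate, and pass once through each reflection point, with distinct orbits not sharing reflection points — this data can be moved off the bad set, orbit by orbit and without disturbing the other orbits, by the shape-operator perturbations of Step 1. Hence the set of bodies satisfying the spanning condition along every periodic orbit of period $\ge4$, for every choice of four consecutive reflection points, is a countable intersection of $C^r$-open (by continuity of $\phi\mapsto f_\phi$ and finiteness of the periodic orbits of each period) dense subsets, i.e.\ a $C^r$-residual set $\RR$; for $\phi\in\RR$, Steps 1--2 then deliver the desired $\delta$ and the perturbation $u$, supported in the prescribed neighbourhoods of the four chosen reflection points.

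The main obstacle is Step 3: establishing the spanning condition in arbitrary dimension $d$. Unlike in the classical Franks' lemma, a perturbation localised at one reflection point produces only the restricted family $E^-(\Delta)$ of vertical shears, so the argument depends essentially on the interplay — mediated by the intermediate cocycle and by the strict positivity of the flight times — of four such restricted families, together with the transversality needed to isolate the (unavoidable) residual genericity.
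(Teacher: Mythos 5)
Your overall architecture matches the paper's: perturb the second fundamental form at four consecutive reflection points via Lemma~\ref{lem:perturbing body}, keeping the orbit and flight times fixed; translate this into a free choice of four shears $E^-(\Delta_j)$ inserted in the cocycle product of Lemma~\ref{lemma derivative billiard map Jacobi coord}; and reduce the problem to showing that the resulting map into $\Sp(2d)$ is a submersion at $0$, with genericity supplied by Theorem~\ref{thm: generic} and an inverse-function-theorem argument. Your $C^2$-estimate for the bump functions in Step~1 is also correct and is the reason the statement lives in the $C^2$ topology. So far so good.

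The genuine gap is exactly where you flag it: the ``spanning condition'' of Step~2--3. You write that ``a multidimensional analogue of the planar computation in~\cite{visscher-2015} shows that off a proper real-analytic subset of these data their sum is all of $\mathfrak{sp}(2d)$,'' but you never prove this, and it is not a routine generalization. The dimension count you give ($4\cdot\frac{d(d+1)}2 = 2d^2+2d > 2d^2+d$) shows only that there is no a priori obstruction; it does not establish the rank. In the planar case $d=1$ the non-focusing condition is a single scalar inequality, and the obstruction to spanning is one-dimensional. In dimension $d\ge2$ the kernel of the differential of the product map $(K_1,\dots,K_4)\mapsto A(K_4,\tau_4)\cdots A(K_1,\tau_1)$ is higher-dimensional and depends in a nonlinear way on the intermediate operators; identifying an explicit, checkable, generic condition that kills this kernel is the technical core of the result. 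The paper does this via Definition~\ref{def F property} (the $F$-property: $\Omega(K_2)=K_2+(1/\tau_2+1/\tau_3)I$ invertible and $\Delta(K_2,K_3)=(K_3+(1/\tau_3+1/\tau_4)I)\Omega(K_2)$ with simple spectrum), the linear-algebra Lemma~\ref{lem:simple spectrum} on the map $Y\mapsto X^*Y-YX$ on $\Sym(\Vv)$, and the explicit block computation in Lemma~\ref{lem:franks}. Without an argument of this kind your Step~3 is an assertion, not a proof, and the residual set $\RR$ is not actually constructed: you cannot claim that the spanning condition fails only on a proper real-analytic subset, or that it is open, without first exhibiting what the condition is and proving it suffices.

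A second, smaller issue: in Step~2 you conjugate by $Df_\phi^k(x)^{-1}$ and talk about the sum $\sum_{k=1}^4\Ad_{Df^k_\phi(x)^{-1}}\mathfrak e^-$; but the shears $E^-(\Delta_j)$ enter the cocycle product conjugated also by the reflections $R_{p_i}$ mapping $v_{i-1}^\perp$ to $v_i^\perp$, so the ``freedom'' at step $j$ is $\Ad$ by a partial cocycle that includes these identifications and the intermediate shape operators. The paper handles this cleanly by pulling everything back to $\Sym(v_0^\perp)$ via $K_i:=(R_{p_i}\cdots R_{p_1})^{-1}K(x_i)R_{p_i}\cdots R_{p_1}$ before applying Lemma~\ref{lem:franks}; your formulation elides this and would need to be made precise before the spanning computation could even be set up in a fixed Lie algebra.
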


\subsection{Some linear algebra}
%%%%%%%%%%%%%%%%%%%%%%%%%%%%%%%%%%%%%%
Let $\Vv$ be a $d$-dimensional real vector space with an inner product $\langle\cdot,\cdot\rangle$.  Let $L(\Vv)$ denote the ring of linear operators on $\Vv$ and $\Sym(\Vv)$ denote the linear space of self-adjoint linear operators on $\Vv$. 
Given $X\in L(\Vv)$,  let $\Psi_X\colon \Sym(\Vv)\to L(\Vv)$ be the linear map
\begin{equation}\label{eq:mapPsi}
\Psi_X(Y)=X^* Y- YX,
\end{equation}
where $X^*$ is the adjoint linear operator of $X$. Clearly,  $\Psi_X(Y)=0$ iff $YX\in \Sym(\Vv)$.
\begin{lemma}\label{lem:simple spectrum}
If $X$ has all its eigenvalues distinct, then there is a subspace $V\subset \Sym(\Vv)$ of dimension $\frac{d(d-1)}{2}$ such that $\Psi_X$ restricted to $V$ is invertible.
\end{lemma}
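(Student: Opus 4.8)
The plan is to diagonalize $X$ and read off the kernel of $\Psi_X$ in an eigenbasis, then pick $V$ to be a complement that $\Psi_X$ maps injectively. Since $X$ has $d$ distinct eigenvalues, over $\Cc$ we get a basis of eigenvectors; but we must stay in the real vector space $\Vv$ and work with $\Sym(\Vv)$, so some care is needed with complex-conjugate pairs. However, for the purpose of a dimension count the cleanest route is: complexify, show the analogous statement for the complexified map, and descend. Concretely, let me first treat the case where $X$ is diagonalizable over $\Rr$ with eigenvalues $\mu_1,\dots,\mu_d$ all distinct and eigenvectors $w_1,\dots,w_d$. Note $\Psi_X(Y)=X^*Y-YX$, and $\Psi_X(Y)=0$ iff $YX$ is self-adjoint. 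The key observation is to compute $\Psi_X$ on the rank-one self-adjoint operators built from the dual basis.

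First I would introduce the basis $\{w_i\}$ of eigenvectors of $X$ and the operators $E_{ij}=w_i\langle w_j^*,\cdot\rangle + w_j\langle w_i^*,\cdot\rangle$ (symmetrized rank-one/rank-two operators), where $\{w_i^*\}$ is the dual basis; these span $\Sym(\Vv)$ as $i\le j$ ranges over all pairs, giving the expected $\frac{d(d+1)}{2}$ dimensions. Applying $\Psi_X$ to such an operator and using $Xw_i=\mu_i w_i$ together with the relation between $X^*$ and the dual basis, one finds $\Psi_X(E_{ij})$ is a combination of the same type of operator with coefficient $(\mu_i-\mu_j)$ (up to a harmless constant); in particular $\Psi_X(E_{ii})=0$ for each $i$, which accounts for a $d$-dimensional kernel, and $\Psi_X(E_{ij})\ne 0$ for $i\ne j$ since $\mu_i\ne\mu_j$. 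The span $V$ of $\{E_{ij}: i<j\}$ then has dimension $\frac{d(d+1)}{2}-d = \frac{d(d-1)}{2}$, and $\Psi_X|_V$ is injective because the images $\Psi_X(E_{ij})$ for $i<j$ are linearly independent — they are supported on distinct ``off-diagonal'' directions and scaled by nonzero factors $\mu_i-\mu_j$. Injective on a finite-dimensional space means invertible onto its image, which is what the statement claims.

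The main obstacle is the general eigenvalue configuration: $X$ having distinct eigenvalues does not force them to be real, so the eigenbasis lives in $\Vv\otimes\Cc$ and the rank-one operators $E_{ij}$ above are a priori complex, not elements of $\Sym(\Vv)$. The fix is to either (i) complexify everything — extend $\langle\cdot,\cdot\rangle$ to a bilinear form on $\Vv\otimes\Cc$, run the argument there to produce a complex subspace of dimension $\frac{d(d-1)}{2}$ on which $\Psi_X^{\Cc}$ is invertible, and then observe that since $X$ and the inner product are real, $\Psi_X$ commutes with complex conjugation, so one can extract a real subspace of the same dimension with the invertibility preserved; or (ii) group the eigenvalues into real ones and conjugate pairs, choose a real basis adapted to this ($2\times 2$ real blocks for each conjugate pair), and verify directly that the corresponding real symmetric operators are sent by $\Psi_X$ to linearly independent images. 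Approach (i) is shorter and I would take it: the descent from $\Cc$ to $\Rr$ is the only genuinely nontrivial point, and it rests on the equivariance of $\Psi_X$ under conjugation plus the fact that a conjugation-invariant complex subspace of $\Sym(\Vv)\otimes\Cc$ on which the map is injective contains its real points in the right dimension. The rest is the routine computation sketched above.
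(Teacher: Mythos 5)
Your overall strategy is the same as the paper's: both show that the eigenvalues of $Y\mapsto X^*Y-YX$ are the differences $\mu_i-\mu_j$ of eigenvalues of $X$, conclude that the kernel in $\Sym(\Vv)$ has dimension exactly $d$, and then take any complement in $\Sym(\Vv)$ as $V$. The paper packages the eigenvalue count via the Kronecker identity for $\hat X^\top\oplus(-\hat X^\top)$; you try to exhibit a kernel/complement basis explicitly through rank-one operators. However, the operators you write down are not actually self-adjoint, and this breaks the argument. Writing $\alpha_j\in\Vv$ for the vector representing the dual functional (so $\langle\alpha_j,w_k\rangle=\delta_{jk}$), your $E_{ij}=w_i\langle w_j^*,\cdot\rangle+w_j\langle w_i^*,\cdot\rangle$ is $w_i\alpha_j^*+w_j\alpha_i^*$, whose adjoint is $\alpha_jw_i^*+\alpha_iw_j^*$. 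These coincide only if $w_k=\alpha_k$ for all $k$, i.e.\ only if the eigenvectors of $X$ are orthonormal (so $X$ normal). In general $E_{ij}\notin\Sym(\Vv)$, so they do not furnish a basis of $\Sym(\Vv)$, and $\Psi_X(E_{ii})\neq0$: for example with $X=\bigl(\begin{smallmatrix}1&1\\0&2\end{smallmatrix}\bigr)$, $w_1=(1,0)$, $\alpha_1=(1,-1)$, one checks $\Psi_X(w_1\alpha_1^*)\neq0$.

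The fix is small but essential, and it also makes your computation match what the paper's Kronecker argument actually produces: the kernel is spanned by $\alpha_i\alpha_i^*$, not $w_i\alpha_i^*$ (the ``left eigenvectors $v$ of $\hat X$'' in the paper are precisely the $\alpha_i$, and $v\otimes v\equiv \alpha_i\alpha_i^\top$). So you should build the basis from the dual basis only: set $E_{ij}=\alpha_i\alpha_j^*+\alpha_j\alpha_i^*$. Then $E_{ij}=E_{ij}^*\in\Sym(\Vv)$, and since $X^*\alpha_i=\mu_i\alpha_i$ and $\alpha_i^*X=\mu_i\alpha_i^*$, one gets $\Psi_X(E_{ij})=(\mu_i-\mu_j)(\alpha_i\alpha_j^*-\alpha_j\alpha_i^*)$, which vanishes iff $i=j$ and gives linearly independent skew-adjoint images for $i<j$. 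With this correction the dimension count and the choice $V=\mathrm{span}\{E_{ij}\colon i<j\}$ go through, and your complexify-and-descend handling of non-real eigenvalues (extend $\langle\cdot,\cdot\rangle$ bilinearly, use conjugation-equivariance of $\Psi_X$) is sound and in fact spells out a point the paper leaves implicit.
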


\begin{proof}
By fixing an orthonormal basis of $\Vv$,  the linear operator $Z:=\Psi_X(Y)$ is represented by a matrix $\hat{Z}:=\hat{X}^\top \hat{Y}-\hat{Y}\hat{X}$ where $\hat{X}$ and $\hat{Y}$ are the matrices of $X$ and $Y$ in the basis of $\Vv$,  and $\hat{X}^\top$ is the transpose matrix of $\hat{X}$.  Using the Kronecker product $\otimes$ we can write $$\hat{Z}=\Phi(\hat{Y}):=(\hat{X}^\top\oplus (-\hat{X}^\top))\mathrm{vec}(\hat{Y}),$$ where $A\oplus B :=( I\otimes A)+(B\otimes I)$ for $A,B\in\Mat(d)$, 
$$
A\otimes B=\begin{bmatrix}
a_{1,1}B&\cdots &a_{1,n} B\\
\vdots&\ddots&\vdots\\
a_{n,1}B&\cdots&a_{n,n}B
\end{bmatrix}\quad\text{and}\quad \mathrm{vec}(A)=\begin{bmatrix}
A_{*,1}\\
\vdots\\
A_{*,n}
\end{bmatrix}.
$$
It is known that $A\oplus B$ has eigenpair $(\lambda_A+\lambda_B,v_A\otimes v_B)$ if $(\lambda_A,v_A)$ is an eigenpair of $A$ and $(\lambda_B,v_B)$ is an eigenpair of $B$ (see~\cite[Theorem 4.4.5]{HJ91}). Any eigenvalue of $A\oplus B$ arises in this way. Since $\hat{X}$ has $d$ distinct eigenvalues, the kernel of $\Phi$ has dimension $d$. Moreover,
$$
\mathrm{Ker}(\Phi)=\{v\otimes v\,\colon\, v\text{ is a left eigenvector of }\hat{X} \}\subset \Sym(d),
$$
where we make the identification $v\otimes v \equiv \begin{bmatrix}
v_1 v&\cdots &v_{d} v
\end{bmatrix}\in \Sym(d)$. 
Therefore,  $\mathrm{Ker}(\Psi_X)\subset \Sym(\Vv)$ and $\dim \mathrm{Ker}(\Psi_X) = d$.

Now consider any subspace $V\subset \Sym(\Vv)$ such that $\Sym(\Vv) = \mathrm{Ker}(\Psi_X)\oplus V$.  Clearly, $\Psi_X$ restricted to $V$ is invertible and $$\dim{V}=\dim \Sym(\Vv)-\dim \mathrm{Ker}(\Psi_X) = \frac{d(d-1)}{2}.$$
\end{proof}

On $\Vv\times\Vv$ we introduce the canonical symplectic form 
$$
\Omega(w_1,w_2)=\langle y_1,z_2\rangle-\langle y_2,z_1\rangle,
$$    
where $w_i=(y_i,z_i)\in\Vv\times\Vv$.  We denote by $\Sp(\Vv\times \Vv)$ the group of symplectic linear operators on $\Vv\times\Vv$.  

Let $C,F\in\Sp(\Vv\times \Vv)$ given by 
$$
C(K)=\begin{bmatrix}
I&0\\K&I
\end{bmatrix}\quad\text{and}\quad
F(\tau)=\begin{bmatrix}
I&\tau I\\0&I
\end{bmatrix}
$$
where $K\in \Sym(\Vv)$,  $\tau\in\Rr$ and $I$ is the identity operator on $\Vv$.  The derivative of the billiard map in Jacobi coordinates has the form,
$$
A(K,\tau):=C(K)F(\tau) = \begin{bmatrix} I&\tau I\\K&I+\tau K\end{bmatrix}\in \Sp(\Vv\times \Vv).
$$  
Throughout this section fix $\tau_1,\tau_2,\tau_3,\tau_4\in\Rr\setminus\{0\}$ and define the map $$B\colon \Sym(\Vv)^4\to\Sp(\Vv\times \Vv)$$ by
\begin{equation}\label{map B}
(K_1,K_2,K_3,K_4)\mapsto A(K_4,\tau_4)A(K_3,\tau_3)A(K_2,\tau_2)A(K_1,\tau_1).
\end{equation}
We also define the following linear operators on $\Vv$:
\begin{equation}\label{def Omega}
\Omega(K_2):=K_2+(1/\tau_2+1/\tau_3)I
\end{equation}
and
\begin{equation}\label{def Delta}
\Delta(K_2,K_3):=(K_3+(1/\tau_3+1/\tau_4)I)(K_2+(1/\tau_2+1/\tau_3)I),
\end{equation} 
where $K_2$ and $K_3$ belong to $\Sym(\Vv)$. 

\begin{defn}\label{def F property}
We say that $(K_2,K_3)\in \Sym(\Vv)^2$ satisfy the \textit{$F$-property} if $\Omega(K_2)$ is invertible and $\Delta(K_2,K_3)$ has $d$ distinct eigenvalues.
\end{defn}
\medskip

The following result is crucial in the proof of Franks' lemma.
\begin{lemma}\label{lem:franks}
If $K=(K_1,K_2,K_3,K_4)\in \Sym(\Vv)^4$ such that $(K_2,K_3)$ satisfy the $F$-property, then $DB(K)$ has full rank and $B$ is a submersion at $K$.
\end{lemma}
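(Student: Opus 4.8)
The plan is to compute the differential $DB(K)$ at a point $K=(K_1,K_2,K_3,K_4)$ satisfying the $F$-property and show it is surjective onto $T_{B(K)}\Sp(\Vv\times\Vv)$; since $\dim\Sym(\Vv)^4 = 4\cdot\frac{d(d+1)}{2} = 2d(d+1)$ and $\dim\Sp(\Vv\times\Vv) = d(2d+1)$, a rank count shows full rank is exactly surjectivity together with the source being (generically) larger, so it suffices to prove surjectivity. The key structural fact is that each factor $A(K_i,\tau_i) = C(K_i)F(\tau_i)$ depends on $K_i$ only through the lower-left ``shear'' block $C(K_i)$, and $\frac{\partial}{\partial K_i}A(K_i,\tau_i)[\dot K_i] = \begin{bmatrix}0&0\\ \dot K_i&0\end{bmatrix}F(\tau_i)$ inserted into the product. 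Conjugating by the partial products of $B$, the image of $DB(K)$ equals $B(K)$ times the span of four subspaces of $\mathfrak{sp}(\Vv\times\Vv)$, each of the form $\mathrm{Ad}_{g_i}\left(\left\{\begin{bmatrix}0&0\\ Y&0\end{bmatrix}: Y\in\Sym(\Vv)\right\}\right)$ for suitable $g_i\in\Sp(\Vv\times\Vv)$ built from the $A(K_j,\tau_j)$.

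First I would fix notation: write $P_j = A(K_j,\tau_j)\cdots A(K_1,\tau_1)$ for the partial products (with $P_0 = I$), so $B(K) = P_4$ and $DB(K)[\dot K_1,\dots,\dot K_4] = \sum_{i=1}^4 P_4 P_i^{-1}\begin{bmatrix}0&0\\ \dot K_i&0\end{bmatrix}F(\tau_i)P_{i-1}$. Factoring $P_4$ on the left, surjectivity of $DB(K)$ is equivalent to the statement that the four subspaces $W_i := \mathrm{Ad}_{P_{i-1}^{-1}}\!\left(\mathrm{Ad}_{F(\tau_i)^{-1}}\!\left\{\begin{bmatrix}0&0\\ Y&0\end{bmatrix}\right\}\right)$ — more precisely, $P_i^{-1}\begin{bmatrix}0&0\\ \dot K_i&0\end{bmatrix}F(\tau_i)P_{i-1}$ ranging over $\dot K_i\in\Sym(\Vv)$ — together span $\mathfrak{sp}(\Vv\times\Vv)$. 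A clean way to organize this is to note that $\mathfrak{sp}(\Vv\times\Vv)$ decomposes (as a vector space) into the three pieces: lower-triangular nilpotent $\mathfrak{n}^- = \left\{\begin{bmatrix}0&0\\ Y&0\end{bmatrix}:Y\in\Sym\right\}$, upper-triangular nilpotent $\mathfrak{n}^+ = \left\{\begin{bmatrix}0&Z\\0&0\end{bmatrix}:Z\in\Sym\right\}$, and the ``diagonal'' Levi part $\mathfrak{l} = \left\{\begin{bmatrix}X&0\\0&-X^*\end{bmatrix}:X\in L(\Vv)\right\}$, of dimensions $\frac{d(d+1)}{2}$, $\frac{d(d+1)}{2}$, $d^2$ respectively, summing to $d(2d+1)$.

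The computational heart is to explicitly conjugate one copy of $\mathfrak{n}^-$ by the relevant products. The first term ($i=1$, with $P_0=I$) gives $F(\tau_1)^{-1}\mathfrak{n}^-F(\tau_1)$; a direct $2\times2$ block computation with $F(\tau_1)^{-1}=F(-\tau_1)$ shows $F(-\tau_1)\begin{bmatrix}0&0\\ Y&0\end{bmatrix}F(\tau_1) = \begin{bmatrix}-\tau_1 Y & -\tau_1^2 Y\\ Y & \tau_1 Y\end{bmatrix}$, so the $i=1$ contribution already supplies all of $\mathfrak{n}^-$ (the lower-left block is $Y$, arbitrary) plus some lower-triangular ``bleed.'' The $i=4$ term, being the outermost, contributes $P_3^{-1}\mathfrak{n}^-F(\tau_4)P_3$; the cleanest choice is to use $i=1$ to get $\mathfrak{n}^-$ and then use the $i=2,3$ terms — where the $F$-property on $(K_2,K_3)$ enters — to fill in $\mathfrak{l}$ and $\mathfrak{n}^+$. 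Specifically, after reducing modulo $\mathfrak{n}^-$ (already covered), the $i=2$ and $i=3$ contributions, projected to $\mathfrak{l}\oplus\mathfrak{n}^+$, should be governed by operators of the shape ``$\Psi_X$'' from \eqref{eq:mapPsi} applied with $X = \Delta(K_2,K_3)$ or related to $\Omega(K_2)$: invertibility of $\Omega(K_2)$ makes the relevant intermediate factor a nondegenerate shear, and the $d$ distinct eigenvalues of $\Delta(K_2,K_3)$ feed directly into Lemma~\ref{lem:simple spectrum} to show the resulting map onto $\mathfrak{l}$ (the $L(\Vv)$ part) has the needed rank, with the leftover $\frac{d(d-1)}{2}$ dimensions in $\mathrm{Ker}\,\Psi_X\subset\Sym$ being picked up by the $\mathfrak{n}^+$ blocks. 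Assembling: $\mathfrak{n}^-$ from $i=1$, the symmetric part of $\mathfrak{l}$ and $\mathfrak{n}^+$ from the $\Psi_{\Delta}$-image (Lemma~\ref{lem:simple spectrum}) via $i=2,3$, and the skew/remaining part of $\mathfrak{l}$ from the interaction of $i=2,3$ with the invertible $\Omega(K_2)$ — these together span $\mathfrak{sp}(\Vv\times\Vv)$, giving surjectivity, hence full rank and the submersion property.

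\textbf{Main obstacle.} The genuinely delicate step is the bookkeeping in the previous paragraph: verifying that the images of the four ``shear insertion'' maps, after conjugation by the partial products $P_i$, actually fit together with no unwanted overlap so that their sum is all of $\mathfrak{sp}(\Vv\times\Vv)$ — equivalently, identifying precisely which conjugated block-subspace carries the Levi part and checking that under the $F$-property hypothesis the composite linear map ($\dot K_2,\dot K_3$) $\mapsto$ (projection to $\mathfrak{l}\oplus\mathfrak{n}^+$) is onto, which is exactly where Lemma~\ref{lem:simple spectrum} with $X=\Delta(K_2,K_3)$ and the invertibility of $\Omega(K_2)$ must be invoked in the right order. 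The $2\times2$-block conjugation computations are routine once set up, but getting the algebra to land on ``$\Psi_{\Delta(K_2,K_3)}$'' and ``$\Omega(K_2)$ invertible'' cleanly — so that Definition~\ref{def F property} is seen to be exactly the condition needed — is the crux.
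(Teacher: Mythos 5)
Your plan reorganizes the argument around showing surjectivity of $DB(K)$ onto $\mathfrak{sp}(\Vv\times\Vv)$ via the triangular decomposition $\mathfrak{n}^-\oplus\mathfrak{l}\oplus\mathfrak{n}^+$, which is a legitimate and potentially cleaner packaging than the paper's approach (which restricts $\dot K_1$ to the $\tfrac{d(d-1)}{2}$-dimensional subspace $V$ of Lemma~\ref{lem:simple spectrum} so that the source dimension exactly matches $\dim\Sp(\Vv\times\Vv)$, and then proves linear independence of the four families of image vectors by block-by-block elimination after multiplying by $A(K_4,\tau_4)^{-1}$ on the left and $F(\tau_1)^{-1}$ on the right). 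However, there are two substantial problems with the proposal as written.

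First, a dimension count shows the plan cannot close. Your ``Assembling'' paragraph builds $\mathfrak{sp}(\Vv\times\Vv)$ from the $i=1$, $i=2$ and $i=3$ contributions only, having earlier set the $i=4$ term aside. Those three inputs supply at most $3\cdot\tfrac{d(d+1)}{2}$ dimensions, while $\dim\mathfrak{sp}(\Vv\times\Vv)=d(2d+1)$; since $3\cdot\tfrac{d(d+1)}{2}<d(2d+1)$ for every $d\geq 2$, three of the four variables cannot span the target. All four $\dot K_i$ are needed, and any correct version of the argument must say explicitly which piece of $\mathfrak{sp}$ the $\dot K_4$ direction accounts for. (In the paper's normalization the $\dot K_4$-images $\hat W^{(i,j)}$ have vanishing second column, and the $(1,1)$-block equation is what kills the coefficients $\theta_{i,j}$; nothing else does.)

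Second, even granting a repaired dimension budget, the crux of the lemma is precisely the computation you defer. Everything hinges on showing that after the appropriate conjugations the $(1,2)$-block of the $\dot K_1$-image is $\tau_2\tau_3\tau_4\,\dot K_1\bigl(\Delta(K_2,K_3)-\tfrac{1}{\tau_3^2}I\bigr)$ (up to the factor $-\tau_4$ from the paper's normalization), so that the vanishing of a linear combination forces $\Psi_{\Delta}(H)=0$ for the symmetric $H$ built from the $\alpha$-coefficients, and that the $(2,2)$-block of the $\dot K_2$-images carries the factor $\Omega(K_2)$ on the right. Your sketch gestures at these facts (``should be governed by operators of the shape $\Psi_X$'', ``feed directly into Lemma~\ref{lem:simple spectrum}'') but does not derive either block formula, does not identify which block equation controls which coefficient family, and explicitly flags the ``bookkeeping'' as the open obstacle. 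That bookkeeping is the lemma; without it the proof is not there.
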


\begin{proof}
Let $\Omega=\Omega(K_2)$ and $\Delta=\Delta(K_2,K_3)$ as defined in \eqref{def Omega} and \eqref{def Delta}, respectively.  Fix a basis $\{E_{i,j}\}_{1\leq i\leq j\leq d}$ for $\Sym(\Vv)$.  By Lemma~\ref{lem:simple spectrum},  there is a subspace $V$ of $\Sym(\Vv)$ of dimension $\frac{d(d-1)}{2}$ such that $\Psi_\Delta$ (as defined in \eqref{eq:mapPsi}) restricted to $V$ is invertible.  Let $\{V_j\}$ denote a basis for $V$ and define,
\begin{align*}
X^{(j)}&:=DB(K)(V_j,0,0,0),\\
Y^{(i,j)}&=DB(K)(0,E_{i,j},0,0),\\
Z^{(i,j)}&=DB(K)(0,0,E_{i,j},0),\\
W^{(i,j)}&=DB(K)(0,0,0,E_{i,j}).
\end{align*}
Since
\begin{equation*}\label{eq:match dimensions}
\dim{V}+ 3\times \dim{\Sym(\Vv)}=\dim{\Sp(\Vv)},
\end{equation*}
to prove that $DB(K)$ has full rank it is sufficient to show that the tangent vectors defined above are linearly independent, i.e., 
$$
\sum_{j}\alpha_{j} X^{(j)}+\sum_{i\leq j}\beta_{i,j} Y^{(i,j)}+\sum_{i\leq j}\gamma_{i,j} Z^{(i,j)}+\sum_{i\leq j}\theta_{i,j} W^{(i,j)}=0
$$
implies that $\alpha_{j}=\beta_{i,j}=\gamma_{i,j}=\theta_{i,j}=0$. Taking into account that 
$$
DA(K,\tau)(S)=\begin{bmatrix}
0&0\\S&\tau S
\end{bmatrix},\quad S\in \Sym(\Vv)
$$
we compute,
\begin{align*}
X^{(j)} &= \begin{bmatrix}
0&0\\V_{j}&\tau_4 V_{j}
\end{bmatrix}\begin{bmatrix}I&\tau_3 I\\K_3&I+\tau_3 K_3\end{bmatrix}\begin{bmatrix}I&\tau_2 I\\K_2&I+\tau_2 K_2\end{bmatrix}\begin{bmatrix}I&\tau_1 I\\K_1&I+\tau_1 K_1\end{bmatrix},\\
Y^{(i,j)} &= \begin{bmatrix}I&\tau_4 I\\K_4&I+\tau_4 K_4\end{bmatrix}\begin{bmatrix}
0&0\\E_{i,j}&\tau_3 E_{i,j}
\end{bmatrix}\begin{bmatrix}I&\tau_2 I\\K_2&I+\tau_2 K_2\end{bmatrix}\begin{bmatrix}I&\tau_1 I\\K_1&I+\tau_1 K_1\end{bmatrix},\\
Z^{(i,j)} &=  \begin{bmatrix}I&\tau_4 I\\K_4&I+\tau_4 K_4\end{bmatrix}\begin{bmatrix}I&\tau_3 I\\K_3&I+\tau_3 K_3\end{bmatrix}\begin{bmatrix}
0&0\\E_{i,j}&\tau_2 E_{i,j}
\end{bmatrix}\begin{bmatrix}I&\tau_1 I\\K_1&I+\tau_1 K_1\end{bmatrix},\\
W^{(i,j)} &= \begin{bmatrix}I&\tau_4 I\\K_4&I+\tau_4 K_4\end{bmatrix}\begin{bmatrix}I&\tau_3 I\\K_3&I+\tau_3 K_3\end{bmatrix}\begin{bmatrix}I&\tau_2 I\\K_2&I+\tau_2 K_2\end{bmatrix}\begin{bmatrix}
0&0\\E_{i,j}&\tau_1 E_{i,j}
\end{bmatrix}.
\end{align*}
Define $\hat{X}^{(j)}:=A(K_4,\tau_4)^{-1}X^{(j)}F(\tau_1)^{-1}$, $\hat{Y}^{(i,j)}:=A(K_4,\tau_4)^{-1}Y^{(i,j)}F(\tau_1)^{-1}$, etc. Using the fact that
$$
A(K,\tau)^{-1}=\begin{bmatrix}I+\tau K&-\tau I\\-K&I\end{bmatrix}\quad\text{and}\quad F(\tau_1)^{-1}=F(-\tau_1)
$$
we find that
\begin{align*}
\hat{X}^{(j)}&=\begin{bmatrix}
-\tau_4 X^{(j)}_{2,1}&-\tau_4 X^{(j)}_{2,2}\\
X^{(j)}_{2,1}&X^{(j)}_{2,2}
\end{bmatrix}\\
\hat{Y}^{(i,j)}&=\begin{bmatrix}
0&0\\
\hat{Y}^{(i,j)}_{2,1}& E_{i,j}((\tau_2+\tau_3)I+\tau_2\tau_3K_2)
\end{bmatrix}\\
\hat{Z}^{(i,j)}&=\begin{bmatrix}
\tau_3E_{i,j}(I+\tau_2K_1)&\tau_2\tau_3 E_{i,j}\\
(I+\tau_3K_3)E_{i,j}(I+\tau_2K_1)& \tau_2(I+\tau_3K_3)E_{i,j}
\end{bmatrix}\\
\hat{W}^{(i,j)}&=\begin{bmatrix}
((\tau_2+\tau_3)I+\tau_2\tau_3K_2)E_{i,j}&0\\
(\tau_2 K_3+(I+\tau_3K_3)(I+\tau_2K_2))E_{i,j}& 0
\end{bmatrix}
\end{align*}
where $\hat{Y}^{(i,j)}_{2,1}$, $X_{2,1}^{(j)}$ and $X_{2,2}^{(j)}$ can be explicitly computed, but only the expression for $X_{2,2}^{(j)}$ will be needed,
\begin{align*}
X_{2,2}^{(j)}&=V_j(\tau_2 I +(I+\tau_2K_2)\tau_3+\tau_4(I+\tau_3K_3)(I+\tau_2K_2)+\tau_2\tau_4K_3)\\
&=\tau_2\tau_3\tau_4V_j\left(\Delta- \frac{1}{\tau_3^2} I\right).
\end{align*}
Hence, in the $(1,2)$-th block we have the equation
$$
-\tau_2\tau_3\tau_4^2\left(\sum_{j}\alpha_{i}V_j\right)\left(\Delta- \frac{1}{\tau_3^2} I\right)+\tau_2\tau_3\sum_{i\leq j}\gamma_{i,j}E_{i,j}=0.
$$
Thus, we must have  $H\Delta\in \Sym(\Vv)$ where $H=\sum_{j}\alpha_{j}V_j\in V$. Equivalently, $\Psi_\Delta(H)=0$. By Lemma~\ref{lem:simple spectrum}, $\Psi_\Delta$ restricted to $V$ is invertible, hence $H=0$, i.e., $\alpha_{j}=0$ for every $j$. This also implies that $\gamma_{i,j}=0$ for every $i\leq j$. Now, looking at the $(2,2)$-th block we have the equation $\tau_2\tau_3\sum_{i\leq j}\beta_{i,j}E_{i,j}\Omega=0$. Since $\Omega$ is invertible, we conclude that $\beta_{i,j}=0$ for every $i\leq j$. Finally, looking at the $(1,1)$-th block we get, by the same argument, that $\theta_{i,j}=0$ for every $i\leq j$. 
This concludes the proof.
\end{proof}

\begin{remark}
When $d=1$, we have $K_2=\frac{2k_2}{\cos\theta_2}$ where 
$k_2$ is the curvature of the billiard table at the collision point $p_2$ and $\theta_2$ the reflection angle at $p_2$, and the pair $(K_2,K_3)$ satisfies the $F$-property iff $\Omega(K_2)$ is invertible, i.e. iff 
$$
K_2\neq-\left(\frac{1}{\tau_2}+\frac{1}{\tau_3}\right).
$$
This is precisely the non-focusing condition for the three consecutive collisions $\{p_1,p_2,p_3\}\subset \Gamma$ used in~\cite{visscher-2015}.
\end{remark}

\begin{lemma}\label{lem:Franks generic}
The set of pairs $(K_2,K_3)\in \Sym(\Vv)^2$ that satisfy the $F$-property is open and dense in $\Sym(\Vv)^2$.
\end{lemma}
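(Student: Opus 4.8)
The plan is to treat the two clauses of the $F$-property separately, using the observation that, with $c:=\tfrac1{\tau_3}+\tfrac1{\tau_4}$, one has $\Delta(K_2,K_3)=(K_3+cI)\,\Omega(K_2)$ by \eqref{def Omega} and \eqref{def Delta}. Openness is immediate: ``$\Omega(K_2)$ invertible'' is the non-vanishing of $\det\Omega(K_2)$, and ``$\Delta(K_2,K_3)$ has $d$ distinct (complex) eigenvalues'' is the non-vanishing of the discriminant of the characteristic polynomial of $\Delta(K_2,K_3)$; both are polynomial, hence continuous, functions of $(K_2,K_3)$, so the $F$-property defines an open set, and I would dispose of this in a line.

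For density I would argue fibrewise in $K_2$. First, $\{K_2\in\Sym(\Vv):\Omega(K_2)\in\GL(\Vv)\}$ is dense, since replacing $K_2$ by $K_2+tI$ turns $\Omega(K_2)$ into $\Omega(K_2)+tI$, which is invertible for all but finitely many $t$. It then suffices to fix $K_2$ with $\Omega:=\Omega(K_2)$ invertible and show that, as $L:=K_3+cI$ ranges over $\Sym(\Vv)$, the set where $L\Omega$ has $d$ distinct eigenvalues is dense. Since $\Omega$ is self-adjoint and invertible, Sylvester's law of inertia gives $P\in\GL(\Vv)$ with $P^\top\Omega P=D$, $D=\diag(\pm1,\dots,\pm1)$; then $L\Omega$ is conjugate to $P^{-1}(L\Omega)P=\widetilde L D$ with $\widetilde L:=P^{-1}L(P^{-1})^\top$, and $L\mapsto\widetilde L$ is a linear automorphism of $\Sym(\Vv)$. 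Thus the claim reduces to showing that the set of $\widetilde L\in\Sym(\Vv)$ for which $\widetilde L D$ has $d$ distinct eigenvalues is dense.

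For this last point the idea is that $\widetilde L\mapsto\widetilde L D$ is an injective linear map of $\Sym(\Vv)$ onto a linear subspace $W\subset\Mat(d)$, and the discriminant of the characteristic polynomial is a polynomial function on $W$ which does not vanish identically: for $\widetilde L=\diag(\mu_1,\dots,\mu_d)$ one gets $\widetilde L D=\diag(\mu_1\varepsilon_1,\dots,\mu_d\varepsilon_d)$, with $\varepsilon_i=\pm1$ the diagonal entries of $D$, and the choice $\mu_i=i$ makes the entries $\mu_i\varepsilon_i$ pairwise distinct since their absolute values are $1,\dots,d$. Hence the bad locus is a proper algebraic subvariety of $W\cong\Rr^{d(d+1)/2}$, so its complement is open and dense in $W$, and pulling back yields openness and density of the set in question. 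Combining the two reductions: given $(K_2^0,K_3^0)$, one first perturbs $K_2^0$ to make $\Omega(K_2)$ invertible, then perturbs $K_3^0$ within its fibre to make $\Delta(K_2,K_3)$ have simple spectrum, landing arbitrarily close to $(K_2^0,K_3^0)$ inside the open $F$-property set.

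The only genuinely non-formal point — and the place the argument could go wrong if handled carelessly — is that $\Delta(K_2,K_3)$ is a product of self-adjoint operators and hence \emph{not} self-adjoint, so one cannot read off simplicity of its spectrum from the spectral theorem; the congruence reduction $P^\top\Omega P=D$ is exactly what turns ``$\Delta$ has $d$ distinct eigenvalues'' into the non-vanishing of a single explicit polynomial on a linear space of matrices, which is the standard route to an open dense set.
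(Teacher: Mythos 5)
Your proof is correct, and it takes a genuinely different route from the paper's. The paper first observes that $\mathcal{N}:=\GL(\Vv)\cap\mathcal{S}(\Vv)$ (invertible operators with simple spectrum) is open and dense in $L(\Vv)$, and then argues that the preimage of $\mathcal{N}$ under the multiplication map $\varphi(A_1,A_2)=A_1A_2$ is open and dense in $\GL(\Vv)^2$, using that left multiplication by a fixed $A$ is an open map of $\GL(\Vv)$ (invoking the open mapping theorem for locally compact groups); it then passes to the corresponding statement about $\MM_1\subset\Sym(\Vv)^2$, although the restriction from $\GL(\Vv)^2$ to the nowhere-dense slice of self-adjoint pairs is left implicit. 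Your argument sidesteps this entirely and works inside $\Sym(\Vv)^2$ from the start: you perturb $K_2$ along $K_2+tI$ to make $\Omega(K_2)$ invertible, apply Sylvester's law of inertia to write $P^\top\Omega P=D$ with $D$ a signature matrix, and thereby convert ``$L\Omega$ has simple spectrum'' for $L\in\Sym(\Vv)$ into the nonvanishing of the discriminant polynomial on the linear subspace $\{\tilde L D:\tilde L\in\Sym(\Vv)\}\subset\Mat(d)$; the diagonal witness $\tilde L=\diag(1,\dots,d)$ then shows this polynomial is not identically zero, so its complement is open and dense. What your approach buys is an elementary, self-contained density argument confined to $\Sym(\Vv)^2$, at the modest cost of the congruence-diagonalization step; what the paper's approach buys is brevity, but at the cost of a topological-group openness argument that is stated for $\GL(\Vv)^2$ rather than for the symmetric slice one actually needs. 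Your closing remark, that the whole subtlety is that $\Delta$ is a product of self-adjoint operators and hence not itself self-adjoint, and that Sylvester's law is exactly what repairs this, correctly identifies the delicate point.
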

\begin{proof}
Clearly, the set of invertible linear operators $\GL(\Vv)$ is open and dense in $L(\Vv)$.  Moreover, the set $\mathcal{S}(\Vv)$ of linear operators with simple spectrum, i.e., distinct eigenvalues, is also open and dense in $L(\Vv)$.  Thus $\mathcal{N}:=\GL(\Vv)\cap \mathcal{S}(\Vv)$ is open and dense.  Now, consider the map $\varphi\colon(A_1,A_2)\mapsto A_1 A_2$ mapping $\GL(\Vv)^2$ to $\GL(\Vv)$.  By continuity, $\varphi^{-1}(\mathcal{N})$ is open.   Moreover,  $\varphi^{-1}(\mathcal{N})$ is dense. Indeed, suppose by contradiction that $\varphi^{-1}(\mathcal{N})\cap (\mathcal{V}_1\times \mathcal{V}_2)=\emptyset$ for some open subsets $\mathcal{V}_i$ of $\GL(\Vv)$.  Given $A\in\mathcal{V}_1$,  the map $\varphi_A\colon \GL(\Vv) \to \GL(\Vv)$ defined by $\varphi_A(B)=\varphi(A,B)$ is a continuous surjective homomorphism,  thus it is open by the
open mapping theorem for locally compact groups (see e.g.  \cite[Theorem~5.29]{HR63}).  This implies that $\varphi_A(\mathcal{V}_2)$ is open in $\GL(\Vv)$, and since $\mathcal{N}$ is dense,  we can find $C\in \mathcal{N}\cap \varphi_A(\mathcal{V}_2)$ and $B\in \mathcal{V}_2$ such that $AB=C$, which contradicts the fact that $\varphi^{-1}(\mathcal{N})$ does not intersect $\mathcal{V}_1\times \mathcal{V}_2$.
Therefore,  the set 
$$
\MM_1=\{ (K_2,K_3)\in  \Sym(\Vv)^2\colon \Delta(K_2,K_3)\text{  has simple spectrum}\}
$$
is open and dense in  $\Sym(\Vv)^2$. By the definition of $\Omega(K_2)$ is is also clear that 
$$
\MM_2=\{ K_2\in \Sym(\Vv)\colon  \Omega(K_2)\text{ is invertible}\}
$$
is also open and dense in $\Sym(\Vv)$.  Hence,  $\mathcal{M} = \MM_1 \cap (\MM_2\times  \Sym(\Vv))$ is open and dense as we wanted to show.
\end{proof}

\subsection{Proof of Theorem~\ref{Franks}}
%\section{Linear independence}
\label{sec:linear independence}

Recall $K(x)\colon v^\perp\to v^\perp$ from \eqref{definition of K} where $x=(p,v)\in M$. We say that an orbit segment $\gamma=\{x_i\}_{i=0}^{n}\subset \Int(M)$ of $f$ is \textit{$F$-admissible} if $n\geq3$,  $\pi_1(\gamma)$ consists of $n+1$ points in $\Gamma_\phi$ and there is $2\leq k <n$ such that  $(K(x_k),R_{p_{k+1}}^{-1}K(x_{k+1}) R_{p_{k+1}})$ satisfy the $F$-property as linear operators in $\Sym(v_k^\perp)$.

\begin{theorem}\label{thm:baby franks}
Let $r\in\{2,3,\ldots,\infty\}$.  Let $\phi\in \emb r$ and consider an $F$-admissible orbit segment $\gamma=\{x_i\}_{i=0}^{n}\subset \Int(M_\phi)$ of $f_\phi$.  For every $\varepsilon>0$ there is $\delta>0$ such that for any symplectic linear map $A\colon T_{x_0}M_\phi \to T_{x_n}M_\phi $ which is $\delta$-close to $Df^n_\phi(x_0)$, 
there is $u\in C^\infty(S,\Rr^{d+1})$ which is $\varepsilon$-$C^2$-close to $0$ such that 
\begin{enumerate}
\item $\phi_u:=\phi+u\in \emb k$,
\item $\gamma$ is an orbit segment of both $f_\phi$ and $f_{\phi_u}$,
\item $Df^n_{\phi_u}(x_0)=A$,
\item the perturbation $u$ can be chosen to vanish outside a arbitrary small neighbourhood of four consecutive points in $\pi_1(\gamma)$.
\end{enumerate}
\end{theorem}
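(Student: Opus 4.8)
The plan is to reduce Theorem~\ref{thm:baby franks} to the linear-algebra result Lemma~\ref{lem:franks} by realising the needed perturbations of shape operators through body perturbations (Lemma~\ref{lem:perturbing body}) and reading off the effect on the derivative from the Jacobi-coordinate form of $Df_\phi$ (Lemma~\ref{lemma derivative billiard map Jacobi coord}). Write $\gamma=\{x_i\}_{i=0}^n$, $x_i=(p_i,v_i)$, and let $k$ be the index supplied by $F$-admissibility; the four points we perturb are $p_{k-1},p_k,p_{k+1},p_{k+2}$. By Lemma~\ref{lemma derivative billiard map Jacobi coord}, in Jacobi coordinates $Df_\phi(x_i)=C(K_\phi(x_{i+1}))\diag(R_{p_{i+1}},R_{p_{i+1}})F(\tau(x_i))$, with $R_{p_{i+1}}$ the reflection in $T_{p_{i+1}}\Gamma_\phi$ and $\tau(x_i)>0$ since $\gamma\subset\Int(M_\phi)$. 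Multiplying the $n$ factors and pushing the reflection blocks out to the two ends (they commute past the $F$-factors and conjugate the $C$-factors) one obtains fixed linear symplectic isomorphisms $\Xi_1\colon T_{x_0}M_\phi\to v_{k-1}^\perp\times v_{k-1}^\perp$ and $\Xi_2$ into $T_{x_n}M_\phi$, and fixed orthogonal conjugates $\widehat K_1,\widehat K_4$ of $K_\phi(x_{k-1}),K_\phi(x_{k+2})$, such that
\[
Df^n_\phi(x_0)=\Xi_2\circ B\big(\widehat K_1,\ K_\phi(x_k),\ R_{p_{k+1}}^{-1}K_\phi(x_{k+1})R_{p_{k+1}},\ \widehat K_4\big)\circ\Xi_1,
\]
where $B$ is the map \eqref{map B} with the four nonzero flight times $\tau(x_{k-2}),\tau(x_{k-1}),\tau(x_k),\tau(x_{k+1})$. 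The second and third arguments here are exactly the pair appearing in the definition of $F$-admissibility.

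Next I would realise independent arbitrary small perturbations of $K_\phi(x_{k-1}),K_\phi(x_k),K_\phi(x_{k+1}),K_\phi(x_{k+2})$. For $i\in\{k-1,k,k+1,k+2\}$ choose $\psi_i\in C^\infty(S,\Rr)$ supported in a small neighbourhood of $p_i$, with the four supports pairwise disjoint and disjoint from $\pi_1(\gamma)\setminus\{p_i\}$, and with $\psi_i(\phi^{-1}(p_i))=0$, $D\psi_i(\phi^{-1}(p_i))=0$; set $u=\sum_i\psi_i N_\phi(p_i)$ and $\phi_u=\phi+u$. By Lemma~\ref{lem:perturbing body}, for $\|u\|_{C^2}$ small $\phi_u\in\emb r$, $\|u\|_{C^2}\le\sum_i\|\psi_i\|_{C^2}$, and the second fundamental form changes at $p_i$ by $D^2(\psi_i\circ\phi^{-1})(p_i)$, an arbitrary symmetric bilinear form on $T_{p_i}\Gamma_\phi$. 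Because $\psi_i$ vanishes to second order at $p_i$, the point $p_i$, the tangent plane and the normal $N_\phi(p_i)$ are unchanged, so $\gamma$ remains an orbit segment of $f_{\phi_u}$ and every flight time and every reflection $R_{p_j}$ in the factorisation above is untouched; thus only $K_\phi(x_{k-1}),\dots,K_\phi(x_{k+2})$ change, and they change independently. Finally, by \eqref{definition of K} the change in $K_\phi(x_i)$ is the image of the operator of $D^2(\psi_i\circ\phi^{-1})(p_i)$ under $S\mapsto -2\langle v_i,N_\phi(p_i)\rangle(P_{N_\phi(p_i)}^{v_i})^*S\,P_{N_\phi(p_i)}^{v_i}$, and this map is a linear isomorphism $\Sym(T_{p_i}\Gamma_\phi)\to\Sym(v_i^\perp)$ since $P_{N_\phi(p_i)}^{v_i}$ restricts to an isomorphism $v_i^\perp\to N_\phi(p_i)^\perp$ and $\langle v_i,N_\phi(p_i)\rangle>0$. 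Hence every sufficiently small $(\delta K_{k-1},\dots,\delta K_{k+2})\in\Sym(v_{k-1}^\perp)\times\cdots\times\Sym(v_{k+2}^\perp)$ is realised by such a $u$, with $\|u\|_{C^2}$ controlled by a constant times $\max_i\|\delta K_i\|$.

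The last step is the submersion argument. Define $\Upsilon(\delta K_{k-1},\dots,\delta K_{k+2})=Df^n_{\phi_u}(x_0)$ near $0$; by the first two steps this equals $\Xi_2\circ B\big(\widehat K_1+\delta\widehat K_1,\ K_\phi(x_k)+\delta K_k,\ \cdots\big)\circ\Xi_1$ with $\Xi_1,\Xi_2$ and the flight times frozen, and where each $\delta K_i$ may be any small element of $\Sym(v_i^\perp)$ (up to the fixed isomorphisms above). By hypothesis the pair $(K_\phi(x_k),R_{p_{k+1}}^{-1}K_\phi(x_{k+1})R_{p_{k+1}})$ satisfies the $F$-property, so Lemma~\ref{lem:franks} gives that $B$ is a submersion at the base point; composing with the fixed symplectic isomorphisms $\Xi_1,\Xi_2$ shows $\Upsilon$ is a submersion at $0$ with $\Upsilon(0)=Df^n_\phi(x_0)$ and values in the manifold of symplectic isomorphisms $T_{x_0}M_\phi\to T_{x_n}M_\phi$. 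The submersion theorem then provides a neighbourhood $W$ of $Df^n_\phi(x_0)$ and a $C^\infty$ section $\sigma$ of $\Upsilon$ with $\sigma(Df^n_\phi(x_0))=0$. Given $\varepsilon>0$, continuity of $\sigma$ and of $\delta K\mapsto u$ furnishes $\delta>0$ such that whenever a symplectic $A$ is $\delta$-close to $Df^n_\phi(x_0)$ one has $A\in W$ and the perturbation $u$ realising $\delta K=\sigma(A)$ satisfies $\|u\|_{C^2}<\varepsilon$ and $\phi_u\in\emb r$; then $Df^n_{\phi_u}(x_0)=\Upsilon(\sigma(A))=A$, and properties (1)--(4) hold by construction.

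I expect the main obstacle to be bookkeeping rather than any deep point, since Lemma~\ref{lem:franks} already carries the analytic weight. The care is concentrated in the first step --- writing $Df^n_\phi(x_0)=\Xi_2\circ B(\cdots)\circ\Xi_1$ and tracking exactly how the reflections $R_{p_i}$ get absorbed into $\Xi_1,\Xi_2$ and into the conjugate $R_{p_{k+1}}^{-1}K_\phi(x_{k+1})R_{p_{k+1}}$ that enters the $F$-property --- and in verifying in the second step that a body perturbation supported near $p_i$ and vanishing there to second order disturbs \emph{only} $K_\phi(x_i)$, leaving the orbit segment, the flight times and all the reflections intact. A secondary point is making the final $\delta$ uniform enough to simultaneously force $A\in W$, the $C^2$-smallness of $u$, and $\phi_u\in\emb r$.
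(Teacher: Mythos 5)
Your proof is correct and follows essentially the same route as the paper: factor $Df_\phi^n(x_0)$ in Jacobi coordinates via Lemma~\ref{lemma derivative billiard map Jacobi coord}, isolate the block $B(K_{k-1},\dots,K_{k+2})$ whose $F$-admissibility makes Lemma~\ref{lem:franks} a submersion statement, realize independent small perturbations of the four curvature operators through Lemma~\ref{lem:perturbing body}, and conclude by the submersion (implicit function) theorem. The only cosmetic difference is that you push the reflection blocks to both ends ($\Xi_1,\Xi_2$) while the paper pushes them all to the left and conjugates the $K_i$'s accordingly, and you spell out explicitly that $S\mapsto-2\langle v_i,N_\phi(p_i)\rangle(P_{N_\phi(p_i)}^{v_i})^*SP_{N_\phi(p_i)}^{v_i}$ is an isomorphism $\Sym(T_{p_i}\Gamma_\phi)\to\Sym(v_i^\perp)$, a point the paper leaves implicit.
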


\begin{proof}
Let $x_i=(p_i,v_i)$ for $i=0,\ldots,n$.  According to Lemma~\ref{lemma derivative billiard map Jacobi coord},  the derivative of $f_\phi^n$ at $x_0$ in Jacobi coordinates is the linear map $Df_\phi^n(x_0)\colon v_0^\perp\times v_0^\perp \to v_n^\perp\times v_n^\perp$ given by 
\begin{equation}\label{eq: product derivatives}
Df_\phi^n(x_0)=C(x_n)U(p_n)F(\tau_{n})\cdots C(x_1)U(p_1)F(\tau_1)
\end{equation} 
where
$$
C(x_i)=\begin{bmatrix}
I&0\\K(x_i)&I
\end{bmatrix},  U(p_i)=\begin{bmatrix}
R_{p_i}&0\\0&R_{p_i}
\end{bmatrix}, F(\tau_i)=\begin{bmatrix}
I&\tau_i I\\0&I
\end{bmatrix},
$$
and $\tau_i=\|p_i-p_{i-1}\|>0$ for $i=1,\ldots, n$.  Also recall that $K(x_i)$ is a self-adjoint linear operator on $v_i^{\perp}$ and $R_{p_i}:v_{i-1}^\perp\to v_{i}^\perp$ is the reflection about the hyperplane $N(p_i)^\perp$ which maps $v_{i-1}^\perp$ isometrically onto $v_{i}^\perp$.       

We can rewrite the product \eqref{eq: product derivatives} in the following way
$$
Df_\phi^n(x_0) = U(p_n)\cdots U(p_1)A(K_n,\tau_n)\cdots A(K_1,\tau_1)
$$
where 
$$
A_i:=A(K_i,\tau_i)=\begin{bmatrix}
I&\tau_i I\\K_i&I+\tau_i K_i
\end{bmatrix} \in \Sp(v_0^\perp\times v_0^\perp)
$$
and $K_i := (R_{p_i}\cdots R_{p_1})^{-1} K(x_i)R_{p_i}\cdots R_{p_1}\in\Sym(v_0^\perp)$.  By hypothesis,  there is $2\leq k<n$ such that $(K_k,  K_{k+1})$ satisfy the $F$-property.  Hence,  by Lemma~\ref{lem:franks},  the map $B$ defined in \eqref{map B} with $\Vv:=v_0^\perp$ is a submersion at $K:=(K_{k-1},K_k,K_{k+1},K_{k+2})$.  This shows that for any $\varepsilon>0$ there is a $\delta>0$ such that any symplectic linear map $\Pi\in \Sp(v_0^\perp\times v_0^\perp)$ which is $\delta$-close to $A_n\cdots A_1$ may be realized as
$$
\Pi=A_n\cdots A_{k+3}B(\hat{K}) A_{k-2}\cdots A_{1}
$$
by choosing $\hat{K}\in\Sym(v_0^\perp)^4$ which is $\varepsilon$-close to $K$.  By Lemma~\ref{lem:perturbing body},  for each $i\in\{k-1,k,k+1,k+2\}$,  we can realize the curvature $\hat{K}_i$ at $p_i$ by choosing appropriate smooth functions $u_{i}\in C^{\infty}(S,\Rr)$ which are $\varepsilon$-$C^2$-close to $0$ and whose supports are contained in arbitrarily small neighbourhoods of $p_i$.  Finally,  we define the perturbation $u=\sum_{i=k-1}^{k+2}u_i N_\phi(p_i)$ and the rest of the properties follow.
\end{proof}

Given $m\geq4$,  denote by $\YY_m$ the set of $\phi\in\emb r$ such that $f_\phi$ has only a finite number of periodic orbits of period less than or equal to $m$,  each periodic orbit passes only once through each of its reflection points, and any two different periodic orbits have no common reflection point.  By Theorem~\ref{thm: generic},  $\YY_m$ is a $C^r$-residual subset of $\BB^r$.  Now, denote by $\RR_m$ the subset of $\YY_m$ consisting of those $\phi\in \RR_m$ such that every periodic orbit of $f_\phi$ of period $\geq 4$ is $F$-admissible.  Being $F$-admissible means that the pair of curvature operators at two consecutive collision points belongs to an open and dense set (Lemma~\ref{lem:Franks generic}).  Since,  for each $\phi\in\RR_m$, the billiard map $f_\phi$ has only a finite number of periodic orbits of period less than $m$,  by continuity of $\phi\mapsto L_\phi$ we conclude that $\RR_m$ is $C^r$-open relative to $\YY_m$.   Moreover, by Lemma~\ref{lem:perturbing body},  the set $\RR_m$ is also $C^r$-dense.  Hence,  $\RR_m$ is a $C^r$-residual set and the intersection $\RR=\bigcap_{m\geq4}\RR_m$  is also $C^r$-residual.   Finally, Theorem~\ref{Franks} follows from Theorem~\ref{thm:baby franks}.\qed

\section{Proof of Theorem~\ref{thm: hyp}}\label{sec: hyp}

Let $\mathcal{R}_1$ be the $C^\infty$-residual set of Theorem~\ref{Franks} and $\mathcal{R}_2$ the one of Theorem~\ref{thm: generic}.
Hence, one can take a $C^\infty$-residual 
$$
\mathcal{R}=\mathcal{R}_1\cap \mathcal{R}_2
$$
on which there are an infinite number of periodic orbits with arbitrarly large periods, all nondegenerate.

Consider the set of periodic points $\Per_{m_d}(\phi)$ with period $\geq m_d$, and denote its closure by
$$
\Lambda=\Lambda(\phi):=\overline{\Per_{m_d}(\phi)}.
$$

\begin{theorem}\label{thm: lambda uf}
If $\phi\in \RR\cap\FF^2$, then 
$\Lambda(\phi)$ is hyperbolic.
\end{theorem}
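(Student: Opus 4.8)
The plan is to prove the theorem in two stages: first to upgrade the pointwise hyperbolicity of the orbits in $\Per_{m_d}(\phi)$ to a \emph{uniform} hyperbolicity, and then to deduce from this uniformity that the closure $\Lambda(\phi)$ is a hyperbolic set.

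\emph{Stage 1: uniform hyperbolicity of the periodic orbits.} I claim there are $m\in\Nn$ and $\lambda\in(0,1/2)$ such that every $x\in\Per_{m_d}(\phi)$, with its hyperbolic splitting $T_xM_\phi=E^s(x)\oplus E^u(x)$, satisfies
$$
\|Df_\phi^{m}(x)|_{E^s(x)}\|\leq\lambda\quad\text{and}\quad \|Df_\phi^{-m}(x)|_{E^u(x)}\|\leq\lambda,
$$
with $m$ and $\lambda$ independent of $x$ and of its period. The proof is by contradiction and is where the Franks' lemma (Theorem~\ref{Franks}, or its segment version Theorem~\ref{thm:baby franks}, together with Lemma~\ref{lem:perturbing body}) plays the role that the $C^1$ Franks' lemma plays in Mañé's work. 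Since $\phi\in\RR$, part~(3) of Theorem~\ref{thm: generic} gives only finitely many periodic orbits of each finite period, and those of period $\ge m_d$ are hyperbolic because $\phi\in\FF^2\subset\HH$; hence a failure of the uniform estimate would produce periodic points $x_n\in\Per_{m_d}(\phi)$, necessarily of period $p_n\to\infty$, along whose orbits the hyperbolicity degenerates quantitatively. Splitting $O(x_n)$ into blocks of four consecutive reflection points with pairwise disjoint supports — so that the resulting body perturbations add up to something still $C^2$-small — one realizes, via a Mañé-type linear-algebra perturbation of the derivative cocycle along $O(x_n)$ (using Lemma~\ref{lem:perturbing body} to tilt the individual curvature operators and Theorem~\ref{thm:baby franks} to realize the required change of $Df^{p_n}_\phi(x_n)$), a body $\phi_n:=\phi+u_n$ with $\|u_n\|_{C^2}$ arbitrarily small for which $O(x_n)$ is still a periodic orbit (Theorem~\ref{thm:baby franks}(2)) but $Df^{p_n}_{\phi_n}(x_n)$ has an eigenvalue on the unit circle. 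Thus $f_{\phi_n}$ has a non-hyperbolic periodic point of period $p_n\ge m_d$; since $\phi\in\FF^2=\Int_{C^2}\HH$, every body $C^2$-close to $\phi$ lies in $\HH$, so for $n$ large $\phi_n\in\HH$ and all its periodic points of period $\ge m_d$ are hyperbolic — a contradiction. (Membership in $\RR$ is exactly what guarantees the hypotheses of the Franks' lemma along these orbits: each periodic orbit passes only once through each of its reflection points, distinct periodic orbits share no reflection point, and the orbits are $F$-admissible in the sense of Lemma~\ref{lem:Franks generic}.) Finally, $m$ being fixed, $\|Df^m_\phi\|$ is bounded on the compact manifold $M_\phi$; combining this bound with the two displayed estimates one gets a uniform $\theta>0$ with $\angle(E^s(x),E^u(x))\ge\theta$ for all $x\in\Per_{m_d}(\phi)$, since if the angle were small the unit vector obtained by subtracting nearby unit vectors of $E^s(x)$ and $E^u(x)$ would be expanded by $Df^m_\phi(x)$ by an arbitrarily large factor.

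\emph{Stage 2: hyperbolicity of $\Lambda(\phi)$.} The estimates of Stage 1 say precisely that $E^s\oplus E^u$ is an $m$-dominated splitting over the $f_\phi$-invariant set $\Per_{m_d}(\phi)$: one has $\|Df^m_\phi(x)|_{E^s(x)}\|\le\lambda$, while the conorm of $Df^m_\phi(x)|_{E^u(x)}$ is at least $1/\lambda$ (because $f^m x$ is again a periodic point of period $\ge m_d$, so $\|Df^{-m}_\phi(f^m x)|_{E^u(f^m x)}\|\le\lambda$), whence the domination ratio is $\le\lambda^2<1$. By the standard persistence and extension property of dominated splittings, this splitting extends uniquely and continuously to a $Df_\phi$-invariant $m$-dominated splitting $T_{\Lambda}M_\phi=E\oplus F$ over $\Lambda(\phi)=\overline{\Per_{m_d}(\phi)}$, with $E$ and $F$ restricting to $E^s$ and $E^u$ on the periodic points. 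The functions $z\mapsto\|Df^m_\phi(z)|_{E(z)}\|$ and $z\mapsto\|Df^{-m}_\phi(z)|_{F(z)}\|$ are continuous on $\Lambda(\phi)$ and bounded by $\lambda<1/2$ on the dense subset $\Per_{m_d}(\phi)$, hence everywhere on $\Lambda(\phi)$. Therefore $\Lambda(\phi)$ carries a continuous $Df_\phi$-invariant splitting satisfying \eqref{uh}, i.e.\ it is hyperbolic.

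The main obstacle is Stage 1, and within it the marriage of the billiards Franks' lemma — which is localized at four reflection points but \emph{not} local in phase space, and available only on the residual set $\RR$ — with the Mañé-type linear-algebra argument that converts a quantitative loss of hyperbolicity along a long periodic orbit into a genuine eigenvalue on the unit circle for the \emph{perturbed} billiard, all while keeping the perturbed body $C^2$-close to $\phi$ and the periodic orbit unchanged. Stage 2 is, by comparison, routine once the uniform estimates are in hand.
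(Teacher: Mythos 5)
Your argument follows the same two-stage structure as the paper's proof: first upgrade the hyperbolicity of $\Per_{m_d}(\phi)$ to \emph{uniform} hyperbolicity by contradiction (Franks' lemma for billiards turns a small symplectic perturbation of the cocycle along a long periodic orbit into a $C^2$-small body perturbation with a degenerate periodic point, contradicting $\phi\in\FF^2$), then extend the resulting dominated splitting by continuity to the closure $\Lambda(\phi)$; your angle estimate and the bounds on $\|Df^m_\phi|_{E}\|$, $\|Df^{-m}_\phi|_{F}\|$ over the closure make explicit what the paper simply calls ``continuity of the splitting.'' The one spot you leave as a black box is the step you call a ``Mañé-type linear-algebra perturbation of the derivative cocycle'': the fact that a quantitative degeneration of hyperbolicity along long periodic orbits can be converted, by an arbitrarily small perturbation that stays inside $\Sp(\Rr^{2d})$, into an actual eigenvalue of modulus one. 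The paper handles exactly this by invoking Contreras~\cite[Theorem~8.1]{contreras-am-2010}. It sets up the family $\xi = (\xi^x)_{x\in P}$ of periodic sequences of bounded symplectic matrices representing $Df_\phi$ along orbits in $P:=\Per_{m_d}(\phi)$, uses Franks' lemma plus $\phi\in\FF^2$ to show the family is \emph{stably} hyperbolic, and then applies Contreras's theorem (a symplectic version of Mañé's dichotomy for periodic linear systems) to conclude uniform hyperbolicity. This is a genuinely non-trivial symplectic linear-algebra fact, so it should be cited (or proved) rather than asserted; once that reference is supplied, your writeup and the paper's proof coincide.
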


\begin{proof}
Since $\phi\in\FF^2$, the periodic points are hyperbolic, i.e. 
$$
P:=\Per_{m_d}(\phi)\subset H(\phi).
$$
We will apply the general result on the hyperbolicity of families of periodic sequences of bounded symplectic linear maps given in~\cite[Theorem 8.1]{contreras-am-2010}.
It is a symplectic version of the Ma\~n\'e dichotomy on uniform dominated splitting versus trivial spectrum.

For $x\in P$ and $n\in\Zz$ choose an orthonormal symplectic basis $B(x,n)$ of the tangent space $T_{f_\phi^n(x)}M$.
Thus the tangent map 
$$
Df_\phi(f_\phi^n(x))\colon T_{f_\phi^n(x)}M \to T_{f_\phi^{n+1}(x)}M
$$
is represented in the basis $B(x,n)$ by a matrix denoted by $\xi^x_n\in\Sp(\Rr^{2d})$, a symplectic linear map on $\Rr^{2d}$.
Consider the family $\xi=(\xi^x)_{x\in P}$ of sequences $\xi^x=(\xi^x_n)_{n\in\Zz}$.

For each periodic point $x$ with period $m$ we have periodicity of $\xi$ in the sense that
$$
\xi^x_{n+m}=\xi^x_{n},
\quad
n\in\Zz,
$$
because they represent the matrices of $Df_\phi(f^{n+m}(x))=Df_\phi(f^n(x))$ as before.
In addition, we have hyperbolicity of $\xi$, i.e.
$$
\xi^x_{m-1}\dots\xi^x_0
$$
is hyperbolic as this product of matrices corresponds to $Df_\phi^m(x)$ which is hyperbolic.
Notice that all maps $\|\xi^x_n\|$ are uniformly bounded as the manifold is compact.

Suppose that we can perturb $Df_\phi^m(x)$ within $\Sp(T_xM)$ to a non-hyperbolic map $\Pi$ (there is an eigenvalue of modulus 1). 
Use Franks' lemma Theorem~\ref{Franks} to find $\widetilde\phi$ which is $C^2$-close to $\phi$ such that $Df_{\widetilde\phi}^m(x)=\Pi$. This means that $\phi\not\in\FF^2$ since $x$ is a non-hyperbolic period point for $\widetilde\phi$ with period $\geq m_d$. 

So, $\xi$ is a stably hyperbolic family of periodic sequences of bounded symplectic linear maps.
By ~\cite[Theorem 8.1]{contreras-am-2010}, $\xi$ is uniformly hyperbolic.
Therefore, on $P$ the tangent map $Df_\phi$ has an invariant spliting as in \eqref{uh} and $P$ is hyperbolic.
%$f_\phi$ is uniformly hyperbolic on $P$. 
By the continuity of the splitting, this extends to the closure $\Lambda$ of $P$.
\end{proof}

\begin{remark}
There is a known alternative approach to prove Theorem~\ref{thm: lambda uf} following the strategies used in several contexts, cf.~\cite{BRT0,BDP2003,SX2006,HT2006,BDT2021}. 
The main tool is the use of a symplectic version of~\cite[Corollary~2.18]{BGV} as in ~\cite{BCF2017}. 
It can be used to show the following dichotomy: there is either an uniform dominated splitting on the set of periodic orbits having sufficiently large periods, or else a perturbation of the tangent map with trivial spectrum.
Our version of the Franks' lemma (Theorem~\ref{Franks}) then allows us to realize any small perturbation of the tangent map by a $C^2$-perturbation of the body having a degenerate periodic point. However that is not allowed in $\FF^2$.
Therefore, $\Per_{m_d}(\phi)$ has a uniform dominated splitting, which in the symplectic case means that $\Per_{m_d}(\phi)$ is in fact partially hyperbolic~\cite[Theorem 11]{BV2204}. 
Restrict the tangent map to the central subspace of the splitting.
Using a Jordan normal form for symplectic matrices \cite{LK,Gutt}, again by the above dichotomy one gets a partially hyperbolic splitting.
So, the dimension of the stable and unstable subspaces of the original tangent map on $\Per_{m_d}(\phi)$ increases. Repeating this procedure leads us to the conclusion that $\Per_{m_d}(\phi)$ is in fact hyperbolic, and the same holds for $\Lambda$.
\end{remark}

In view of the previous theorem and the Spectral Decomposition Theorem~\cite[pp.385]{Robinson}, the set $\Lambda$ is a union of finitely many pairwise disjoint basic hyperbolic sets. Since $\Lambda$ is infinite by Theorem~\ref{thm: infinito}, at least one of those basic sets must be nontrivial.

%%%%%%%%%%%%%%%%%%%%%%%%%%%%%%%%%%%%%%
\section*{Acknowledgements}

The authors were partially funded by
the project ``New trends in Lyapunov exponents'' PTDC/MAT-PUR/29126/2017.
MB was also partially funded by the projects ``Means and Extremes in Dynamical Systems'' PTDC/MAT-PUR/4048/2021 and UIDB/00144/2020,
JLD and JPG by the project CEMAPRE - UID/MULTI/00491/2019 and
MJT by the projects UIDB/00013/2020 (DOI: 10.54499/UIDB/00013/2020) and UIDP/00013/2020 (DOI: 10.54499/UIDP/00013/2020).
All these projects were financed by Funda\c c\~ao para a Ci\^encia e a Tecnologia, Portugal.
GDM acknowledges the MIUR Excellence Department Project awarded to the Department of Mathematics, University of Pisa, CUP I57G22000700001 and the PRIN Project 2022NTKXCX ``Stochastic properties of dynamical systems'', funded by the Ministry of University and Scientific Research of Italy.

%%%%%%%%%%%%%%%%%%%%%%%%%%%%%%%%%%%%%%%%%%%%%%%%%%%%%%%%%%%%%%%%%%%%%%%%%%%%

\bibliographystyle{abbrv}%{acm}%{plain}
%\addcontentsline{toc}{section}{References}
\bibliography{rfrncs}

\end{document}